\documentclass[12pt]{amsart}  

\usepackage[latin1]{inputenc}
\usepackage{amsmath} 
\usepackage{enumerate}
\usepackage{amsfonts}
\usepackage{amssymb}
\usepackage{stmaryrd}
\usepackage{latexsym} 
\usepackage{graphicx}
\usepackage{subfigure}
\usepackage{color}
\usepackage{hyperref}
\usepackage{verbatim}
\usepackage[all]{xy}
\usepackage{graphics}
\usepackage{pdfsync}
\usepackage[normalem]{ulem}
\usepackage{tikz}
\usepackage{url}
\usepackage{young}
\usepackage{youngtab}

\oddsidemargin=0in
\evensidemargin=0in
\textwidth=6.50in             

\headheight=10pt
\headsep=10pt
\topmargin=.5in
\textheight=8in

\theoremstyle{plain}
\newtheorem{theorem}{Theorem}[section]
\newtheorem{proposition}[theorem]{Proposition}

\newtheorem{lemma}[theorem]{Lemma}
\newtheorem{corollary}[theorem]{Corollary}
\newtheorem{conjecture}[theorem]{Conjecture}

\newtheorem{example}[theorem]{Example}

\theoremstyle{remark}
\newtheorem{remark}[theorem]{Remark}

\numberwithin{equation}{section}
 
\setcounter{MaxMatrixCols}{20}

\definecolor{vividviolet}{rgb}{0.62, 0.0, 1.0}

\newcommand{\fS}{{\mathfrak S}}
\newcommand{\cS}{{\mathcal S}}
\newcommand{\cE}{{\mathcal E}}
\newcommand{\cP}{{\mathcal P}}
\newcommand{\cT}{{\mathcal T}}
\newcommand{\cA}{{\mathcal A}}
\newcommand{\cB}{{\mathcal B}}

\newcommand{\svw}{\textcolor{red}}


\newcommand{\Q}{\mathbb{Q}}

\newcommand{\Z}{\mathbb{Z}}



\newlength\cellsize \setlength\cellsize{15\unitlength}
\savebox2{%
\begin{picture}(15,15)
\put(0,0){\line(1,0){15}}
\put(0,0){\line(0,1){15}}
\put(15,0){\line(0,1){15}}
\put(0,15){\line(1,0){15}}
\end{picture}}
\newcommand\cellify[1]{\def\thearg{#1}\def\nothing{}%
\ifx\thearg\nothing
\vrule width0pt height\cellsize depth0pt\else
\hbox to 0pt{\usebox2\hss}\fi%
\vbox to 15\unitlength{
\vss
\hbox to 15\unitlength{\hss$#1$\hss}
\vss}}
\newcommand\tableau[1]{\vtop{\let\\=\cr
\setlength\baselineskip{-16000pt}
\setlength\lineskiplimit{16000pt}
\setlength\lineskip{0pt}
\halign{&\cellify{##}\cr#1\crcr}}}
\savebox3{%
\begin{picture}(15,15)
\put(0,0){\line(1,0){15}}
\put(0,0){\line(0,1){15}}
\put(15,0){\line(0,1){15}}
\put(0,15){\line(1,0){15}}
\end{picture}}
\newcommand\expath[1]{%
\hbox to 0pt{\usebox3\hss}%
\vbox to 15\unitlength{
\vss
\hbox to 15\unitlength{\hss$#1$\hss}
\vss}}
\newcommand\bas[1]{\omit \vbox to \cellsize{ \vss \hbox to \cellsize{\hss$#1$\hss} \vss}}


\begin{document}

\title[Chromatic Posets]{Chromatic Posets}

\author{Samantha Dahlberg}
\address{
School of Mathematical and Statistical Sciences,
Arizona State University,
Tempe, AZ 85287, USA}
\email{sdahlber@asu.edu}

\author{Adrian She}
\address{
Department of Computer Science,
University of Toronto,
Toronto, ON M5S 2E4, Canada}
\email{ashe@cs.toronto.edu}

\author{Stephanie van Willigenburg}
\address{
 Department of Mathematics,
 University of British Columbia,
 Vancouver, BC V6T 1Z2, Canada}
\email{steph@math.ubc.ca}

\thanks{All authors were supported  in part by the National Sciences and Engineering Research Council of Canada.}
\subjclass[2010]{Primary 05E05; Secondary 05C15, 05C25, 06A06}
\keywords{Chromatic symmetric function,  elementary symmetric function, Schur function, positivity}

\begin{abstract}
In 1995 Stanley introduced the chromatic symmetric function $X_G$ of a graph $G$, whose $e$-positivity and Schur-positivity has been of large interest. In this paper we study the relative $e$-positivity and Schur-positivity between connected graphs on $n$ vertices. We define and investigate two families of posets on distinct chromatic symmetric functions. The relations depend on the $e$-positivity or Schur-positivity of a weighed subtraction between $X_G$ and $X_H$. We find a biconditional condition between $e$-positivity or Schur-positivity and the relation to the complete graph. This gives a new paradigm for  $e$-positivity and for  Schur-positivity. We show many other interesting properties of these posets including that trees  form an independent set and are maximal elements. Additionally, we find that stars are independent elements,  the independence number increases as we increase  in the poset and that the family of lollipop graphs form a chain. 
\end{abstract}

\maketitle
\tableofcontents

\section{Introduction}

The chromatic symmetric function defined by Stanley~\cite{Stan95} is a generalization  of the chromatic polynomial by Birkhoff~\cite{B12} that has received a lot of attention lately. Many properties of the chromatic polynomial are generalized by the chromatic symmetric function including number of acyclic orientations~\cite[Theorem 3.3]{Stan95}, but not the property of deletion-contraction. The study of these symmetric functions has taken many directions. One direction studies which graphs are distinguished by their chromatic symmetric function or not~\cite{Orellana}. Though all trees have the same chromatic polynomial Stanley~\cite[p 170]{Stan95}  conjectures that non-isomorphic trees are distinguished, which has been studied but not fully resolved~\cite{Jose2+1, Jose2, MMW, Orellana}. 
Due to its connections with representation theory and algebraic geometry another direction  has revolved around the ability to write a chromatic symmetric function as a non-negative linear combination of elementary symmetric functions or Schur symmetric functions,  properties called $e$-positivity~\cite{ChoHuh,  DFvW, DvW, Gash, GebSag, GP,    HP, Wolfe} and Schur-positivity~\cite{Gasharov, SW,  SWW} respectively. A conjecture by Stanley and Stembridge connected to  immanants of  Jacobi-Trudi matrices~\cite{StanStem} has brought attention to a particular family of graphs, unit interval graphs. In this paper we particularly consider the family of lollipop graphs, which are unit interval graphs and encapsulate the families of path and  complete graphs. Lollipop graphs  have been proven to be $e$-positive~\cite{GebSag} and have descriptive formulas~\cite{DvW, Wolfe}. They are important in the study of random walks~\cite{BW, Feige, Jonasson}. 

In this paper we consider ${\mathcal G}_n$ the set of equivalence classes of connected graph on $n$ vertices determined by distinct chromatic symmetric functions. We form two posets on ${\mathcal G}_n$ with cover relations determined by the relative $e$-positivity and Schur-positivity between distinct chromatic symmetric functions. We find that this poset has many interesting properties including that the complete graph is a minimal element of the poset and that trees are maximal elements. We find that a graph is $e$-positive or Schur-positive, respectively depending on the poset considered, if and only that graph is weakly greater than the complete graph. The paper is organized as follows. In Section~\ref{sec:background} we introduce chromatic symmetric functions and our  posets of interest. In particular, we show that differences of chromatic symmetric functions are neither $e$-positive nor Schur-positive in Theorems~\ref{thm:elem_X_G-X_H} and \ref{thm:Schur_X_G-X_H}, respectively. Also we prove a biconditional condition between  $e$-positivity or Schur-positivity and a property of our poset, that is its relation to the complete graph in Theorem~\ref{thm:G>K_n} and Theorem~\ref{thm:G>K_n_schur}. Interestingly we find that a  relation in one poset does not   imply a   relation in the another in general  in Proposition~\ref{prop:eisnots}. Section~\ref{sec:elem}  discusses the poset related to the elementary basis and there we prove that the independence number increases as elements increase in the poset, trees form an anti-chain, trees are maximal elements, stars are independent elements,  the poset is not a lattice and  lollipops form a chain. In Section~\ref{sec:schur} we prove analogous results for the poset related to the Schur basis, however, the proof of lollipops being a chain is more intricate. It is instead presented in Section~\ref{sec:schur_lollipop} with the analogous theorem given in Theorem~\ref{the:lollipop}. 

\section{Background}
\label{sec:background}

In this section we cover a lot of the background material needed in the rest of the paper including graphs, symmetric functions, posets and formulas for chromatic symmetric functions with sources provided for more details. After this material we will define our objects of interest,  two families of posets that investigate the relative $e$-positivity or Schur-positivity between graphs as well as the motivation behind the relations we define. We also prove a  biconditional condition between $e$-positivity or Schur-positivity and a property of the poset. 

A {\it graph} is a collection of vertices $V(G)$ and edges $E(G)$ between pairs of vertices. 
All throughout this paper when referring to a graph $G$ we will be referring to simple graphs without multi-edges or loops. If not specified otherwise $G$ is  assumed to be a \emph{connected} graph. There are a few families of graphs that we particularly refer to in this paper.  The {\it complete graph}, $K_n$ for $n\geq 1$, will have $n$ vertices and all possible edges between all pairs of vertices. The {\it path graph}, $P_n$ for $n\geq 1$, will be a graph on $n$ vertices labeled by $[n]=\{1,2,\ldots, n\}$ with edges between labels $i$ and $i+1$. Note that $P_1=K_1$. The {\it cycle graph}, $C_n$  for $n\geq 3$, will be the path graph with the additional edge between $1$ and $n$.  The {\it star graph}, $S_n$  for $n\geq 4$, will have $n$ vertices labeled by $[n]$ with edges between  $n$ and $j$ for all $j\in [n-1]$. The {\it lollipop graph}, $L_{m,n}$ for $m\geq1, n\geq 0$, will be a graph on $m+n=N$ vertices labeled with $[N]$ and will have a complete graph on $[m]$ and edges between $i$ and  $i+1$ for $i\in[m,N-1]=\{m,m+1,\ldots, N-1\}$. 
See Figures~\ref{fig:graphs} and~\ref{fig:poset}  for examples.  

\begin{figure}
\begin{tikzpicture}
\coordinate (A) at (0,0);
\coordinate (B) at (1,0);
\coordinate (C) at (1,1);
\coordinate (D) at (0,1);
\draw[black] (A)--(B)--(C)--(D)--(A)--(C);
\draw[black] (B)--(D);
\filldraw[black] (A) circle [radius=2pt] node[left] {$1$};
\filldraw[black] (B) circle [radius=2pt] node[right] {$2$};
\filldraw[black] (C) circle [radius=2pt] node[right] {$3$};
\filldraw[black] (D) circle [radius=2pt] node[left] {$4$};
\begin{scope}[shift={(3,.5)}]
\coordinate (A) at (0,0);
\coordinate (B) at (1,0);
\coordinate (C) at (2,0);
\coordinate (D) at (3,0);
\draw[black] (A)--(B)--(C)--(D);
\filldraw[black] (A) circle [radius=2pt] node[below] {$1$};
\filldraw[black] (B) circle [radius=2pt] node[below] {$2$};
\filldraw[black] (C) circle [radius=2pt] node[below] {$3$};
\filldraw[black] (D) circle [radius=2pt] node[below] {$4$};
\end{scope}
\begin{scope}[shift={(8,0)}]
\coordinate (A) at (0,0);
\coordinate (B) at (1,0);
\coordinate (C) at (1,1);
\coordinate (D) at (0,1);
\draw[black] (A)--(B)--(C)--(D)--(A);
\filldraw[black] (A) circle [radius=2pt] node[left] {$1$};
\filldraw[black] (B) circle [radius=2pt] node[right] {$2$};
\filldraw[black] (C) circle [radius=2pt] node[right] {$3$};
\filldraw[black] (D) circle [radius=2pt] node[left] {$4$};
\end{scope}
\begin{scope}[shift={(11,.5)}]
\coordinate (A) at (0,.5);
\coordinate (B) at (0,-.5);
\coordinate (C) at (2,0);
\coordinate (D) at (1,0);
\draw[black] (C)--(D)--(A);
\draw[black] (D)--(B);
\filldraw[black] (A) circle [radius=2pt] node[left] {$1$};
\filldraw[black] (B) circle [radius=2pt] node[left] {$2$};
\filldraw[black] (C) circle [radius=2pt] node[below] {$3$};
\filldraw[black] (D) circle [radius=2pt] node[below] {$4$};
\end{scope}
\end{tikzpicture}
\caption{From left to right we have $K_4$, $P_4$, $C_4$ and $S_4$.}
\label{fig:graphs}
\end{figure}

When restricting $G$ to a subset of vertices $W\subseteq V(G)$ we are referring to the graph on vertices $W$ with all edges that $G$ has between vertices in $W$. The {\it independence number} of a graph $G$, $\alpha(G)$,  is the maximal size of a subset $W\subseteq V(G)$ where $G$ restricted to $W$ has no edges. The {\it clique number} of a graph $G$, $\omega(G)$,  is the maximal size of a subset $W\subseteq V(G)$ where $G$ restricted to $W$ is a complete graph. When restricting $G$ to a subset of edges $F\subseteq E(G)$ we are referring to the graph $G$, but with the smaller edge set $F$. 

In order to define the chromatic symmetric function we will need to define proper coloring. 
A {\it proper coloring} of a graph $G$ is a map from the vertices $V(G)=\{v_1,v_2,\ldots, v_n\}$ to colors $\Z^+=\{1,2,3,\ldots\}$,
$$\kappa:V(G)\rightarrow \Z^+,$$
so that if $\epsilon\in E(G)$ is an edge between vertices $u , v \in V(G)$ then $\kappa(u)\neq \kappa(v)$. 
The {\it chromatic symmetric function} is 
$$X_G=\sum_{\kappa}x_{\kappa(v_1)}x_{\kappa(v_2)}\cdots x_{\kappa(v_n)}$$
summed over  all proper colorings $\kappa$ of $G$. 
The chromatic symmetric function generalizes the {\it chromatic polynomial} $\chi_G(k)$ that counts the number of proper colorings possible for $G$ using at most $k$ colors. Stanley~\cite[Proposition 2.2]{Stan95} showed that $X_G$ is indeed a generalization of $\chi_G(k)$ because 
$$X_G(1^k)=\chi_G(k)$$
where $X_G(1^k)$ means that we substitute in $1$ for any $k$ distinct variables and zero for the others. The chromatic polynomial generalizes the {\it chromatic number}, $\chi(G)$, the minimum number of colors required for a proper coloring.

The chromatic polynomial satisfies a very useful   deletion-contraction property, though $X_G$ does not.  Given a graph $G$ and  an edge $\epsilon\in E(G)$ the {\it deletion} of  $\epsilon$, $G-\epsilon$, is the graph $G$ with edge $\epsilon$ removed. The {\it contraction} along $\epsilon$, $G/\epsilon$, is the graph $G$ but with the two vertices on $\epsilon$ merged with any multi-edges formed merged into a single edge and loops removed.  The {\it deletion-contraction property} is that for any graph $G$ and edge $\epsilon\in E(G)$ we have that
\begin{equation}
\chi_G(k)=\chi_{G-\epsilon}(k)-\chi_{G/\epsilon}(k).
\label{eq:deletion-contraction}
\end{equation}

The chromatic symmetric functions exist inside the algebra of symmetric functions, which is a subalgebra of $\Q[[x_1,x_2,\ldots]]$ in commuting variables  where all  bases are indexed by integer partitions. An {\it integer partition},
$\lambda=(\lambda_1,\lambda_2,\ldots, \lambda_{l(\lambda)})$, is a list of weakly decreasing positive integers $\lambda_i$ called {\it parts} whose {\it length}, $l(\lambda)$, is the number of parts. If the sum of all $\lambda_i$ is $n$ we say that $\lambda$ partitions $n$, denoted by $\lambda\vdash n$.  At times we will write  $\lambda=(1^{r_1},2^{r_2},\ldots, n^{r_n})$ where $r_i$ means that $\lambda$ has $r_i$ parts of size $i$. The {\it  transpose} of $\lambda$, denoted by $\lambda^t$, is given by $\lambda^t =(r_1 +\cdots+r_n,r_2 +\cdots+r_n,\ldots,r_n)$ with zeros removed. Several change of bases formulas require the notion of dominance order. Given $\lambda, \nu\vdash n$ we say $\lambda$  {\it dominates}  $\nu$, $\lambda\succeq\nu$, if the sum of the first $j$ largest parts of $\lambda$ is always at least the sum of the first $j$ largest parts of $\nu$. 
\begin{example}\label{ex:dom}
We have that $(4,2,2)\succeq (3,3,2)$ but $(4,2,2)\not\succeq (4,3,1)$. 
\end{example}

To define the symmetric functions we define the {\it $i$-th elementary symmetric function}, which is
$$e_i=\sum_{j_1<j_2<\cdots<j_i}x_{j_1}x_{j_2}\cdots x_{j_i}$$
and the {\it elementary symmetric function} associated to $\lambda$ is 
$$e_{\lambda}=e_{\lambda_1}e_{\lambda_2}\cdots e_{\lambda_{l(\lambda)}}.$$
\begin{example}\label{ex:e}
$e_{(2,1)}= (x_1x_2+x_1x_3+x_2x_3+\cdots)(x_1+x_2+x_3+\cdots) $
\end{example}
The {\it algebra of symmetric functions}, $\Lambda$, is  the graded algebra
$$\Lambda = \Lambda_0\oplus\Lambda_1\oplus\cdots$$
where $\Lambda_0=\text{span}\{1=e_0\}=\Q$ and for $n\geq 1$
$$\Lambda_n =\text{span}\{e_{\lambda} :\lambda\vdash n\}.$$
Besides the basis of elementary symmetric functions, another classical basis that is of particular interest to us is the Schur symmetric functions. The {\it Schur symmetric function} associated to $\lambda$ can be defined using a Jacobi-Trudi identity
$$s_{\lambda} =\text{det}(e_{\lambda^t_i-i+j})_{1\leq i,j\leq \lambda_1}$$
letting $e_0=1$ and $e_i=0$ for $i<0$. 
More information can be found in Macdonald's book~\cite{M79} and Sagan's book~\cite{S01}.
\begin{example}\label{ex:s}
$s_{(2,1)}= e_{(2,1)}-e_{(3)}={x_1^2x_2+x_1x_2^2+\cdots} + {2x_1x_2x_3+\cdots} $
\end{example}

We say a function $F\in \Lambda$ is {\it $e$-positive}, respectively {\it Schur-positive}, if $F$ can be written as a non-negative sum of elementary symmetric functions, respectively Schur symmetric functions. We will often refer to a graph itself as $e$-positive or Schur-positive if its chromatic symmetric function is respectively $e$-positive or Schur-positive.
\begin{example}
The chromatic symmetric function for the complete graph is 
$$X_{K_n}=n!e_{(n)}=n!s_{(1^n)},$$
so is both $e$-positive and Schur-positive. 
\label{ex:complete_graph}
\end{example}
\begin{remark}
The paths~\cite[Proposition 5.3]{Stan95}, cycles~\cite[Proposition 5.4]{Stan95} and lollipops~\cite[Corollary 7.7]{GebSag} are well-known families of $e$-positive graphs. 
\label{re:e-pos_graphs}
\end{remark}

Stanley~\cite[Proposition 2.3]{Stan95} found that if $G\cup H$ is the disjoint union of two graphs $G$ and $H$ then 
$$X_{G\cup H}=X_GX_H.$$
\begin{remark}
Because the elementary basis is multiplicative if graphs $G$ and $H$ are $e$-positive then so is $G\cup H$. 
\label{re:disjoint}
\end{remark}

There are two other bases of symmetric functions that will be especially useful. These two bases are called the power-sum basis and the monomial basis. The {\it $i$-th power-sum symmetric function} is 
$$p_i=x_1^i+x_2^i+x_3^i+\cdots$$
and the {\it power-sum symmetric function} associated to $\lambda$ is 
$$p_{\lambda}=p_{\lambda_1}p_{\lambda_2}\cdots p_{\lambda_{l(\lambda)}}.$$
\begin{example}\label{ex:p}
$p_{(2,1)}=(x_1^2+x_2^2+x_3^2+\cdots)(x_1+x_2+x_3+\cdots )$
\end{example}
The {\it monomial symmetric function} associated to $\lambda$ is 
$$m_{\lambda}=\sum_{j_1,j_2,\svw{\ldots},j_{l(\lambda)}}x_{j_1}^{\lambda_1}x_{j_2}^{\lambda_2}\cdots x_{j_{l(\lambda)}}^{\lambda_{l(\lambda)}}$$
summed over distinct monomials. 
\begin{example}\label{ex:m}
$m_{(2,1)}=x_1^2x_2+x_1x_2^2+x_1^2x_3+x_1x_3^2+x_2^2x_3+x_2x_3^2+\cdots$
\end{example}

We will call a function {\it $m$-positive} if it can be written as a non-negative sum of monomial symmetric functions.

\begin{remark}
It is well known  that  all elementary   and Schur symmetric functions can be written as non-negative sums of monomial symmetric functions. Hence, if a symmetric function $F$ is $e$-positive or Schur-positive, then $F$ is $m$-positive. As result, if $F$ is not $m$-positive, then $F$ is not $e$-positive and not Schur-positive. For details see~\cite{M79, S01}.
\label{remark:mpos}
\end{remark}

Given any basis ${\mathcal B}=\{b_{\lambda}:\lambda\vdash n, n\geq 0\}$ of $\Lambda$ we define for $F\in \Lambda$ the notation $[b_{\lambda}]F$  to be the coefficient of $b_{\lambda}$ when $F$ is fully expanded in basis ${\mathcal B}$. At times we will even talk about coefficients of polynomials $p(k)$ in variable $k$, so let $[k^j]p(k)$ be the coefficient of  $k^j$ in $p(k)$. 

Stanley has several useful formulas for $X_G$ in terms of the power-sum and monomial bases. 
A {\it partition} of the vertices $V(G)$ is a collection of disjoint non-empty subsets of vertices, called {\it blocks}, whose full union is $V(G)$.  We say the partition is of {\it type} $\lambda\vdash\# V(G)$ if the relative sizes of the blocks form $\lambda$. A {\it stable partition} is a partition of the vertices so that each block is an {\it independent set} of $G$, meaning that $G$ restricted to each block has no edges. A {\it connected partition} is a partition of the vertices so that $G$ restricted to each block is connected. For an integer partition $\lambda=(1^{r_1},2^{r_2},\ldots, n^{r_n})$ define $\lambda^!=r_1!r_2!\cdots r_n!$. 
\begin{theorem}[Stanley~\cite{Stan95} Proposition 2.4]
For a graph $G$, 
$$X_G=\sum_{\lambda\vdash n}a_{\lambda}\lambda^!m_{\lambda}$$
where $a_{\lambda}$ is the number of stable partitions of $G$ of type $\lambda$. 
\label{thm:X_Gmonomial}
\end{theorem}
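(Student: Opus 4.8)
The plan is to work directly from the definition $X_G=\sum_{\kappa}x_{\kappa(v_1)}\cdots x_{\kappa(v_n)}$ and track, monomial by monomial, which proper colorings contribute. First I would observe that a proper coloring $\kappa$ of $G$ carries exactly the same data as an \emph{ordered} list of pairwise disjoint independent sets: the nonempty color classes $\kappa^{-1}(1),\kappa^{-1}(2),\ldots$ partition $V(G)$, each class is independent precisely because $\kappa$ is proper, and the monomial contributed by $\kappa$ is $\prod_{j\ge 1}x_j^{|\kappa^{-1}(j)|}$, which depends only on the sequence of class sizes. In particular, permuting the colors $1,2,3,\ldots$ permutes the monomials occurring in $X_G$ among those sharing a fixed multiset of exponents, so $X_G$ is symmetric and we may write $X_G=\sum_{\lambda\vdash n}c_\lambda m_\lambda$, where $c_\lambda$ equals the coefficient in $X_G$ of any single monomial whose exponent multiset is $\lambda$.

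Next I would pick the representative monomial $x_1^{\lambda_1}x_2^{\lambda_2}\cdots x_{l(\lambda)}^{\lambda_{l(\lambda)}}$ and count the proper colorings $\kappa$ producing it. Such a $\kappa$ is exactly a sequence $(A_1,\dots,A_{l(\lambda)})$ of pairwise disjoint independent sets with $A_1\cup\cdots\cup A_{l(\lambda)}=V(G)$ and $|A_i|=\lambda_i$ for each $i$; equivalently, it is a stable partition of $G$ of type $\lambda$ together with an ordering of its blocks in which the $i$th block has size $\lambda_i$. So I would set up this correspondence between contributing colorings and pairs (stable partition of type $\lambda$, admissible block-ordering), and then count the admissible orderings attached to a fixed stable partition.

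The combinatorial heart is that count. Writing $\lambda=(1^{r_1},2^{r_2},\dots,n^{r_n})$, a stable partition of type $\lambda$ has exactly $r_i$ blocks of size $i$, while the target size sequence $(\lambda_1,\dots,\lambda_{l(\lambda)})$ has exactly $r_i$ coordinates equal to $i$; hence the number of orderings of the blocks realizing this size sequence is $\prod_i r_i!=\lambda^!$, independent of which stable partition we chose. Therefore the number of proper colorings yielding the representative monomial is $a_\lambda\cdot\lambda^!$, so $c_\lambda=a_\lambda\lambda^!$ and the theorem follows. I expect the only point needing genuine care is keeping the bookkeeping of ``ordered tuple of independent sets'' versus ``(unordered) stable partition'' straight, so that the factor $\lambda^!$ is neither double-counted nor omitted; everything else is essentially definitional.
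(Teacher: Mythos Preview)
Your proof is correct and is essentially the standard argument for this result. Note, however, that the paper does not actually give its own proof of this theorem: it is stated as background and attributed to Stanley~\cite{Stan95}, Proposition~2.4, with no proof supplied. Your argument is the natural one (and the one Stanley gives): identify proper colorings with ordered sequences of independent sets, read off the monomial coefficient from a fixed representative, and count that each unordered stable partition of type $\lambda$ contributes $\lambda^{!}=r_1!r_2!\cdots r_n!$ ordered versions.
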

\begin{example}\label{ex:P3_in_m}
$X_{P_3}=m_{(2,1)}+6m_{(1,1,1)}$
\end{example}
\begin{theorem}[Stanley~\cite{Stan95} Theorem 2.5]For a graph $G$
$$X_G=\sum_{S\subseteq E(G)}(-1)^{\#S}p_{\lambda(S)} $$
where $\lambda(S)$ is the integer partition formed from the sizes of the connected components formed by restricting the graph $G$ to  $S$. 
\label{thm:X_Gpowersum}
\end{theorem}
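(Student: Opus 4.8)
The plan is to run the standard inclusion--exclusion argument, organized around the monochromatic edges of an arbitrary colouring. For any map $\kappa:V(G)\to\Z^+$ (not required to be proper) I write $x^\kappa=x_{\kappa(v_1)}x_{\kappa(v_2)}\cdots x_{\kappa(v_n)}$ and set $S_\kappa=\{\epsilon\in E(G):\epsilon\text{ has both endpoints of the same colour under }\kappa\}$, so that $\kappa$ is proper exactly when $S_\kappa=\emptyset$. The whole argument rests on a single computation: for each fixed $S\subseteq E(G)$,
$$\sum_{\kappa\,:\,S\subseteq S_\kappa} x^\kappa = p_{\lambda(S)}.$$

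To establish this I would restrict $G$ to the edge set $S$, obtaining a spanning subgraph whose connected components have vertex sets whose sizes, sorted into weakly decreasing order, are exactly the parts of $\lambda(S)$. A colouring $\kappa$ satisfies $S\subseteq S_\kappa$ if and only if $\kappa$ is constant on each of these components, and such colourings are obtained by independently assigning one colour to each component; hence a component on $k$ vertices contributes the factor $\sum_{i\geq 1}x_i^k=p_k$, and multiplying over all components gives $p_{\lambda(S)}$.

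Then I would sum over $S$ with signs and interchange the order of summation:
$$\sum_{S\subseteq E(G)}(-1)^{\#S}p_{\lambda(S)} = \sum_{S\subseteq E(G)}(-1)^{\#S}\sum_{\kappa\,:\,S\subseteq S_\kappa} x^\kappa = \sum_{\kappa} x^\kappa\sum_{S\subseteq S_\kappa}(-1)^{\#S}.$$
The inner sum equals $(1-1)^{\#S_\kappa}$, which is $1$ when $S_\kappa=\emptyset$ and $0$ otherwise, so only proper colourings survive and the right-hand side collapses to $\sum_{\kappa\text{ proper}}x^\kappa=X_G$. The only step that will need a word of justification is the interchange of the two sums, which takes place in $\Q[[x_1,x_2,\ldots]]$; this is harmless because $E(G)$ is finite and, after extracting the coefficient of any fixed monomial $x_1^{a_1}x_2^{a_2}\cdots$, only finitely many $\kappa$ and $S$ contribute (equivalently, one may run the argument in the variables $x_1,\dots,x_N$ for each $N$ and then let $N\to\infty$). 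Beyond this there is no real obstacle --- the substantive content is entirely in the product formula $\sum_{\kappa:S\subseteq S_\kappa}x^\kappa=p_{\lambda(S)}$.
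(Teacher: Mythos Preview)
Your argument is correct: it is the standard inclusion--exclusion proof, and every step is justified. The key identity $\sum_{\kappa:S\subseteq S_\kappa}x^\kappa=p_{\lambda(S)}$ is exactly right, and the collapse of the inner alternating sum to $[S_\kappa=\emptyset]$ via $(1-1)^{\#S_\kappa}$ is clean.

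There is nothing to compare against here: the paper does not supply its own proof of this statement but simply cites it as Theorem~2.5 of Stanley~\cite{Stan95}. Your proof is essentially Stanley's original argument, so you have reproduced precisely what the paper is invoking.
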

\begin{example}\label{ex:P3_in_p}
$X_{P_3}=p_{(3)}-2p_{(2,1)}+p_{(1,1,1)}$ 
\end{example}

The goal of this paper is to investigate two posets on ${\mathcal G}_n$, equivalence classes of connected graphs on $n$ vertices decided by equivalent chromatic symmetric functions. The relations will reflect relative $e$-positivity or Schur-positivity of the chromatic symmetric functions of graphs. A {\it poset}, or partially ordered set, is a collection of objects and a relation $\leq$ between some of these objects that is reflexive, transitive and antisymmetric. For more information see~\cite{stanley1}. There are several elements or sets of elements that are studied in posets because of their particular properties. One group of elements are {\it maximal elements}, which are those $x\in P$ where $y\leq x$ for all comparable $y\in P$. Similarly, {\it minimal elements} are those $x\in P$ where $y\geq x$ for all comparable $y\in P$. A {\it chain}  in a poset is a collection of elements $Q\subseteq P$ such that all elements $x,y\in Q$ are related with $x\leq y$ or $y\leq x$. An {\it antichain} is a collection of elements $Q\subseteq P$ such that all distinct $x,y\in Q$ are unrelated with $x\not\leq y$ and $y\not\leq x$. We call $x\in P$ {\it independent} if $x$ is not related to any other element in $P$. An {\it interval} of a poset is $[x,y]=\{z\in P:x\leq z\leq y\}$. Every poset has an associated {\it M\"{o}bius function}, which is a map $\mu:P\times P\rightarrow \Z$ such that $\mu(x,x)=1$, {for $x<y$}
$$\sum_{z\in [x,y]}\mu(x,z)=0,$${and for incomparable $x$ and $y$ let $\mu(x,y)=0$.}

One natural way to define relative $e$-positivity or Schur-positivity of the chromatic symmetric functions is by considering the $e$-positivity or Schur-positivity  of the difference between two chromatic symmetric functions, $X_G-X_H$. However, we will find that $X_G-X_H$ is never $e$-positive or Schur-positive unless $X_G=X_H$. To show this we need the following lemma that will use  the following conversion formula going from the Schur basis to the power-sum basis~\cite[Theorem 4.6.4]{S01}, 
\begin{equation}
s_{\lambda}=\frac{1}{n!}\sum_{w\in\fS_n}\chi^{\lambda}(w)p_{w}
\label{eq:schur_powersum}
\end{equation}
where $\fS_n$ is the symmetric group,  $\chi^{\lambda}(w)$ is the irreducible character of $\lambda$ evaluated at $w$ and if $w$ is of cycle type $\lambda\vdash n$ then $p_{w}=p_{\lambda}$. 

\begin{lemma} If $F\in\Lambda^n$ is a non-zero Schur-positive function  then $[p_{(1^n)}]F>0$.
\label{lem:conversion_note}
\end{lemma}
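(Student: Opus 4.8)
The plan is to use the conversion formula \eqref{eq:schur_powersum} to extract the coefficient of $p_{(1^n)}$ from a single Schur function $s_\lambda$, and then sum up with non-negative weights. First I would observe that in \eqref{eq:schur_powersum} the term $p_{(1^n)}$ arises exactly from those $w \in \fS_n$ of cycle type $(1^n)$, that is, from $w$ equal to the identity permutation $e$. Since the identity is the unique permutation of cycle type $(1^n)$, we get
$$[p_{(1^n)}]s_\lambda = \frac{1}{n!}\chi^{\lambda}(e) = \frac{f^\lambda}{n!},$$
where $f^\lambda = \chi^\lambda(e)$ is the dimension of the irreducible representation indexed by $\lambda$, which is a positive integer (for instance, it counts standard Young tableaux of shape $\lambda$). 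In particular $[p_{(1^n)}]s_\lambda > 0$ for every $\lambda \vdash n$.

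Next, since $F \in \Lambda^n$ is Schur-positive, write $F = \sum_{\lambda \vdash n} c_\lambda s_\lambda$ with all $c_\lambda \geq 0$. Because $F$ is non-zero, at least one $c_\mu$ is strictly positive. Taking the coefficient of $p_{(1^n)}$ is linear, so
$$[p_{(1^n)}]F = \sum_{\lambda \vdash n} c_\lambda\, [p_{(1^n)}]s_\lambda = \frac{1}{n!}\sum_{\lambda \vdash n} c_\lambda f^\lambda.$$
Every summand is non-negative, and the summand for $\lambda = \mu$ is $c_\mu f^\mu > 0$, so the whole sum is strictly positive, giving $[p_{(1^n)}]F > 0$ as claimed.

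There is essentially no obstacle here: the only point requiring care is the justification that the identity is the only permutation contributing $p_{(1^n)}$ and that $\chi^\lambda(e) = f^\lambda > 0$, both of which are standard facts of $\fS_n$ representation theory (and $f^\lambda \geq 1$ because the trivial and sign representations aside, every $f^\lambda$ counts a nonempty set of standard Young tableaux). One could alternatively avoid representation theory entirely by using the transition matrix between the Schur and power-sum bases, or by noting that $[p_{(1^n)}]$ of any symmetric function $g$ homogeneous of degree $n$ equals $\tfrac{1}{n!}$ times the coefficient of $x_1 x_2 \cdots x_n$ in $g$, which for $g = s_\lambda$ counts standard tableaux; but invoking \eqref{eq:schur_powersum} directly is the cleanest route given what has been set up.
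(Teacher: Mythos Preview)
Your proof is correct and follows essentially the same approach as the paper: both use equation~\eqref{eq:schur_powersum}, observe that only the identity permutation contributes to $p_{(1^n)}$, invoke the fact that $\chi^\lambda(\mathrm{id})$ is the (positive) dimension of the irreducible, and then pass to an arbitrary Schur-positive $F$ by linearity. Your write-up is slightly more explicit in carrying out the last step, but the argument is the same.
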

\begin{proof}
Note that in the conversion formula from Schur basis to the power-sum basis in equation~\eqref{eq:schur_powersum}  the only 
 time $p_{(1^n)}$ appears is for the identity permutation, $\pi=\text{id}$. It is a fact that for any irreducible 
character $\chi^{\lambda}(\text{id})>0$ because $\chi^{\lambda}(\text{id})$ is the dimension of the character~\cite[Proposition 1.8.5]{S01}. This means that $[p_{(1^n)}]s_{\lambda}>0$ for all $\lambda\vdash n$. Further if we are given a non-zero Schur-positive function $F\in\Lambda^n$ then $[p_{(1^n)}]F>0$. It follows that if $F\in\Lambda^n$ is Schur-positive and $[p_{(1^n)}]F=0$ then $F=0$. 
\end{proof}

 \begin{theorem}
For all graphs $G$ and $H$ on $n$ vertices we have either $X_G-X_H=0$ or $X_G-X_H$ is not Schur-positive.
\label{thm:Schur_X_G-X_H}
 \end{theorem}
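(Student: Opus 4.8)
The plan is to exploit Lemma~\ref{lem:conversion_note} together with the fact that the coefficient of $p_{(1^n)}$ in any chromatic symmetric function $X_G$ of a graph on $n$ vertices is completely independent of $G$. Concretely, by Theorem~\ref{thm:X_Gpowersum} we have
\[
X_G=\sum_{S\subseteq E(G)}(-1)^{\#S}p_{\lambda(S)},
\]
and the partition $\lambda(S)$ equals $(1^n)$ precisely when restricting $G$ to the edge set $S$ leaves no vertex joined to another, i.e.\ when $S=\varnothing$. Hence $[p_{(1^n)}]X_G=1$ for every graph $G$ on $n$ vertices. (Alternatively, one can observe $[p_{(1^n)}]X_G = [m_{(1^n)}]X_G / n! \cdot$ something, or simply note that the leading term in the expansion corresponds to the empty edge subset; the point is that the value is the same constant for all $G$ on $n$ vertices.)

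First I would record this observation as the key computation: for any graph $G$ on $n$ vertices, $[p_{(1^n)}]X_G = 1$. Then, given two graphs $G$ and $H$ on $n$ vertices, linearity of the coefficient functional gives
\[
[p_{(1^n)}](X_G-X_H) = [p_{(1^n)}]X_G - [p_{(1^n)}]X_H = 1-1 = 0.
\]
Now apply the contrapositive form of Lemma~\ref{lem:conversion_note}: if $F=X_G-X_H \in \Lambda^n$ is Schur-positive and $[p_{(1^n)}]F=0$, then $F=0$. Therefore either $X_G-X_H=0$ or $X_G-X_H$ is not Schur-positive, which is exactly the claim.

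I do not expect any serious obstacle here; the only mild subtlety is making sure the ambient degree is right so that Lemma~\ref{lem:conversion_note} applies — but $X_G$ and $X_H$ are both homogeneous of degree $n$, so $X_G - X_H \in \Lambda^n$, and $p_{(1^n)}$ is the relevant basis element in that degree. One should also note the degenerate possibility that $X_G - X_H = 0$ is genuinely allowed (it happens exactly when $G$ and $H$ lie in the same class of $\mathcal{G}_n$), which is why the statement is phrased as a dichotomy rather than an outright non-positivity assertion. The analogous statement for $e$-positivity (Theorem~\ref{thm:elem_X_G-X_H}) would presumably follow the same way, using that $e$-positivity implies Schur-positivity (or directly that $[p_{(1^n)}]e_\lambda > 0$ for all $\lambda \vdash n$, since $[p_{(1^n)}]e_n = \pm 1/n!$ type constants are all nonzero and of the correct sign), so the one computation $[p_{(1^n)}]X_G = 1$ does all the work for both theorems.
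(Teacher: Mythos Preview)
Your proof is correct and follows essentially the same route as the paper: both use Theorem~\ref{thm:X_Gpowersum} to show $[p_{(1^n)}]X_G=1$ for every $n$-vertex graph (since only $S=\varnothing$ yields $\lambda(S)=(1^n)$), deduce $[p_{(1^n)}](X_G-X_H)=0$, and then invoke Lemma~\ref{lem:conversion_note} to conclude. Your additional remarks on homogeneity and on deriving Theorem~\ref{thm:elem_X_G-X_H} via the implication $e$-positive $\Rightarrow$ Schur-positive also match the paper exactly.
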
  
 \begin{proof}
Using the formula in Theorem~\ref{thm:X_Gpowersum} we can see that for a  graph $G$ the only way to get a term $p_{(1^n)}$ is to disconnect every vertex. This is only possible when using $S=\emptyset$, the empty edge subset. This shows that $[p_{(1^n)}]X_G=1$ for all graphs $G$. Further this implies that for two graphs $G$ and $H$ on $n$ vertices that $[p_{(1^n)}](X_G-X_H)=0$. By Lemma~\ref{lem:conversion_note} we can then see that if $F\in\Lambda^n$ is Schur-positive and $[p_{(1^n)}]F=0$ then $F=0$.  So, if $X_G-X_H$ is a Schur-positive function then we can conclude that $X_G-X_H=0$. Thus,  $X_G-X_H$ is either not Schur-positive or $X_G-X_H=0$. 
\end{proof}

We can similarly get the same result for $e$-positivity. 

 \begin{theorem}
For all graphs $G$ and $H$ on $n$ vertices we have either $X_G-X_H$ is 0 or not $e$-positive.
\label{thm:elem_X_G-X_H}
 \end{theorem}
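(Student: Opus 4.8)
The plan is to mirror the proof of Theorem~\ref{thm:Schur_X_G-X_H}, replacing the Schur-to-power-sum conversion with the corresponding fact about the elementary basis: namely that every $e$-positive function $F\in\Lambda^n$ has strictly positive coefficient of $p_{(1^n)}$ in its power-sum expansion, unless $F=0$. Granting that, the argument is immediate. From Theorem~\ref{thm:X_Gpowersum} the only edge subset $S$ producing the term $p_{(1^n)}$ is $S=\emptyset$, so $[p_{(1^n)}]X_G=1$ for every graph $G$ on $n$ vertices, whence $[p_{(1^n)}](X_G-X_H)=0$. If $X_G-X_H$ were $e$-positive, this forces $X_G-X_H=0$, so otherwise it is not $e$-positive.

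So the one thing I would actually need to establish is the elementary analogue of Lemma~\ref{lem:conversion_note}: \emph{if $F\in\Lambda^n$ is $e$-positive and $[p_{(1^n)}]F=0$, then $F=0$.} First I would reduce this to showing $[p_{(1^n)}]e_\lambda>0$ for every $\lambda\vdash n$, since then a non-negative combination $F=\sum_\lambda c_\lambda e_\lambda$ with $[p_{(1^n)}]F=\sum_\lambda c_\lambda [p_{(1^n)}]e_\lambda=0$ and all summands $\geq 0$ forces every $c_\lambda=0$. To compute $[p_{(1^n)}]e_\lambda$ I would use the standard expansion $e_k=\sum_{\mu\vdash k}\varepsilon_\mu z_\mu^{-1} p_\mu$ where $z_\mu=\prod_i i^{r_i} r_i!$ and $\varepsilon_\mu=(-1)^{k-\ell(\mu)}$; the coefficient of $p_{(1^k)}$ in $e_k$ is thus $1/k!>0$. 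Since the power-sum basis is multiplicative and $e_\lambda=e_{\lambda_1}\cdots e_{\lambda_{\ell}}$, the coefficient of $p_{(1^n)}$ in $e_\lambda$ is the product $\prod_i \frac{1}{\lambda_i!}>0$. (Equivalently, one can note $[p_{(1^n)}]F$ is, up to the positive factor $n!$, the value $F(1,1,\dots,1)/\text{(number of variables)}^{?}$-type specialization, but the multiplicative computation is cleanest.)

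There is really no serious obstacle here; the only point requiring a line of care is the multiplicativity step — making sure that extracting $[p_{(1^n)}]$ from a product $e_{\lambda_1}\cdots e_{\lambda_\ell}$ of homogeneous pieces of degrees $\lambda_1,\dots,\lambda_\ell$ indeed only picks up the product of the $[p_{(1^{\lambda_i})}]e_{\lambda_i}$ terms, which holds because $p_{(1^n)}=p_{(1^{\lambda_1})}\cdots p_{(1^{\lambda_\ell})}$ is the unique way to write $p_{(1^n)}$ as a product of power-sums of the prescribed degrees. I would state the elementary analogue as a short lemma (parallel to Lemma~\ref{lem:conversion_note}), prove it in two or three lines as above, and then give the half-line deduction of Theorem~\ref{thm:elem_X_G-X_H} exactly as in the Schur case. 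Alternatively, and even more briefly, one could invoke Remark~\ref{remark:mpos}: it suffices to show $X_G-X_H$ is not $m$-positive when nonzero, and $[m_{(1^n)}]X_G=n!$ for every $G$ by Theorem~\ref{thm:X_Gmonomial} (the only stable partition of type $(1^n)$ is the partition into singletons, and $\lambda^!=n!$), so $[m_{(1^n)}](X_G-X_H)=0$ while a nonzero $m$-positive function of degree $n$ — being a nonnegative combination of the $m_\lambda$, each of which has $[m_{(1^n)}]m_\lambda\ge 1$... actually $m_{(1^n)}$ appears with coefficient $1$ only in itself — hmm, this needs the same positivity input, so I would keep the power-sum route as the clean one and perhaps remark on the monomial route only in passing.
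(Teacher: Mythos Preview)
Your argument is correct, but the paper's proof is a one-line reduction rather than a parallel development: since every $e$-positive symmetric function is Schur-positive, if $X_G-X_H$ were $e$-positive it would in particular be Schur-positive, and Theorem~\ref{thm:Schur_X_G-X_H} then forces $X_G-X_H=0$. That is the entire proof.

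By contrast, you redo the power-sum computation from scratch for the elementary basis, proving an $e$-analogue of Lemma~\ref{lem:conversion_note} via $[p_{(1^n)}]e_\lambda=\prod_i 1/\lambda_i!>0$. This is perfectly valid and self-contained, and the multiplicativity step you flag is fine (the only way to obtain $p_{(1^n)}$ as a product of power-sums of degrees $\lambda_1,\dots,\lambda_\ell$ is $\prod_i p_{(1^{\lambda_i})}$). But it duplicates work: your new lemma is in fact a consequence of Lemma~\ref{lem:conversion_note} itself, again because $e$-positive implies Schur-positive. The paper's route buys brevity and avoids introducing a second lemma; your route buys independence from the Schur case, which is not needed here since Theorem~\ref{thm:Schur_X_G-X_H} has already been established. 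Your monomial-basis aside is indeed a dead end as stated, since $[m_{(1^n)}]m_\lambda=0$ for $\lambda\neq(1^n)$, so $m$-positivity of a function with $[m_{(1^n)}]=0$ does not by itself force the function to vanish.
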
  
\begin{proof}
This follows  from Theorem~\ref{thm:Schur_X_G-X_H}: If $X_G-X_H$ is $e$-positive, then since all $e$-positive functions are Schur-positive,
 we then know that $X_G-X_H$ is Schur-positive, which implies that $X_G-X_H=0$. 
\end{proof}

Because direct subtraction between two distinct chromatic symmetric functions is never $e$-positive or Schur-positive, defining the relation between $G$ and $H$ based on direct subtraction  gives a poset with no relations. Hence, we base our relations in our posets on the following  weighted subtractions. The goal of these subtractions is to zero-out the $e_{(n)}$ or $s_{(1^n)}$ term, which is never zero in a chromatic symmetric function. This is a  fact that will be evident later from theorems presented further on in this section. 
Define 
$$X_e(G,H)=X_G-\frac{[e_{(n)}]X_G}{[e_{(n)}]X_H}X_H$$
and
$$X_s(G,H)=X_G-\frac{[s_{(1^n)}]X_G}{[s_{(1^n)}]X_H}X_H.$$
Define  $\cE_n$ to be the poset on ${\mathcal G}_n$ related to the elementary basis. We will say $G\geq_e H$ if and only if $X_e(G,H)$ is $e$-positive. Similarly,  $\cS_n$ will be the poset on ${\mathcal G}_n$ related to the Schur basis. We will say $G\geq_s H$ if and only if $X_s(G,H)$ is Schur-positive. See Figure~\ref{fig:posets} for examples. 

Because our relations depend on weighted subtractions determined by the coefficients of $e_{(n)}$ and $s_{(1^n)}$ we will need some machinery to determine these coefficients. The following propositions gives a way to calculate these coefficients via their chromatic polynomial and an interpretation in terms of acyclic orientations. 
 An {\it orientation} of a graph $G$ is an  assignment for each edge between $u$ and $v$ a direction from  $u$ to $v$ or from $v$ to $u$. We call an orientation {\it acyclic} if there are no directed cycles. Given an orientation on $G$ we call a vertex $v$ a {\it sink} if all edges incident to $v$ are directed towards $v$ and a {\it source} if all edges incident to $v$ are directed away from $v$. Let $S(G,j)$ count the number of acyclic orientations of $G$ with exactly $j$ sinks. 
 Stanley~\cite[Theorem 3.3]{Stan95} found that for any connected graph $G$ on $n$ vertices with $[e_{\lambda}]X_G=c_{\lambda}$ that
\begin{equation}
\underset{ l(\lambda)=j}{\sum_{\lambda\vdash n}}c_{\lambda}=S(G,j),
\label{eq:elem_acyclic}
\end{equation}
which is a refinement of a result in~\cite{Stan73} that the total number of acyclic orientations is
\begin{equation}
\sum_{j=0}^nS(G,j)=(-1)^{n}\chi_G(-1).
\label{eq:total_acyclic}
\end{equation}

\begin{theorem}[Greene and Zaslavsky~\cite{GZ83} Theorem 7.3]
Given a graph $G$ on $n$ vertices and $v\in V(G)$ the number of acyclic orientations with $v$ as a unique sink is $(-1)^{n-1} [k]\chi_G(k)$. Also, 
$$S(G,1)=(-1)^{n-1}n\cdot [k]\chi_G(k)$$
and unless $G$ has no edges $S(G,1)>0$.
\label{thm:S(G,1)}
\end{theorem}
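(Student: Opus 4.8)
The plan is to prove first the sharper \emph{pointed} statement: for a connected graph $G$ on $n$ vertices and any $v\in V(G)$, the number $a(G,v)$ of acyclic orientations of $G$ having $v$ as their unique sink equals $(-1)^{n-1}[k]\chi_G(k)$. Everything else then follows easily. Since every acyclic orientation of a connected graph has at least one sink (the terminal vertex of a maximal directed path), an acyclic orientation is counted in $\sum_{v\in V(G)}a(G,v)$ precisely when it has exactly one sink, and then exactly once; as the pointed identity makes $a(G,v)$ independent of $v$, this gives $S(G,1)=\sum_{v}a(G,v)=n\cdot(-1)^{n-1}[k]\chi_G(k)$. For the strict positivity when $G$ has an edge, order $V(G)=\{v=v_1,\dots,v_n\}$ so that each $v_i$ with $i\geq 2$ is adjacent to some $v_j$ with $j<i$ (possible since $G$ is connected) and orient every edge from its higher-indexed to its lower-indexed endpoint; this orientation is acyclic and has $v_1$ as its only sink, so $a(G,v)\geq 1$ and hence $S(G,1)\geq n>0$.

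I would prove the pointed identity by induction on $|E(G)|$, base case $G=K_1$, where $a(G,v)=1=(-1)^{0}[k]\,k$. For the inductive step, pick an edge $e=\{v,w\}$ incident to $v$; one exists because $G$ is connected with $n\geq 2$. On the chromatic side, the deletion--contraction property~\eqref{eq:deletion-contraction} gives $[k]\chi_G(k)=[k]\chi_{G-e}(k)-[k]\chi_{G/e}(k)$, and since $G/e$ has $n-1$ vertices this rearranges to $(-1)^{n-1}[k]\chi_G(k)=(-1)^{n-1}[k]\chi_{G-e}(k)+(-1)^{n-2}[k]\chi_{G/e}(k)$. The combinatorial counterpart I aim to establish is $a(G,v)=a(G-e,v)+a(G/e,\bar v)$, where $\bar v$ is the vertex of $G/e$ obtained by merging $v$ and $w$. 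In any orientation counted by $a(G,v)$ the edge $e$ is directed $w\to v$, so deleting $e$ yields an acyclic orientation $O'$ of $G-e$ whose sink set is either $\{v\}$ or $\{v,w\}$. The orientations with sink set $\{v\}$ biject with those counted by $a(G-e,v)$, the inverse map re-adding $e$ as $w\to v$ (which creates no directed cycle, since $v$ has no out-edges in $O'$). The orientations with sink set $\{v,w\}$ biject with those counted by $a(G/e,\bar v)$: identifying $v$ and $w$ gives a well-defined acyclic orientation of $G/e$ with $\bar v$ as unique sink — any two edges of $G$ that become identified by the contraction are oriented the same way, since both $v$ and $w$ are sinks of $O'$ — and one recovers $G$ by lifting and reinserting $e$ as $w\to v$.

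To finish I split on whether $e$ is a bridge. If it is not, then $G-e$ is connected on $n$ vertices and $G/e$ is connected on $n-1$ vertices, both with strictly fewer edges, so the inductive hypothesis together with the two displayed recurrences yields $a(G,v)=(-1)^{n-1}[k]\chi_G(k)$. If $e$ is a bridge, then $a(G-e,v)=0$, because the component of $G-e$ containing $w$ is a nonempty connected graph and therefore contributes a sink different from $v$ in every acyclic orientation of $G-e$; at the same time $\chi_{G-e}=\chi_{G_1}\chi_{G_2}$ is divisible by $k^2$, so $[k]\chi_{G-e}(k)=0$, and both recurrences degenerate consistently to the contracted term, handled by induction. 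This closes the argument.

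The step I expect to be the main obstacle is the combinatorial recurrence $a(G,v)=a(G-e,v)+a(G/e,\bar v)$: one must check carefully that collapsing a pair of sinks produces a genuinely acyclic orientation with the correct unique sink, that the parallel edges created by the contraction are coherently oriented so that the induced orientation of $G/e$ is well defined, that the two reinsertion maps are mutually inverse, and that the degenerate cases — $v$ or $w$ becoming isolated, and $e$ a bridge — are accounted for without double counting. By contrast, the chromatic-polynomial bookkeeping is routine, although one must attend to the sign flip caused by the drop from $n$ to $n-1$ vertices under contraction and note that the $v$-independence of $a(G,v)$ is exactly what turns the sum over $v$ into the factor $n$.
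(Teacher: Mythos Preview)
The paper does not prove this theorem at all; it is quoted as a known result of Greene and Zaslavsky and used as a black box (see the line ``\cite{GZ83} Theorem 7.3''). So there is no paper proof to compare against. Your argument is correct and self-contained: the choice of the edge $e$ incident to the designated sink $v$ makes the combinatorial recurrence $a(G,v)=a(G-e,v)+a(G/e,\bar v)$ clean, the case analysis on sinks of $O'$ is right, the parallel-edge coherence after contraction is exactly handled by the observation that both $v$ and $w$ are sinks, and the bridge case is dealt with properly on both the combinatorial and chromatic sides. This is the standard elementary deletion--contraction route, as opposed to the hyperplane-arrangement argument in the original Greene--Zaslavsky paper.

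One small remark: the theorem as stated in the paper does not literally say $G$ is connected, and your proof (correctly) assumes connectedness throughout. For disconnected $G$ the pointed identity still holds trivially since both $a(G,v)$ and $[k]\chi_G(k)$ vanish, but the final clause ``unless $G$ has no edges $S(G,1)>0$'' is false without connectedness (e.g.\ $K_2\cup K_1$). The paper's standing convention that all graphs are connected resolves this, but it would be worth a sentence making that explicit.
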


Putting together equation~\eqref{eq:elem_acyclic} and Theorem~\ref{thm:S(G,1)} we have
\begin{equation}
[e_{(n)}]X_G=(-1)^{n-1}n\cdot [k]\chi_G(k)=S(G,1).
\label{eq:e_n_coeff}
\end{equation}

\begin{theorem}[Kaliszewski~\cite{Kaliszwecki} Theorem 1.1]
The coefficient of the hook shape $(k,1^{n-k})$ in the Schur basis is
$$[s_{(k,1^{n-k})}]X_G=\sum_{j=1}^n\binom{j-1}{k-1}S(G,j)$$
for $k\in[n]$. 
Specifically $[s_{(1^{n})}]X_G$ counts the total number of acyclic orientations of $G$.
\label{thm:s_hook_coeff}
\end{theorem}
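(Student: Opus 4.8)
The plan is to compute $[s_{(k,1^{n-k})}]X_G$ by passing through the elementary basis, where Stanley's identity~\eqref{eq:elem_acyclic} already packages the relevant data. First I would write $X_G=\sum_{\lambda\vdash n}c_\lambda e_\lambda$ with $c_\lambda=[e_\lambda]X_G$, so that~\eqref{eq:elem_acyclic} reads $\sum_{l(\lambda)=j}c_\lambda=S(G,j)$. Next I would invoke the classical expansion $e_\lambda=\sum_{\mu\vdash n}K_{\mu^t\lambda}\,s_\mu$, where $K_{\mu^t\lambda}$ counts semistandard Young tableaux (SSYT) of shape $\mu^t$ and content $\lambda$ (this follows from $h_\lambda=\sum_\mu K_{\lambda\mu}s_\mu$ on applying the standard involution $\omega$; see~\cite{M79,S01}). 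Since the transpose of the hook $(k,1^{n-k})$ is the hook $(n-k+1,1^{k-1})$, this yields
$$[s_{(k,1^{n-k})}]X_G=\sum_{\lambda\vdash n}c_\lambda\,K_{(n-k+1,\,1^{k-1}),\,\lambda},$$
so the whole theorem reduces to evaluating the hook Kostka numbers $K_{(n-k+1,\,1^{k-1}),\,\lambda}$.

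The heart of the argument, and the step I expect to require the most care, is the claim that $K_{(n-k+1,\,1^{k-1}),\,\lambda}=\binom{l(\lambda)-1}{k-1}$, i.e.\ that this Kostka number depends on $\lambda$ only through its length. I would prove this by a direct bijection. Given an SSYT $T$ of hook shape $(n-k+1,1^{k-1})$ with content $\lambda=(\lambda_1,\dots,\lambda_{l(\lambda)})$, the corner cell $(1,1)$ holds the minimum entry of $T$, and this minimum is $1$ because the content uses each colour $1,\dots,l(\lambda)$ at least once. The first column of $T$ is then a strictly increasing word of length $k$ beginning with $1$, hence is recorded by a $(k-1)$-element subset of $\{2,\dots,l(\lambda)\}$; and once that column is fixed, the remaining $n-k$ entries (the multiset left after deleting the column entries from the content) must fill out the first row in the unique weakly increasing order, with all SSYT inequalities then automatic. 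This gives the stated bijection, hence the binomial count; in particular $K_{(n-k+1,\,1^{k-1}),\,\lambda}=0$ unless $k\le l(\lambda)$, consistent with the binomial.

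Finally I would group partitions of $n$ by length and apply~\eqref{eq:elem_acyclic} once more:
$$[s_{(k,1^{n-k})}]X_G=\sum_{\lambda\vdash n}c_\lambda\binom{l(\lambda)-1}{k-1}=\sum_{j=1}^n\binom{j-1}{k-1}\sum_{l(\lambda)=j}c_\lambda=\sum_{j=1}^n\binom{j-1}{k-1}S(G,j),$$
which is the claimed formula. Specializing $k=1$ collapses $\binom{j-1}{0}=1$, so $[s_{(1^n)}]X_G=\sum_{j=1}^nS(G,j)$, which by~\eqref{eq:total_acyclic} (together with $S(G,0)=0$, since every acyclic orientation has a sink) equals $(-1)^n\chi_G(-1)$, the total number of acyclic orientations of $G$. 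An alternative route would bypass the $e$-basis entirely, expanding $X_G$ in power sums via Theorem~\ref{thm:X_Gpowersum} and using $[s_\mu]F=\langle F,s_\mu\rangle$ with $\langle p_\nu,s_\mu\rangle=\chi^\mu(\nu)$ plus the classical generating function for hook characters; but the elementary-basis approach above is the one that meshes most directly with the tools already developed in this section.
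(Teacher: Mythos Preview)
Your argument is correct. The paper itself does not prove this statement: Theorem~\ref{thm:s_hook_coeff} is quoted as Kaliszewski's result and used as a black box, so there is no in-paper proof to compare against. Your route through the elementary basis is clean and self-contained: the identity $e_\lambda=\sum_\mu K_{\mu^t,\lambda}s_\mu$ reduces everything to the hook Kostka number $K_{(n-k+1,1^{k-1}),\lambda}$, and your bijection showing this equals $\binom{l(\lambda)-1}{k-1}$ is valid (the $(1,1)$ cell is forced to be $1$, the rest of the first column is any $(k-1)$-subset of $\{2,\dots,l(\lambda)\}$, and the remainder of row~1 is then uniquely determined and automatically compatible). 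Grouping by $l(\lambda)$ and invoking~\eqref{eq:elem_acyclic} finishes the job. One small caveat: the paper states~\eqref{eq:elem_acyclic} only for connected $G$, so if you want the theorem in full generality you should either note that Stanley's identity in fact holds for arbitrary $G$, or restrict to the connected case (which is all the paper ever uses).
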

Using equation~\eqref{eq:total_acyclic} we can see from Theorem~\ref{thm:s_hook_coeff} that   
\begin{equation}
[s_{(1^n)}]X_G=(-1)^{n}\chi_G(-1).
\label{eq:s_1^n_coeff}
\end{equation}

Using these methods on trees we get the following formulas for  two coefficients. 

\begin{corollary}
If $T$ is a tree on $n$ vertices then $[e_{(n)}]X_T=n$ and $[s_{(1^n)}]X_T=2^{n-1}$. 
\label{cor:X_T_coeff}
\end{corollary}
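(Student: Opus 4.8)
The plan is to compute the two coefficients directly from the formulas established just above, using the well-known fact that a tree on $n$ vertices has chromatic polynomial $\chi_T(k) = k(k-1)^{n-1}$. This formula itself follows by a trivial induction: pick a leaf, and each time we add a leaf to a graph already colored we multiply the number of colorings by $(k-1)$; the base case $\chi_{K_1}(k)=k$ is immediate. I would state this formula as the starting point (citing it or giving the one-line induction), since everything else is substitution.

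For the first coefficient, I would apply equation~\eqref{eq:e_n_coeff}, namely $[e_{(n)}]X_T = (-1)^{n-1} n \cdot [k]\chi_T(k)$. Expanding $\chi_T(k) = k(k-1)^{n-1}$, the coefficient of $k^1$ comes from taking the constant term of $(k-1)^{n-1}$, which is $(-1)^{n-1}$. Hence $[k]\chi_T(k) = (-1)^{n-1}$, and so $[e_{(n)}]X_T = (-1)^{n-1} n \cdot (-1)^{n-1} = n$, as claimed.

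For the second coefficient, I would apply equation~\eqref{eq:s_1^n_coeff}, namely $[s_{(1^n)}]X_T = (-1)^n \chi_T(-1)$. Substituting $k=-1$ gives $\chi_T(-1) = (-1)(-1-1)^{n-1} = (-1)(-2)^{n-1} = (-1)^n 2^{n-1}$, so $[s_{(1^n)}]X_T = (-1)^n \cdot (-1)^n 2^{n-1} = 2^{n-1}$, again as claimed. Alternatively, one could note that $(-1)^n\chi_T(-1)$ counts all acyclic orientations of $T$ by equation~\eqref{eq:total_acyclic} (or Theorem~\ref{thm:s_hook_coeff}), and a tree has exactly $2^{n-1}$ orientations, all acyclic since a tree has no cycles — this gives an independent check.

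There is essentially no obstacle here: the corollary is a direct consequence of the cited formulas together with the chromatic polynomial of a tree. The only thing to be careful about is bookkeeping the signs $(-1)^{n-1}$ versus $(-1)^n$ correctly, and making sure the reader is reminded that $\chi_T(k) = k(k-1)^{n-1}$ (which the excerpt has not explicitly stated, though it is classical). I would include the short leaf-induction argument for completeness so the proof is self-contained.
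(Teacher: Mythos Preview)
Your proof is correct and follows essentially the same approach as the paper: invoke the chromatic polynomial $\chi_T(k)=k(k-1)^{n-1}$ for a tree and then apply equations~\eqref{eq:e_n_coeff} and~\eqref{eq:s_1^n_coeff}. The paper's version is terser (it omits the explicit sign bookkeeping and the alternative acyclic-orientation argument), but the method is identical.
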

\begin{proof}
It is known that the chromatic polynomial for a tree on $n$ vertices is $\chi_T(k)=k(k-1)^{n-1}$. 
From equation~\eqref{eq:e_n_coeff} we can see that $[e_{(n)}]X_T=n$ and from equation~\eqref{eq:s_1^n_coeff} we can see that $[s_{(1^n)}]X_T=2^{n-1}$. 
\end{proof}
Also
using these methods {we} get the following formulas for the two coefficients for complete graph{s}. 

\begin{proposition}We have 
$[e_{(n)}]X_{K_n}=[s_{(1^n)}]X_{K_n}=n!$.
\label{prop:X_K_n} 
\end{proposition}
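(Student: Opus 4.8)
The plan is to use the two acyclic-orientation formulas already established, namely equation~\eqref{eq:e_n_coeff} for $[e_{(n)}]X_G$ and equation~\eqref{eq:s_1^n_coeff} for $[s_{(1^n)}]X_G$, and evaluate both at $G=K_n$. So the task reduces entirely to understanding the acyclic orientations of $K_n$. First I would observe that an acyclic orientation of the complete graph $K_n$ is the same data as a linear ordering of its $n$ vertices: given a total order, orient each edge from the smaller vertex to the larger one; conversely, an acyclic orientation of $K_n$ has, for every pair of vertices, a directed edge, and acyclicity forces these to be consistent with a unique total order (it is a tournament with no directed cycles, hence transitive). Thus the total number of acyclic orientations of $K_n$ is exactly $n!$, which by \eqref{eq:s_1^n_coeff} (equivalently Theorem~\ref{thm:s_hook_coeff}, since $[s_{(1^n)}]X_G$ counts all acyclic orientations) gives $[s_{(1^n)}]X_{K_n}=n!$.

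For the elementary coefficient, by \eqref{eq:e_n_coeff} we have $[e_{(n)}]X_{K_n}=S(K_n,1)$, the number of acyclic orientations of $K_n$ with exactly one sink. Under the bijection with total orders, the unique sink of the orientation coming from an order is its maximum element, so every acyclic orientation of $K_n$ has exactly one sink; hence $S(K_n,1)=n!$ as well. Alternatively, and perhaps more cleanly for the write-up, I would invoke Example~\ref{ex:complete_graph}, which already records $X_{K_n}=n!\,e_{(n)}=n!\,s_{(1^n)}$; reading off the coefficients of $e_{(n)}$ and $s_{(1^n)}$ directly yields $[e_{(n)}]X_{K_n}=[s_{(1^n)}]X_{K_n}=n!$ immediately.

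There is essentially no obstacle here: the statement is a direct corollary of the already-displayed expansion $X_{K_n}=n!\,e_{(n)}=n!\,s_{(1^n)}$ in Example~\ref{ex:complete_graph}, since $e_{(n)}$ and $s_{(1^n)}$ each appear with coefficient $n!$ and with coefficient $0$ in no other way (they are basis elements). If instead one wants a proof that does not quote that example, the only point requiring a sentence of justification is the transitivity of acyclic tournaments, i.e. that acyclic orientations of $K_n$ biject with permutations of $[n]$, together with the trivial remark that such an orientation has a unique sink, namely the largest vertex in the corresponding order.
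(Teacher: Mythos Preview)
Your proposal is correct, and your ``alternative'' route---simply invoking Example~\ref{ex:complete_graph} and reading off the coefficients---is exactly the paper's one-line proof. The acyclic-orientation argument you lead with is also valid but unnecessary here, as you yourself note.
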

\begin{proof}
This follows immediately from Example~\ref{ex:complete_graph}.
\end{proof}

One motivating property for our posets $\cE_n$ and $\cS_n$ is that they give a new equivalent condition for $e$-positivity and Schur-positivity.

\begin{theorem}
We have $G\geq_e K_n$ if and only if $G$ is $e$-positive.
\label{thm:G>K_n}
\end{theorem}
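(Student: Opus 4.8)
The plan is to unwind the definition of $\geq_e$ and compare it with the definition of $e$-positivity, using the computation of $[e_{(n)}]X_{K_n}$ already in hand. Recall that $G \geq_e K_n$ means precisely that
$$X_e(G,K_n) = X_G - \frac{[e_{(n)}]X_G}{[e_{(n)}]X_{K_n}} X_{K_n}$$
is $e$-positive. By Proposition~\ref{prop:X_K_n} we have $[e_{(n)}]X_{K_n} = n!$, and by Example~\ref{ex:complete_graph} we have $X_{K_n} = n!\,e_{(n)}$, so the subtracted term is exactly $\bigl([e_{(n)}]X_G\bigr) e_{(n)}$. Thus
$$X_e(G,K_n) = X_G - \bigl([e_{(n)}]X_G\bigr) e_{(n)}.$$

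For the forward direction, suppose $G \geq_e K_n$, so $X_e(G,K_n)$ is $e$-positive, say $X_e(G,K_n) = \sum_{\lambda \vdash n} d_\lambda e_\lambda$ with all $d_\lambda \geq 0$. Then $X_G = \bigl([e_{(n)}]X_G\bigr) e_{(n)} + \sum_\lambda d_\lambda e_\lambda$. To conclude $X_G$ is $e$-positive, I need $[e_{(n)}]X_G \geq 0$; this is immediate from equation~\eqref{eq:e_n_coeff}, which gives $[e_{(n)}]X_G = S(G,1) \geq 0$ (indeed $S(G,1) > 0$ when $G$ has an edge, by Theorem~\ref{thm:S(G,1)}). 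Hence $X_G$ is a non-negative combination of $e_\lambda$'s, i.e. $e$-positive. For the reverse direction, suppose $G$ is $e$-positive, $X_G = \sum_\lambda c_\lambda e_\lambda$ with $c_\lambda \geq 0$. Then $c_{(n)} = [e_{(n)}]X_G$, so $X_e(G,K_n) = X_G - c_{(n)} e_{(n)} = \sum_{\lambda \neq (n)} c_\lambda e_\lambda$, which is still a non-negative combination of elementary symmetric functions, hence $e$-positive; thus $G \geq_e K_n$.

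One small point to address for cleanliness: the relation $\geq_e$ is defined on $\mathcal{G}_n$, the set of equivalence classes, so strictly speaking $G \geq_e K_n$ refers to the classes of $G$ and $K_n$; but since $e$-positivity and the coefficient $[e_{(n)}]X_G$ depend only on $X_G$, everything is well-defined on equivalence classes, and the argument is unaffected. There is no real obstacle here — the whole statement is essentially a direct unwinding of the definition once one observes that $X_{K_n}$ is a scalar multiple of $e_{(n)}$ and that $[e_{(n)}]X_G$ is automatically non-negative. The only thing one must be slightly careful about is that the normalization in the definition of $X_e(G,H)$ collapses to subtracting exactly the $e_{(n)}$-component of $X_G$, which is what makes the two directions match up symmetrically.
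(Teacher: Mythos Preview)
Your proof is correct and follows essentially the same approach as the paper: both reduce $X_e(G,K_n)$ to $X_G - [e_{(n)}]X_G\cdot e_{(n)}$ via Proposition~\ref{prop:X_K_n}, then use the non-negativity of $[e_{(n)}]X_G$ (equation~\eqref{eq:e_n_coeff} and Theorem~\ref{thm:S(G,1)}) to conclude that $e$-positivity of $X_G$ is equivalent to $e$-positivity of this difference. The paper compresses the two directions into a single observation about which coefficients can be negative, whereas you spell them out separately, but the content is the same.
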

\begin{proof}
Note that because of Proposition~\ref{prop:X_K_n}
$$X_e(G,K_n)=X_G-\frac{[e_{(n)}]X_G}{[e_{(n)}]X_{K_n}}X_{K_n}=X_G-[e_{(n)}]X_G\cdot e_{(n)},$$
which is $X_G$ without its $e_{(n)}$ term. We can see from Theorem~\ref{thm:S(G,1)} and equation~\eqref{eq:e_n_coeff} that  $[e_{(n)}]X_G>0$. Because of this $X_G$ is not $e$-positive if there is a $\lambda\vdash n$ with $\lambda\neq (n)$ such that $[e_{\lambda}]X_G<0$. Altogether $X_G$ is $e$-positive if and only if $X_e(G,K_n)=X_G-[e_{(n)}]X_G\cdot e_{(n)}$ is $e$-positive. 
\end{proof}
 
 We have a similar condition for our other poset. 
 
 \begin{theorem}
We have $G\geq_s K_n$ if and only if $G$ is Schur-positive.
\label{thm:G>K_n_schur}
\end{theorem}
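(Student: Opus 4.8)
The plan is to mirror the proof of Theorem~\ref{thm:G>K_n} exactly, replacing the elementary basis with the Schur basis and the coefficient $[e_{(n)}]X_G$ with $[s_{(1^n)}]X_G$. First I would use Proposition~\ref{prop:X_K_n}, which gives $[s_{(1^n)}]X_{K_n}=n!$, to compute
$$X_s(G,K_n)=X_G-\frac{[s_{(1^n)}]X_G}{[s_{(1^n)}]X_{K_n}}X_{K_n}=X_G-[s_{(1^n)}]X_G\cdot s_{(1^n)},$$
using Example~\ref{ex:complete_graph} that $X_{K_n}=n!s_{(1^n)}$. So $X_s(G,K_n)$ is simply $X_G$ with its $s_{(1^n)}$ term deleted.

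Next I would observe that the coefficient being removed is strictly positive: by equation~\eqref{eq:s_1^n_coeff}, $[s_{(1^n)}]X_G=(-1)^n\chi_G(-1)$, which equals the total number of acyclic orientations of $G$ by Theorem~\ref{thm:s_hook_coeff} (equivalently equation~\eqref{eq:total_acyclic}), and this is positive for every graph $G$ (any graph admits at least one acyclic orientation). Hence subtracting $[s_{(1^n)}]X_G\cdot s_{(1^n)}$ only decreases the coefficient of $s_{(1^n)}$ and leaves all other Schur coefficients unchanged.

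Finally I would conclude the biconditional. If $X_G$ is Schur-positive then every Schur coefficient is nonnegative, so deleting the (positive) $s_{(1^n)}$ coefficient still leaves a function with all nonnegative Schur coefficients, i.e. $X_s(G,K_n)$ is Schur-positive, giving $G\geq_s K_n$. Conversely, if $G\geq_s K_n$ then $X_s(G,K_n)$ is Schur-positive, meaning $[s_\lambda]X_G\geq 0$ for all $\lambda\neq(1^n)$; since also $[s_{(1^n)}]X_G>0$, every Schur coefficient of $X_G$ is nonnegative, so $X_G$ is Schur-positive. I do not anticipate a genuine obstacle here: the only point requiring a little care is citing the right fact for the positivity of $[s_{(1^n)}]X_G$, which is already packaged in Theorem~\ref{thm:s_hook_coeff} and equation~\eqref{eq:s_1^n_coeff}, so the argument is a direct transcription of the elementary-basis case.
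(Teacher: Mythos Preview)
Your proposal is correct and follows essentially the same route as the paper's own proof: compute $X_s(G,K_n)=X_G-[s_{(1^n)}]X_G\cdot s_{(1^n)}$ via Proposition~\ref{prop:X_K_n}, note that $[s_{(1^n)}]X_G>0$, and conclude the biconditional from the fact that removing this one positive coefficient leaves the other Schur coefficients unchanged. The only cosmetic difference is that the paper cites Theorem~\ref{thm:s_hook_coeff} together with Theorem~\ref{thm:S(G,1)} for the positivity of $[s_{(1^n)}]X_G$, whereas you invoke equation~\eqref{eq:s_1^n_coeff} and the existence of an acyclic orientation; both justifications are equivalent.
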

\begin{proof}This is similar to the proof of Theorem~\ref{thm:G>K_n}. 
Note that because of Proposition~\ref{prop:X_K_n}
$$X_s(G,K_n)=X_G-\frac{[s_{(1^n)}]X_G}{[s_{(1^n)}]X_{K_n}}X_{K_n}=X_G-[s_{(1^n)}]X_G\cdot s_{(1^n)},$$
which is $X_G$ without its $s_{(1^n)}$ term. We can see from Theorem~\ref{thm:s_hook_coeff}  and Theorem~\ref{thm:S(G,1)} that $[s_{(1^n)}]X_G>0$. Because of this $X_G$ is not Schur-positive if there is a $\lambda\vdash n$ with $\lambda\neq (1^n)$ such that $[s_{\lambda}]X_G<0$. Altogether $X_G$ is Schur-positive if and only if $X_s(G,K_n)=X_G-[s_{(1^n)}]X_G\cdot s_{(1^n)}$ is Schur-positive. 
\end{proof}
 
Normally $e$-positivity implies Schur-positivity, but in general a relation in either poset  does not imply a relation in the other.

\begin{figure}
\begin{tikzpicture}
\draw[black] (2.5,1.5)--(1.5,2.5);
\draw[black] (2.5,1.5)--(3,2);
\draw[black] (4,3.1)--(4,4.2);
\draw[black] (2.5,5.5)--(1.5,4.5);
\draw[black] (2.5,5.5)--(3.25,4.75);
\begin{scope}[shift={(-.25,1.5)}]
\coordinate (A) at (0,0);
\coordinate (B) at (1,0);
\coordinate (C) at (-1,-.5);
\coordinate (D) at (-1,.5);
\draw[black] (A)--(B);
\draw[black] (C)--(A)--(D);
\filldraw[black] (A) circle [radius=2pt];
\filldraw[black] (B) circle [radius=2pt];
\filldraw[black] (C) circle [radius=2pt];
\filldraw[black] (D) circle [radius=2pt];
\end{scope}
\begin{scope}[shift={(2,0)}]
\coordinate (A) at (0,1);
\coordinate (B) at (0,0);
\coordinate (C) at (1,0);
\coordinate (D) at (1,1);
\draw[black] (A)--(B)--(C)--(D)--(A)--(C);
\draw[black] (D)--(B);
\filldraw[black] (A) circle [radius=2pt];
\filldraw[black] (B) circle [radius=2pt];
\filldraw[black] (C) circle [radius=2pt];
\filldraw[black] (D) circle [radius=2pt];
\end{scope}
\begin{scope}[shift={(3,1.75)}]
\coordinate (A) at (0,.5);
\coordinate (B) at (1,0);
\coordinate (D) at (1,1);
\coordinate (C) at (2,.5);
\draw[black] (A)--(B)--(C)--(D)--(A);
\draw[black] (B)--(D);
\filldraw[black] (A) circle [radius=2pt];
\filldraw[black] (B) circle [radius=2pt];
\filldraw[black] (C) circle [radius=2pt];
\filldraw[black] (D) circle [radius=2pt];
\end{scope}
\begin{scope}[shift={(1,3)}]
\coordinate (A) at (0,0);
\coordinate (B) at (1,0);
\coordinate (C) at (1,1);
\coordinate (D) at (0,1);
\draw[black] (A)--(B)--(C)--(D)--(A);
\filldraw[black] (A) circle [radius=2pt];
\filldraw[black] (B) circle [radius=2pt];
\filldraw[black] (C) circle [radius=2pt];
\filldraw[black] (D) circle [radius=2pt];
\end{scope}
\begin{scope}[shift={(3,4.5)}]
\coordinate (A) at (2,.5);
\coordinate (B) at (2,-.5);
\coordinate (C) at (0,0);
\coordinate (D) at (1,0);
\draw[black] (C)--(D)--(A)--(B);
\draw[black] (D)--(B);
\filldraw[black] (A) circle [radius=2pt];
\filldraw[black] (B) circle [radius=2pt];
\filldraw[black] (C) circle [radius=2pt];
\filldraw[black] (D) circle [radius=2pt];
\end{scope}
\begin{scope}[shift={(1,6)}]
\coordinate (A) at (0,0);
\coordinate (B) at (1,0);
\coordinate (C) at (2,0);
\coordinate (D) at (3,0);
\draw[black] (A)--(B)--(C)--(D);
\filldraw[black] (A) circle [radius=2pt];
\filldraw[black] (B) circle [radius=2pt];
\filldraw[black] (C) circle [radius=2pt];
\filldraw[black] (D) circle [radius=2pt];
\end{scope}
\end{tikzpicture}
\hspace{1in}
\begin{tikzpicture}
\draw[black] (2.5,1.1)--(2.5,1.6);
\draw[black] (2.5,3)--(1.75,3.75);
\draw[black] (2.5,3)--(3.5,4);
\draw[black] (2.5,5.75)--(2,5.25);
\draw[black] (2.5,5.75)--(3.25,4.75);
\begin{scope}[shift={(-.25,1.5)}]
\coordinate (A) at (0,0);
\coordinate (B) at (1,0);
\coordinate (C) at (-1,-.5);
\coordinate (D) at (-1,.5);
\draw[black] (A)--(B);
\draw[black] (C)--(A)--(D);
\filldraw[black] (A) circle [radius=2pt];
\filldraw[black] (B) circle [radius=2pt];
\filldraw[black] (C) circle [radius=2pt];
\filldraw[black] (D) circle [radius=2pt];
\end{scope}
\begin{scope}[shift={(2,0)}]
\coordinate (A) at (0,1);
\coordinate (B) at (0,0);
\coordinate (C) at (1,0);
\coordinate (D) at (1,1);
\draw[black] (A)--(B)--(C)--(D)--(A)--(C);
\draw[black] (D)--(B);
\filldraw[black] (A) circle [radius=2pt];
\filldraw[black] (B) circle [radius=2pt];
\filldraw[black] (C) circle [radius=2pt];
\filldraw[black] (D) circle [radius=2pt];
\end{scope}
\begin{scope}[shift={(1.5,1.75)}]
\coordinate (A) at (0,.5);
\coordinate (B) at (1,0);
\coordinate (D) at (1,1);
\coordinate (C) at (2,.5);
\draw[black] (A)--(B)--(C)--(D)--(A);
\draw[black] (B)--(D);
\filldraw[black] (A) circle [radius=2pt];
\filldraw[black] (B) circle [radius=2pt];
\filldraw[black] (C) circle [radius=2pt];
\filldraw[black] (D) circle [radius=2pt];
\end{scope}
\begin{scope}[shift={(1,4)}]
\coordinate (A) at (0,0);
\coordinate (B) at (1,0);
\coordinate (C) at (1,1);
\coordinate (D) at (0,1);
\draw[black] (A)--(B)--(C)--(D)--(A);
\filldraw[black] (A) circle [radius=2pt];
\filldraw[black] (B) circle [radius=2pt];
\filldraw[black] (C) circle [radius=2pt];
\filldraw[black] (D) circle [radius=2pt];
\end{scope}
\begin{scope}[shift={(3,4.5)}]
\coordinate (A) at (2,.5);
\coordinate (B) at (2,-.5);
\coordinate (C) at (0,0);
\coordinate (D) at (1,0);
\draw[black] (C)--(D)--(A)--(B);
\draw[black] (D)--(B);
\filldraw[black] (A) circle [radius=2pt];
\filldraw[black] (B) circle [radius=2pt];
\filldraw[black] (C) circle [radius=2pt];
\filldraw[black] (D) circle [radius=2pt];
\end{scope}
\begin{scope}[shift={(1,6)}]
\coordinate (A) at (0,0);
\coordinate (B) at (1,0);
\coordinate (C) at (2,0);
\coordinate (D) at (3,0);
\draw[black] (A)--(B)--(C)--(D);
\filldraw[black] (A) circle [radius=2pt];
\filldraw[black] (B) circle [radius=2pt];
\filldraw[black] (C) circle [radius=2pt];
\filldraw[black] (D) circle [radius=2pt];
\end{scope}
\end{tikzpicture}
\caption{On the left we have $\cE_4$ and on the right we have $\cS_4$.}
\label{fig:posets}
\end{figure}

 \begin{proposition}\label{prop:eisnots}
The relation $G<_eH$ does not necessarily imply $G<_sH$.   The relation $G<_sH$ does not necessarily imply $G<_eH$. 
 \end{proposition}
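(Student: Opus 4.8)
The plan is to exhibit explicit small graphs, most naturally on $n=4$ or $n=5$ vertices, that witness each of the two non-implications, and then verify the relevant positivity or non-positivity of the weighted differences $X_e(G,H)$ and $X_s(G,H)$ by direct computation. Since the posets $\cE_4$ and $\cS_4$ are already drawn in Figure~\ref{fig:posets} and they visibly differ, the first task is simply to read off from those pictures a pair $G,H$ that is comparable in one poset but not the other. Concretely, I would look for a cover (or more generally a comparable pair) present in $\cE_4$ but absent in $\cS_4$ to get the first claim, and a pair present in $\cS_4$ but absent in $\cE_4$ to get the second. From the figures, the natural candidates involve the $4$-cycle $C_4$ together with one of the other non-tree graphs on $4$ vertices (the ``kite''/``bull''-type graphs and $K_4$), since $C_4$ sits in a different relative position in the two Hasse diagrams.

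For the first non-implication ($G<_eH$ does not imply $G<_sH$), I would take a specific pair, say $G$ and $H$ on $4$ vertices with $G<_eH$ read off from $\cE_4$, and then compute $X_s(G,H)=X_G-\frac{[s_{(1^4)}]X_G}{[s_{(1^4)}]X_H}X_H$ and expand it in the Schur basis, using equation~\eqref{eq:s_1^n_coeff} (i.e.\ $[s_{(1^n)}]X_G=(-1)^n\chi_G(-1)$) to get the scaling constant and the Jacobi--Trudi definition together with Theorems~\ref{thm:X_Gmonomial} and~\ref{thm:X_Gpowersum} to get the coefficients. The point will be to show some Schur coefficient of $X_s(G,H)$ is negative, so $G\not<_sH$. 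For the second non-implication ($G<_sH$ does not imply $G<_eH$), I would symmetrically pick a pair comparable in $\cS_4$ but not $\cE_4$, compute $X_e(G,H)=X_G-\frac{[e_{(4)}]X_G}{[e_{(4)}]X_H}X_H$ using equation~\eqref{eq:e_n_coeff} for the scaling constant, expand in the $e$-basis, and exhibit a negative $e$-coefficient. In both directions it may be cleanest to first check non-positivity via the $m$-basis obstruction of Remark~\ref{remark:mpos} and Theorem~\ref{thm:X_Gmonomial}, since an $m$-coefficient computation only requires counting stable partitions.

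The main obstacle is not conceptual but bookkeeping: one must choose the witnessing pairs so that all four chromatic symmetric functions $X_G$, $X_H$ for both pairs have easily computed expansions, and so that the arithmetic of the scaling ratios $\tfrac{[e_{(n)}]X_G}{[e_{(n)}]X_H}$ and $\tfrac{[s_{(1^n)}]X_G}{[s_{(1^n)}]X_H}$ is clean (ideally these ratios are integers or simple fractions, which steers one toward pairs where one of the graphs is a tree, so that Corollary~\ref{cor:X_T_coeff} gives $[e_{(n)}]X_T=n$ and $[s_{(1^n)}]X_T=2^{n-1}$, or toward $K_n$ via Proposition~\ref{prop:X_K_n}). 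There is also a subtlety worth double-checking: the claim is about comparability in each poset, so for the pair witnessing the first non-implication I must confirm both $G<_eH$ \emph{and} $G\not<_sH$ \emph{and} $H\not<_sG$ (the latter is usually automatic since $X_s(H,G)$ will have a negative coefficient too, or can be ruled out by a degree/coefficient-sign argument), and symmetrically for the second pair. I expect the computations to reduce to a handful of $4$-vertex chromatic symmetric functions already implicit in Figure~\ref{fig:posets}, so the proof will consist of stating the chosen pairs, listing the needed $X_G$ expansions, and pointing to the offending negative coefficient in each weighted difference.
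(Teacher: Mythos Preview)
Your overall strategy matches the paper's: exhibit explicit small counterexamples and verify the signs of the relevant coefficients. For the second non-implication ($G<_sH$ need not imply $G<_eH$) your plan works exactly as the paper does it, using the diamond $D$ and $C_4$ on four vertices: $D<_sC_4$ while $D$ and $C_4$ are incomparable in $\cE_4$.

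However, there is a genuine gap in your plan for the first non-implication. You assume you can read off a pair with $G<_eH$ but $G\not<_sH$ from Figure~\ref{fig:posets}, i.e.\ from $n=4$. This is impossible: every relation in $\cE_4$ also holds in $\cS_4$. Concretely, the relations in $\cE_4$ are $K_4<_eD$, $K_4<_eC_4$, $D<_eL_{3,1}$, $C_4<_eP_4$, $L_{3,1}<_eP_4$ (and their transitive consequences), and each of these is also a relation in $\cS_4$ (either a cover or implied by $K_4<_sD<_sC_4$). So no pair on four vertices witnesses $<_e\not\Rightarrow<_s$. The paper in fact has to go up to $n=7$: it takes $G_1$ to be $C_4$ with a pendant $P_3$ attached and $G_2=C_7$, shows $G_1>_eG_2$, and then checks that $[s_{(2^3,1)}]X_s(G_1,G_2)<0$, so $G_1\not>_sG_2$. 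Your proposal would need to be amended to search beyond $n=4$ (and in practice beyond $n=5,6$ as well) and to carry out the larger computation; the heuristic that ``the figures visibly differ, so a witness for each direction must be present'' only yields one of the two directions.
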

\begin{proof}
See Figure~\ref{fig:graph_ex2} for the pictures of the graph in this proof. We can see that the diamond $D$ is less than $C_4$ in $\cS_4$, however $D$ is not less than $C_4$ in $\cE_4$, which can be seen in Figure~\ref{fig:posets}. We also find that for $G_1$ and $G_2$ given in Figure~\ref{fig:graph_ex2} that $G_1>_e G_2$ in $\cE_7$ but $G_1\not>_sG_2$ in $\cS_7$, looking at the coefficient of $s_{(2^3,1)}$. 
\end{proof}

\begin{figure}[h]
\begin{tikzpicture}
\begin{scope}[shift={(0,0)}]
\coordinate (A) at (0,.5);
\coordinate (B) at (1,0);
\coordinate (D) at (1,1);
\coordinate (C) at (2,.5);
\draw[black] (A)--(B)--(C)--(D)--(A);
\draw[black] (B)--(D);
\filldraw[black] (A) circle [radius=2pt];
\filldraw[black] (B) circle [radius=2pt];
\filldraw[black] (C) circle [radius=2pt];
\filldraw[black] (D) circle [radius=2pt];
\end{scope}
\begin{scope}[shift={(3,0)}]
\coordinate (A) at (0,0);
\coordinate (B) at (1,0);
\coordinate (C) at (1,1);
\coordinate (D) at (0,1);
\draw[black] (A)--(B)--(C)--(D)--(A);
\filldraw[black] (A) circle [radius=2pt];
\filldraw[black] (B) circle [radius=2pt];
\filldraw[black] (C) circle [radius=2pt];
\filldraw[black] (D) circle [radius=2pt];
\end{scope}
\begin{scope}[shift={(5,0)}]
\coordinate (A) at (0,0);
\coordinate (B) at (1,0);
\coordinate (C) at (1,1);
\coordinate (D) at (0,1);
\coordinate (E) at (2,.5);
\coordinate (F) at (3,.5);
\coordinate (G) at (4,.5);
\draw[black] (A)--(B)--(C)--(D)--(A);
\draw[black] (C)--(E)--(F)--(G);
\filldraw[black] (A) circle [radius=2pt];
\filldraw[black] (B) circle [radius=2pt];
\filldraw[black] (C) circle [radius=2pt];
\filldraw[black] (D) circle [radius=2pt];
\filldraw[black] (E) circle [radius=2pt];
\filldraw[black] (F) circle [radius=2pt];
\filldraw[black] (G) circle [radius=2pt];
\end{scope}
\begin{scope}[shift={(10.5,0)}]
\coordinate (A) at (0,0);
\coordinate (B) at (1,0);
\coordinate (C) at (2,0);
\coordinate (D) at (2.5,1);
\coordinate (E) at (1.5,1);
\coordinate (F) at (.5,1);
\coordinate (G) at (-.5,1);
\draw[black] (A)--(B)--(C)--(D)--(E)--(F)--(G)--(A);
\filldraw[black] (A) circle [radius=2pt];
\filldraw[black] (B) circle [radius=2pt];
\filldraw[black] (C) circle [radius=2pt];
\filldraw[black] (D) circle [radius=2pt];
\filldraw[black] (E) circle [radius=2pt];
\filldraw[black] (F) circle [radius=2pt];
\filldraw[black] (G) circle [radius=2pt];
\end{scope}
\end{tikzpicture}

\begin{align*}
X_D&=16e_{(4)}+2e_{(3,1)}=18s_{(1^4)}+2s_{(2,1,1)}\\
X_{C_4}&=12e_{(4)}+2e_{(2,2)}=14s_{(1^4)}+2s_{(2,1,1)}+2s_{(2,2)}\\
X_{G_1}&= 21e_{(7)}+15e_{(6,1)}+29e_{(5,2)}+27e_{(4,3)}+12e_{(4,2,1)}+7e_{(3,2^2)}+e_{(2^3,1)}\\
&=112s_{(1^7)} + 112s_{(2,1^5)}+ 106s_{(2^2,1^3)}+ 57s_{(2^3,1)} + 22s_{(3,14)}  + 10s_{(3,2^2)} \\
&\hspace{.2in}+ 32s_{(3,2,1^2)}+ 10s_{(3^2,1)} + s_{(4,1^3)}+ 2s_{(4,3,1)} + s_{(4,3)}\\
X_{G_2}&= 42e_{(7)}+28e_{(5,2)}+42e_{(4,3)}+14e_{(3,2^2)}\\
&=126s_{(1^7)} + 98s_{(2,1^5)}+ 112s_{(2^2,1^3)}+ 70s_{(2^3,1)} + 14s_{(3,1^4)} + 28s_{(3,2,1^2)}\\
&\hspace{.2in}+  14s_{(3,2^2)} +14s_{(3^2,1)}\\
\end{align*}
\vspace{-.4in}

\caption{From left to right we have the diamond $D$, $C_4$,  $G_1$ and $G_2$. }
\label{fig:graph_ex2}
\end{figure}

However, this is true for trees. Furthermore, we will find later that trees form an anti-chain in both families of posets. 

 \begin{proposition}
For two trees if $T_1>_eT_2$ then $T_1>_sT_2$. 
 \end{proposition}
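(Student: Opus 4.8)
The plan is to exploit the fact, established in Corollary~\ref{cor:X_T_coeff}, that for any tree $T$ on $n$ vertices the two relevant coefficients are completely determined: $[e_{(n)}]X_T = n$ and $[s_{(1^n)}]X_T = 2^{n-1}$, independent of the particular tree. This makes the weighted subtractions $X_e(T_1,T_2)$ and $X_s(T_1,T_2)$ \emph{unweighted} in an essential way: the scaling factors $[e_{(n)}]X_{T_1}/[e_{(n)}]X_{T_2}$ and $[s_{(1^n)}]X_{T_1}/[s_{(1^n)}]X_{T_2}$ are both equal to $1$ since $T_1$ and $T_2$ have the same number of vertices. Hence $X_e(T_1,T_2) = X_s(T_1,T_2) = X_{T_1} - X_{T_2}$ — the two weighted differences coincide and equal the ordinary difference.

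Once that reduction is in hand, the statement is immediate: if $T_1 >_e T_2$ then by definition $X_e(T_1,T_2) = X_{T_1} - X_{T_2}$ is $e$-positive and nonzero (nonzero because $T_1$ and $T_2$ are distinct elements of $\mathcal{G}_n$, i.e.\ $X_{T_1} \neq X_{T_2}$). Every $e$-positive symmetric function is Schur-positive (Remark~\ref{remark:mpos} gives the chain $e$-positive $\Rightarrow$ $m$-positive via Schur; more directly, each $e_\lambda$ is Schur-positive). Therefore $X_{T_1} - X_{T_2} = X_s(T_1,T_2)$ is Schur-positive and nonzero, which is exactly the condition $T_1 \geq_s T_2$; combined with $X_{T_1} \neq X_{T_2}$ this gives $T_1 >_s T_2$.

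I would present it as: first invoke Corollary~\ref{cor:X_T_coeff} to get $[e_{(n)}]X_{T_1} = [e_{(n)}]X_{T_2} = n$, so the coefficient ratio in $X_e(T_1,T_2)$ is $1$ and $X_e(T_1,T_2) = X_{T_1} - X_{T_2}$; likewise $[s_{(1^n)}]X_{T_1} = [s_{(1^n)}]X_{T_2} = 2^{n-1}$ gives $X_s(T_1,T_2) = X_{T_1} - X_{T_2}$. Then $X_e(T_1,T_2) = X_s(T_1,T_2)$, and since $e$-positivity implies Schur-positivity, $T_1 >_e T_2$ forces $T_1 >_s T_2$. There is essentially no obstacle here — the content is entirely in recognizing that the normalizing coefficients are tree-independent, after which it is a two-line deduction. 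The only minor point worth stating carefully is strictness: the relations $>_e$ and $>_s$ presuppose $T_1 \neq T_2$ in $\mathcal{G}_n$ (distinct chromatic symmetric functions), so the same hypothesis carries over to the conclusion and $X_{T_1} - X_{T_2}$ being Schur-positive is genuinely a strict relation. It is also worth remarking that the converse direction fails in general (Proposition~\ref{prop:eisnots}), so the tree hypothesis is doing real work.
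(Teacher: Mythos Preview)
Your proof is correct and follows exactly the same approach as the paper: invoke Corollary~\ref{cor:X_T_coeff} to see that both scaling ratios equal $1$, so $X_e(T_1,T_2)=X_s(T_1,T_2)=X_{T_1}-X_{T_2}$, and then use that $e$-positivity implies Schur-positivity. Your additional remarks about strictness and the role of the tree hypothesis are accurate but go slightly beyond what the paper records.
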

 
\begin{proof}
Let $T_1$ and $T_2$ be two trees on $n$ vertices. By Corollary~\ref{cor:X_T_coeff} we have $X_e(T_1,T_2)=X_{T_1}-X_{T_2}$ and $X_s(T_1,T_2)=X_{T_1}-X_{T_2}$, which are equal.  Hence, if $X_{T_1}-X_{T_2}$ is $e$-positive, $T_1>_eT_2$, then $X_{T_1}-X_{T_2}$ is Schur-positive, $T_1>_sT_2$. 
\end{proof}

\section{Properties of the chromatic $e$-positivity poset}
\label{sec:elem}

In this section we show several  properties of $\cE_n$. In Theorem~\ref{thm:G>K_n} we proved that $G\geq_e K_n$ if and only if $G$ is $e$-positive, and now we will prove that $K_n$ is  a minimal element. We will find that  as we increase in $\cE_n$ the  independence number, $\alpha(G)$, increases, the number of acyclic orientations with one sink, $S(G,1)$, decreases, and the chromatic number, $\chi(G)$, decreases. Also, we will show that trees with distinct chromatic symmetric functions form an anti-chain and are maximal elements. In particular, the stars $S_n$ are independent elements, so $\cE_n$ can not be a lattice. Lastly, we show that collection of lollipops form a chain with the complete graph as the minimal element and the path as the maximal element. 

Though it will be shown later in Corollary~\ref{cor:K_n_min_elem} that the complete graph is a minimal element and in Proposition~\ref{prop:trees_maximal_elem} that trees are maximal elements in $\cE_n$, we  consider several common graph statistics  that have the complete graph and  trees at the two extremal bounds of the statistic. We determine  if the statistic increases or decreases with relations in the poset. The statistics we consider here are the  the independence number, $\alpha(G)$, the number of acyclic orientations with one sink, $S(G,1)$, the chromatic number $\chi(G)$, and the clique number, $\omega(G)$.

\begin{proposition}
If $G\geq_e H$ then $\alpha(G)\geq \alpha(H)$. 
\label{prop:alpha}
\end{proposition}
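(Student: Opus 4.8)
The plan is to connect the independence number to a readily-accessible coefficient of the chromatic symmetric function, namely the coefficient of the monomial symmetric function $m_{(1^{\alpha(G)}, \text{rest})}$ of maximal length, and then track how $e$-positivity of the weighted difference $X_e(G,H)$ forces an inequality on these coefficients. Concretely, by Theorem~\ref{thm:X_Gmonomial} the coefficient $[m_\lambda]X_G$ is $a_\lambda \lambda^!$ where $a_\lambda$ counts stable partitions of type $\lambda$; the longest partition $\lambda$ with $a_\lambda \neq 0$ has length exactly $\alpha(G)$, since a stable partition into $\ell$ blocks has a block of size $\geq n/\ell$ and, conversely, a maximum independent set of size $\alpha(G)$ together with singletons gives a stable partition of length $n - \alpha(G) + 1$. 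So I would first record the clean statement: $\alpha(G)$ equals the largest $j$ for which some $\lambda \vdash n$ with $l(\lambda) = n - j + 1$ (equivalently, the partition $(n - j + 1$ spread out appropriately$)$) — better phrased as: $n + 1 - \alpha(G) = \min\{ l(\lambda) : [m_\lambda]X_G \neq 0\}$, or dually $\alpha(G) = \max\{ l(\lambda) : [m_\lambda] X_G \neq 0\} - $ wait, that's not right either; let me instead use that $\alpha(G)$ is the size of the largest independent set, and the stable partition $\{W\} \cup \{\{v\} : v \notin W\}$ where $|W| = \alpha(G)$ has type with longest part $\alpha(G)$. So $\alpha(G) = \max\{ \lambda_1 : [m_\lambda] X_G \neq 0\}$, the largest part appearing. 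That is the characterization I will use.

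Next I translate this into the elementary basis. Since each $e_\mu$ is $m$-positive and, crucially, the monomial expansion of $e_\mu$ only involves $m_\lambda$ with $\lambda \preceq$ (something) — more usefully, $[m_\lambda] e_\mu \geq 0$ always, and $[m_\lambda]e_\mu = 0$ whenever $\lambda_1 > l(\mu)$ (a product of $e$'s of parts summing with $l(\mu)$ factors can put at most $l(\mu)$ distinct "layers", so no monomial with a part exceeding $l(\mu)$ appears) — actually the relevant fact is simply: if $F$ is $e$-positive, say $F = \sum_\mu c_\mu e_\mu$ with $c_\mu \geq 0$, then $\max\{\lambda_1 : [m_\lambda]F \neq 0\} \leq \max\{ l(\mu) : c_\mu \neq 0\}$. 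I would prove this small lemma, or simply invoke the standard fact that $e_\mu = \sum_\lambda M_{\lambda\mu} m_\lambda$ with $M_{\lambda\mu} \geq 0$ and $M_{\lambda\mu} = 0$ unless $\lambda \preceq \mu^t$ in dominance, whence $\lambda_1 \leq (\mu^t)_1 = l(\mu)$. Then since $X_e(G,H) = X_G - c\, X_H$ with $c = [e_{(n)}]X_G / [e_{(n)}]X_H > 0$ is $e$-positive, and $[e_{(n)}]X_G \cdot e_{(n)}$ contributes $m_\lambda$-terms with $\lambda_1 \leq n$ but more importantly the term $c\,X_H$ is being subtracted: I get $X_G = X_e(G,H) + c\, X_H$, a sum of an $e$-positive function and a positive multiple of $X_H$.

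From $X_G = X_e(G,H) + c\,X_H$ with both summands $m$-positive (Remark~\ref{remark:mpos} for the first, Theorem~\ref{thm:X_Gmonomial} for the second), no cancellation occurs in the monomial expansion, so $[m_\lambda]X_G \geq c\,[m_\lambda]X_H$ for every $\lambda$, and in particular $[m_\lambda]X_G \neq 0$ whenever $[m_\lambda]X_H \neq 0$. Taking $\lambda$ to be a type of a maximum stable partition of $H$ — one whose largest part is $\alpha(H)$ — we conclude $[m_\lambda]X_G \neq 0$ with $\lambda_1 = \alpha(H)$, hence $\alpha(G) = \max\{\lambda_1 : [m_\lambda]X_G \neq 0\} \geq \alpha(H)$, which is the claim.

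The main obstacle — and the step I would be most careful about — is the precise bookkeeping in the middle lemma relating the largest part of a monomial index to the structure of the expansion: one must be sure that $e$-positivity of $X_e(G,H)$ genuinely prevents a large part $\alpha(H)$ appearing in $X_G$ from being "cancelled away," and the cleanest route is exactly the additive reformulation $X_G = X_e(G,H) + c X_H$ together with nonnegativity of all monomial coefficients of both terms, so that support only grows. Everything else — the identification of $\alpha(G)$ with the maximal part in the $m$-support of $X_G$, and the positivity of the scalar $c$ (from equation~\eqref{eq:e_n_coeff} and Theorem~\ref{thm:S(G,1)}) — is routine.
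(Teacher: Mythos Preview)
Your argument is correct and is essentially the same as the paper's: both hinge on the partition $\lambda=(\alpha(H),1^{n-\alpha(H)})$, the monomial expansion of $X_G$ via stable partitions (Theorem~\ref{thm:X_Gmonomial}), positivity of the scaling factor, and the fact that $e$-positivity implies $m$-positivity (Remark~\ref{remark:mpos}). The only cosmetic difference is that the paper runs the contrapositive directly---showing $[m_\lambda]X_e(G,H)<0$ when $\alpha(G)<\alpha(H)$---whereas you rearrange to $X_G=X_e(G,H)+cX_H$ and use additivity of $m$-positive functions; once you prune the exploratory false starts (the length-vs-largest-part digression and the unneeded dominance discussion of $e_\mu$), the two proofs are the same.
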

\begin{proof}
Suppose $\alpha(G) < \alpha(H)$ and let $\lambda$ be the partition of $n$ given by $(\alpha(H), 1^{n-\alpha(H)}).$ By definition, $H$ has a  stable partition of type $\lambda$ but $G$ does not. Hence, by the expansion of $X_G$ and $X_H$ into the monomial basis, Theorem~\ref{thm:X_Gmonomial}, we have $[m_\lambda] X_G = 0$ but $[m_\lambda] X_H \geq 1$ since the coefficient of $m_\lambda$ in $X_G$  is a multiple of the number of stable partitions of type $\lambda$ in $G$. Therefore, $[m_\lambda] X_e(G,H) < 0$  since equation~\eqref{eq:e_n_coeff} implies that the scaling factor is always positive. Hence  by Remark~\ref{remark:mpos}, $X_e(G,H)$ cannot be $e$-positive as $X_e(G,H)$ is not $m$-positive.
\end{proof}

\begin{proposition}
If $G>_e H$ then $S(G,1)<S(H,1)$. 
\label{prop:S(G,1)<S(H,1)}
\end{proposition}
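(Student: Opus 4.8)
The plan is to read off $S(G,1)$ and $S(H,1)$ as $e_{(n)}$-coefficients and then compare the scaling factor appearing in $X_e(G,H)$ against $1$ by testing the coefficient of $p_{(1^n)}$. By equation~\eqref{eq:e_n_coeff} we have $S(G,1)=[e_{(n)}]X_G$ and $S(H,1)=[e_{(n)}]X_H$, and by Theorem~\ref{thm:S(G,1)} both are strictly positive as soon as $n\geq 2$ (the case $n=1$ being vacuous, since $\mathcal{G}_1$ has a single element). Set $c=[e_{(n)}]X_G$ and $d=[e_{(n)}]X_H$, so that $X_e(G,H)=X_G-\frac{c}{d}X_H$, and recall that $G>_eH$ means $X_e(G,H)$ is $e$-positive and $X_G\neq X_H$.

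First I would record the fact, already used in the proof of Theorem~\ref{thm:Schur_X_G-X_H}, that $[p_{(1^n)}]X_G=1$ for every graph $G$: in the power-sum expansion of Theorem~\ref{thm:X_Gpowersum}, the only edge set $S$ with $\lambda(S)=(1^n)$ is $S=\emptyset$, which contributes the single term $p_{(1^n)}$. Consequently $[p_{(1^n)}]X_e(G,H)=1-\frac{c}{d}$. I would also observe that $X_e(G,H)\neq 0$: otherwise $X_G=\frac{c}{d}X_H$, and applying $[p_{(1^n)}]$ would force $\frac{c}{d}=1$, hence $X_G=X_H$, contradicting $G>_eH$.

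The conclusion is then immediate: $X_e(G,H)$ is $e$-positive, hence Schur-positive, and it is non-zero, so Lemma~\ref{lem:conversion_note} gives $[p_{(1^n)}]X_e(G,H)>0$; that is, $1-\frac{c}{d}>0$, whence $c<d$, i.e.\ $S(G,1)<S(H,1)$.

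There is no real obstacle here — the argument is short once the machinery of this section is in place. The one point to get right is the choice of test functional: $p_{(1^n)}$ is the unique coefficient equal to $1$ on every chromatic symmetric function, so it collapses the weighted subtraction into the single ratio $c/d$, while Lemma~\ref{lem:conversion_note} makes it strictly positive on non-zero Schur-positive (in particular $e$-positive) functions. The strictness hypothesis $X_G\neq X_H$ is used solely to guarantee $X_e(G,H)\neq 0$, so that Lemma~\ref{lem:conversion_note} applies.
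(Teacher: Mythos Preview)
Your proof is correct and follows essentially the same route as the paper: compute $[p_{(1^n)}]X_e(G,H)=1-\tfrac{[e_{(n)}]X_G}{[e_{(n)}]X_H}$ via Theorem~\ref{thm:X_Gpowersum}, then apply Lemma~\ref{lem:conversion_note} to the non-zero $e$-positive (hence Schur-positive) function $X_e(G,H)$ to force this to be strictly positive, and translate back via $[e_{(n)}]X_G=S(G,1)$. Your justification that $X_e(G,H)\neq 0$ is in fact more explicit than the paper's, which simply asserts it.
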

\begin{proof}
Let $G>_e H$ so $X_e(G,H)\neq 0$ is $e$-positive. It is well-known that $e$-positive functions are Schur-positive, so $X_e(G,H)\neq 0$ is Schur-positive. By Theorem~\ref{thm:X_Gpowersum} we can see that $[p_{(1^n)}]X_G=[p_{(1^n)}]X_H=1$ so we have that
$$[p_{(1^n)}]X_e(G,H)=1-\frac{[e_{(n)}]X_G}{[e_{(n)}]X_H}.$$
By Lemma~\ref{lem:conversion_note} this coefficient is positive so $[e_{(n)}]X_G<[e_{(n)}]X_H$. By equation~\eqref{eq:total_acyclic} we have our result. 
\end{proof}

\begin{proposition}
If $G \geq_e H$, then $\chi(G) \leq \chi(H)$.
\end{proposition}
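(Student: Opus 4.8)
The plan is to run the same argument as in Proposition~\ref{prop:alpha}, but with the chromatic number in place of the independence number, again exploiting the monomial expansion of $X_G$ (Theorem~\ref{thm:X_Gmonomial}) and the fact that $e$-positive forces $m$-positive (Remark~\ref{remark:mpos}). The starting observation I would record is that $\chi(G)$ equals the least number of blocks among all stable partitions of $G$: an optimal proper coloring of $G$ that uses all $\chi(G)$ colors has color classes forming a stable partition into exactly $\chi(G)$ non-empty blocks, and conversely any stable partition into $j$ blocks gives a proper coloring with $j$ colors. In particular, if $\mu\vdash n$ with $l(\mu)<\chi(G)$, then $G$ has no stable partition of type $\mu$, so $a_{\mu}=0$ and hence $[m_{\mu}]X_G=0$.

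Next I would argue by contradiction. Suppose $G\geq_e H$ but $\chi(G)>\chi(H)$. First I would dispose of the degenerate case: if $X_G=X_H$ then $\chi_G(k)=\chi_H(k)$ for all $k$, so $\chi(G)=\chi(H)$, contradiction; hence I may assume $X_e(G,H)\neq 0$ is $e$-positive. Then I would set $j=\chi(H)$ and, using an optimal proper coloring of $H$ as above, choose $\lambda\vdash n$ with $l(\lambda)=j$ and $a_{\lambda}(H)\geq 1$, so that $[m_{\lambda}]X_H=a_{\lambda}(H)\,\lambda^!\geq 1$ by Theorem~\ref{thm:X_Gmonomial}. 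On the other hand $l(\lambda)=j<\chi(G)$, so by the previous paragraph $[m_{\lambda}]X_G=0$.

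To finish, I would write $X_e(G,H)=X_G-c\,X_H$ with $c=[e_{(n)}]X_G/[e_{(n)}]X_H$, which is positive by equation~\eqref{eq:e_n_coeff} (through Theorem~\ref{thm:S(G,1)}). Then
$$[m_{\lambda}]X_e(G,H)=[m_{\lambda}]X_G-c\,[m_{\lambda}]X_H=-c\,[m_{\lambda}]X_H<0,$$
so $X_e(G,H)$ is not $m$-positive, hence not $e$-positive by Remark~\ref{remark:mpos} — contradicting $G\geq_e H$. Therefore $\chi(G)\leq\chi(H)$. I do not expect a genuine obstacle: the only points that deserve a sentence of care are the existence of a stable partition of $H$ into exactly $\chi(H)$ blocks (immediate from an optimal coloring that uses every color) and the handling of the case $X_G=X_H$, and the rest is parallel to Proposition~\ref{prop:alpha}.
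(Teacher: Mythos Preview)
Your proof is correct and follows essentially the same approach as the paper: assume $\chi(G)>\chi(H)$, pick a stable partition of $H$ of type $\lambda$ with $l(\lambda)=\chi(H)$, observe $[m_\lambda]X_H\geq 1$ while $[m_\lambda]X_G=0$, and use positivity of the scaling factor to get a negative $m$-coefficient in $X_e(G,H)$. Your added discussion of the degenerate case $X_G=X_H$ and the explicit justification of $c>0$ are minor elaborations, not a different route.
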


\begin{proof}
Suppose $\chi(G) > \chi(H)$. Since there is a coloring of $H$ with $\chi(H)$ colors, then $H$ has a stable partition of some type $\lambda \vdash n$ with length $\chi(H)$ 
and hence, we have $[m_\lambda] X_H \geq 1$ by Theorem~\ref{thm:X_Gmonomial}. Furthermore, $[m_\lambda] X_G = 0$ since by definition, $G$ cannot be colored with fewer than $\chi(G)$ colors and by assumption $\chi(G) > \chi(H)$. Therefore, $[m_\lambda] X_e(G,H) < 0$  since equation~\eqref{eq:e_n_coeff} implies that the scaling factor is always positive. Hence by Remark~\ref{remark:mpos}, $X_e(G,H)$ cannot be $e$-positive  as  $X_e(G,H)$ is  not $m$-positive.
\end{proof}

\begin{remark}
Note that there is no consistent relationship between  clique numbers $\omega(G)$ and relations in the poset $\cE_n$. We find that $P_5>_e L_{3,2}>_e C_5$ but $\omega(P_5)<\omega(L_{3,2})>\omega(C_5)$. 
\end{remark}

Earlier in Theorem~\ref{thm:G>K_n} we proved that $G$ $e$-positive if and only if $G\geq_e K_n$. We can further show that $K_n$ is a minimal element in our poset $\cE_n$. 

\begin{corollary}
The complete graph $K_n$ is a minimal element in ${\mathcal E}_n$.
\label{cor:K_n_min_elem}
\end{corollary}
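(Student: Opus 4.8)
The plan is to show that no graph $G$ on $n$ vertices can satisfy $G <_e K_n$, i.e., that $X_e(G, K_n)$ being $e$-positive forces $X_G = X_{K_n}$, so that $K_n$ has nothing strictly below it. First I would recall from the proof of Theorem~\ref{thm:G>K_n} that $X_e(G, K_n) = X_G - [e_{(n)}]X_G \cdot e_{(n)}$, which is simply $X_G$ with its $e_{(n)}$-coefficient deleted; in particular $[e_{(n)}]X_e(G,K_n) = 0$. Symmetrically, $X_e(K_n, G) = X_{K_n} - \frac{[e_{(n)}]X_{K_n}}{[e_{(n)}]X_G}X_G = n!\,e_{(n)} - \frac{n!}{[e_{(n)}]X_G}X_G$, using Proposition~\ref{prop:X_K_n}.

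Suppose for contradiction that $K_n$ is not minimal, so there exists $G$ with $G <_e K_n$, meaning $X_e(G, K_n)$ is $e$-positive and $X_G \neq X_{K_n}$. Since $X_e(G,K_n)$ is $e$-positive it is Schur-positive, hence $m$-positive by Remark~\ref{remark:mpos}; but it has the same $p_{(1^n)}$-coefficient analysis as before: by Theorem~\ref{thm:X_Gpowersum}, $[p_{(1^n)}]X_G = 1$, and since $e_{(n)}$ contributes nothing in degree $p_{(1^n)}$ except... here I would instead argue directly via the monomial or Schur basis. The cleanest route: $X_e(G,K_n)$ Schur-positive and $[p_{(1^n)}]X_e(G,K_n) = [p_{(1^n)}]X_G - [e_{(n)}]X_G\cdot[p_{(1^n)}]e_{(n)}$. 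Since $[p_{(1^n)}]e_{(n)} = \tfrac{1}{n!}$ (from $e_{(n)} = s_{(1^n)}$ and equation~\eqref{eq:schur_powersum}) and $[e_{(n)}]X_G = S(G,1)$ by equation~\eqref{eq:e_n_coeff}, we get $[p_{(1^n)}]X_e(G,K_n) = 1 - \frac{S(G,1)}{n!}$. By Lemma~\ref{lem:conversion_note}, if $X_e(G,K_n)$ is a non-zero Schur-positive function then this is $>0$, forcing $S(G,1) < n!$; and if it is zero then $X_G = [e_{(n)}]X_G\cdot e_{(n)}$, which combined with $[p_{(1^n)}]X_G=1$ gives $X_G = n!\,e_{(n)} = X_{K_n}$, contradicting $X_G\neq X_{K_n}$. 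So we may assume $X_e(G,K_n)\neq 0$ and $S(G,1) < n!$.

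Now I would use the other direction: consider whether any element lies strictly below $K_n$ by instead ruling out $G <_e K_n$ directly. The key observation is that $X_e(G, K_n) = X_G - S(G,1)\, e_{(n)}$ and, by equation~\eqref{eq:elem_acyclic} with $j=1$, the total $e$-expansion of $X_G$ has $\sum_{l(\lambda)=1} c_\lambda = c_{(n)} = S(G,1)$, while for each $j$, $\sum_{l(\lambda)=j} c_\lambda = S(G,j) \geq 0$. Deleting the $e_{(n)}$ term removes the entire length-one layer, but this alone does not create a negative coefficient. The actual contradiction must come from comparing $G$ with $K_n$ the other way: if $G <_e K_n$ then by the cover/order structure we would also need... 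Actually the correct and simple argument is: $K_n$ is minimal because nothing can be strictly below it — suppose $G <_e K_n$; then since $K_n$ is $e$-positive ($X_{K_n} = n!e_{(n)}$) we cannot use Theorem~\ref{thm:G>K_n} directly on $K_n$, but we can observe that $X_e(G,K_n)$ $e$-positive together with $X_{K_n}$ being a positive multiple of $e_{(n)}$ gives that $X_G = X_e(G,K_n) + S(G,1)e_{(n)}$ is $e$-positive, i.e. $G \geq_e K_n$ by Theorem~\ref{thm:G>K_n}; so $G \geq_e K_n$ and $G \leq_e K_n$, and antisymmetry forces $G = K_n$ in $\mathcal{G}_n$, i.e. $X_G = X_{K_n}$. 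Hence there is no element strictly below $K_n$, so $K_n$ is minimal.

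The main obstacle is simply packaging this observation cleanly: the whole point is that because $X_{K_n}$ is itself $e$-positive (indeed $e$-positive in the "most trivial" way, being a single positive multiple of $e_{(n)}$), being below $K_n$ in $\mathcal{E}_n$ collapses to being equal to $K_n$. I would present it in two lines: if $G \leq_e K_n$ then $X_G = X_e(G,K_n) + \frac{[e_{(n)}]X_G}{[e_{(n)}]X_{K_n}}X_{K_n}$ is a sum of two $e$-positive functions (the first by hypothesis, the second since $X_{K_n}=n!e_{(n)}$ and the scalar is positive by equation~\eqref{eq:e_n_coeff}), hence $X_G$ is $e$-positive, hence $G \geq_e K_n$ by Theorem~\ref{thm:G>K_n}; antisymmetry of $\mathcal{E}_n$ then gives $G = K_n$, so $K_n$ is minimal.
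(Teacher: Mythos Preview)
Your final two-line argument has the direction of the relation backward. By the paper's definition, $G \leq_e K_n$ means $K_n \geq_e G$, which says that $X_e(K_n,G)$ is $e$-positive --- not $X_e(G,K_n)$. So when you write that $X_G = X_e(G,K_n) + \frac{[e_{(n)}]X_G}{[e_{(n)}]X_{K_n}}X_{K_n}$ is ``a sum of two $e$-positive functions (the first by hypothesis)'', the hypothesis gives you nothing about the first summand. With the correct hypothesis you instead obtain
\[
X_G = [e_{(n)}]X_G \cdot e_{(n)} \;-\; \frac{[e_{(n)}]X_G}{n!}\, X_e(K_n,G),
\]
a \emph{difference} of $e$-positive functions, and your conclusion that $X_G$ is $e$-positive (hence $G\geq_e K_n$) does not follow. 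Indeed, your argument as written would equally ``prove'' that $K_n$ is maximal, which is false. (The idea can be rescued: the displayed equation forces $[e_\lambda]X_G \leq 0$ for every $\lambda \neq (n)$, and then equation~\eqref{eq:elem_acyclic} with $S(G,j)\geq 0$ forces all these coefficients to vanish, giving $X_G = n!\,e_{(n)} = X_{K_n}$. But that is a different argument from the one you presented.)

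The paper's proof takes a completely different and much shorter route: it invokes Proposition~\ref{prop:alpha}, which says $G\geq_e H$ implies $\alpha(G)\geq \alpha(H)$. Any connected $G\neq K_n$ has a non-edge and hence $\alpha(G)\geq 2 > 1 = \alpha(K_n)$, so $K_n \not\geq_e G$ and nothing lies strictly below $K_n$.
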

\begin{proof}
If $G\neq K_n$ is a connected graph on $n$ vertices then there are two vertices $u$ and $v$ without an edge between them. The set $\{u,v\}$ is an independent set and  so $\alpha(G)\geq 2$. By Proposition~\ref{prop:alpha} we know $K_n \not\geq_e G$ because $1= \alpha(K_n)<\alpha(G)$. Hence $K_n$ is a minimal element. 
\end{proof}

The next proposition gives us an  elegant condition that can generate independence sets in $\cE_n$.

\begin{theorem}
Let $\{G_1, G_2, \ldots, G_k\}$ be some set of connected graphs on $n$ vertices with equal $e_{(n)}$ coefficients, $[e_{(n)}]X_{G_i}=[e_{(n)}]X_{G_j}$, and distinct chromatic symmetric functions. Then 
$\{G_1, G_2, \ldots, G_k\}$ is an anti-chain  in  ${\mathcal E}_n$. 
\label{thm:elem_antichain}
\end{theorem}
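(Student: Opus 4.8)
The plan is to show that no two distinct graphs in the set are comparable, which by symmetry reduces to showing that if $G_i$ and $G_j$ are distinct graphs in the set, then $G_i \not\geq_e G_j$. The key observation is that when $[e_{(n)}]X_{G_i} = [e_{(n)}]X_{G_j}$, the scaling factor $\frac{[e_{(n)}]X_{G_i}}{[e_{(n)}]X_{G_j}}$ equals $1$, so $X_e(G_i,G_j) = X_{G_i} - X_{G_j}$ is simply the honest difference of the two chromatic symmetric functions. Then I would invoke Theorem~\ref{thm:elem_X_G-X_H}: a difference $X_{G_i} - X_{G_j}$ is either zero or not $e$-positive. Since the graphs have distinct chromatic symmetric functions, $X_{G_i} - X_{G_j} \neq 0$, hence it is not $e$-positive, so $G_i \not\geq_e G_j$.

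Carrying this out concretely: first fix $i \neq j$ and note $X_e(G_i, G_j) = X_{G_i} - \frac{[e_{(n)}]X_{G_i}}{[e_{(n)}]X_{G_j}} X_{G_j} = X_{G_i} - X_{G_j}$ using the hypothesis on equal $e_{(n)}$ coefficients (these coefficients are nonzero by equation~\eqref{eq:e_n_coeff} and Theorem~\ref{thm:S(G,1)}, so the ratio is well-defined). Second, since $X_{G_i} \neq X_{G_j}$ by the distinctness assumption, Theorem~\ref{thm:elem_X_G-X_H} tells us $X_{G_i} - X_{G_j}$ is not $e$-positive. Therefore $G_i \not\geq_e G_j$ by the definition of the relation in $\cE_n$. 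By the same argument with the roles of $i$ and $j$ swapped, $G_j \not\geq_e G_i$. Hence $G_i$ and $G_j$ are incomparable, and since $i, j$ were arbitrary distinct indices, the set $\{G_1, \ldots, G_k\}$ is an anti-chain in $\cE_n$.

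There is essentially no obstacle here; the proof is a direct combination of the hypothesis (which trivializes the weighted subtraction into an ordinary difference) with the already-established Theorem~\ref{thm:elem_X_G-X_H}. The only point requiring a moment of care is confirming the scaling factor is genuinely $1$ and well-defined, which follows since $[e_{(n)}]X_{G_j} = S(G_j, 1) > 0$ for any connected graph on $n \geq 2$ vertices (for $n=1$ the statement is vacuous as there is a unique connected graph $K_1$).

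\begin{proof}
Let $G_i$ and $G_j$ be distinct graphs in the set, so $X_{G_i} \neq X_{G_j}$. By equation~\eqref{eq:e_n_coeff} and Theorem~\ref{thm:S(G,1)} the coefficient $[e_{(n)}]X_{G_j} = S(G_j,1)$ is positive, so the scaling factor in $X_e(G_i,G_j)$ is well-defined, and by hypothesis it equals
$$\frac{[e_{(n)}]X_{G_i}}{[e_{(n)}]X_{G_j}} = 1.$$
Therefore $X_e(G_i,G_j) = X_{G_i} - X_{G_j}$. Since $X_{G_i} - X_{G_j} \neq 0$, Theorem~\ref{thm:elem_X_G-X_H} implies $X_{G_i} - X_{G_j}$ is not $e$-positive, so $G_i \not\geq_e G_j$. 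Interchanging the roles of $i$ and $j$ gives $G_j \not\geq_e G_i$ as well. Hence any two distinct elements of $\{G_1, G_2, \ldots, G_k\}$ are incomparable, so this set is an anti-chain in ${\mathcal E}_n$.
\end{proof}
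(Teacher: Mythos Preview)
Your proof is correct and takes essentially the same approach as the paper: reduce $X_e(G_i,G_j)$ to $X_{G_i}-X_{G_j}$ via the equal $e_{(n)}$ coefficients, then invoke Theorem~\ref{thm:elem_X_G-X_H}. Your version is slightly more careful in justifying that the scaling factor is well-defined and equals $1$, but the argument is otherwise identical.
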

\begin{proof}
Let $\{G_1, G_2, \ldots, G_k\}$ satisfy all the assumptions stated in the proposition. This means $X_e(G_i,G_j)=X_{G_i}-X_{G_j}$. By Theorem~\ref{thm:elem_X_G-X_H} we know that either $X_{G_i}-X_{G_j}=0$ so $i=j$ or $X_{G_i}-X_{G_j}$ is not $e$-positive. This implies that $G_i$ is not related to $G_j$ for all $i\neq j$, which means that $\{G_1, G_2, \ldots, G_k\}$ is an anti-chain  in  ${\mathcal E}_n$. 
\end{proof}

\begin{remark}
By Theorem~\ref{thm:elem_antichain} given any integer $z\in\Z$ the collection of graphs on $n$ vertices $\{G:[e_{(n)}]X_G=z\}$ is an anti-chain in $\cE_n$ under the assumption that we are grouping graphs together in $\cE_n$ if they have equal chromatic symmetric function. 
\end{remark}

\begin{corollary}
Trees on $n$ vertices with distinct chromatic symmetric functions form an anti-chain  in  ${\mathcal E}_n$.
\label{cor:trees_max_elem}
\end{corollary}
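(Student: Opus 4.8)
The plan is to deduce this immediately from Theorem~\ref{thm:elem_antichain} together with the coefficient computation in Corollary~\ref{cor:X_T_coeff}. First I would recall that there are only finitely many trees on $n$ vertices, so after grouping trees according to their chromatic symmetric function we obtain a finite collection $\{T_1, T_2, \ldots, T_k\}$ of trees on $n$ vertices with pairwise distinct chromatic symmetric functions (one representative per equivalence class in $\mathcal{G}_n$). Each $T_i$ is a connected graph on $n$ vertices, so the first hypothesis of Theorem~\ref{thm:elem_antichain} holds, and the "distinct chromatic symmetric functions" hypothesis holds by construction.

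The key remaining point is the equality of the $e_{(n)}$ coefficients. By Corollary~\ref{cor:X_T_coeff}, every tree $T$ on $n$ vertices satisfies $[e_{(n)}]X_T = n$. Hence $[e_{(n)}]X_{T_i} = n = [e_{(n)}]X_{T_j}$ for all $i$ and $j$, so the hypothesis $[e_{(n)}]X_{G_i} = [e_{(n)}]X_{G_j}$ of Theorem~\ref{thm:elem_antichain} is satisfied. Applying that theorem then yields that $\{T_1, \ldots, T_k\}$ is an anti-chain in $\mathcal{E}_n$, which is exactly the assertion that trees on $n$ vertices with distinct chromatic symmetric functions form an anti-chain in $\mathcal{E}_n$.

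There is essentially no obstacle here: the statement is a direct specialization of Theorem~\ref{thm:elem_antichain}, with the only input being the value $[e_{(n)}]X_T = n$ from Corollary~\ref{cor:X_T_coeff}. If I wanted to be fully self-contained I could also inline the one-line argument: since $X_e(T_i, T_j) = X_{T_i} - (n/n) X_{T_j} = X_{T_i} - X_{T_j}$, Theorem~\ref{thm:elem_X_G-X_H} forces this difference to be either $0$ (impossible for $i \neq j$, as the chromatic symmetric functions are distinct) or not $e$-positive, so no two distinct trees are related in $\mathcal{E}_n$. Either phrasing completes the proof.
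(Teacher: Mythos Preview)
Your proposal is correct and matches the paper's own proof essentially verbatim: the paper simply cites Corollary~\ref{cor:X_T_coeff} to get $[e_{(n)}]X_T=n$ for all trees and then invokes Theorem~\ref{thm:elem_antichain}. Your additional inline alternative via Theorem~\ref{thm:elem_X_G-X_H} is also fine but not needed.
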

\begin{proof}
By Corollary~\ref{cor:X_T_coeff} all trees $T$ on $n$ vertices have $[e_{(n)}]X_T=n$, so by Theorem~\ref{thm:elem_antichain} we have our result. 
\end{proof}

Next we will show that trees not only form an anti-chain, but that they  are actually maximal elements in  ${\mathcal E}_n$.  In order to do this we will need a lower bound on $[e_{(n)}]X_G$.

\begin{lemma}
For any connected graph $G$ on $n$ vertices with  $\epsilon\in E(G)$
\begin{enumerate}[(i)]
\item $[e_{(n)}]X_G=[e_{(n)}]X_{G-\epsilon}+\frac{n}{n-1}X_{G/\epsilon}$, 
\item $[e_{(n)}]X_G\geq n(\# E(G)-n+2)$ and 
\item $[e_{(n)}]X_G\geq n$.
\end{enumerate}
\label{lem:edge_elem_bound}
\end{lemma}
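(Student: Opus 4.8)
The plan is to establish the three parts in order, using part (i) as the engine for the two lower bounds via induction on the number of edges. For part (i), I would invoke the known behaviour of $[e_{(n)}]X_G$ under edge deletion--contraction. By equation~\eqref{eq:e_n_coeff}, $[e_{(n)}]X_G = (-1)^{n-1} n \cdot [k]\chi_G(k)$, so the claimed identity should reduce to the deletion--contraction relation~\eqref{eq:deletion-contraction} for the chromatic polynomial, namely $\chi_G(k) = \chi_{G-\epsilon}(k) - \chi_{G/\epsilon}(k)$. Taking the coefficient of $k$ on both sides and multiplying by $(-1)^{n-1} n$, the term coming from $G/\epsilon$ (which has $n-1$ vertices, so $[e_{(n-1)}]X_{G/\epsilon} = (-1)^{n-2}(n-1)\cdot[k]\chi_{G/\epsilon}(k)$) gets rescaled by exactly the factor $\frac{n}{n-1}$, with the sign $(-1)^{n-1} = -(-1)^{n-2}$ being absorbed; I will need to double-check the sign here since the statement as written has a plus sign, which suggests the intended reading is $[e_{(n)}]X_G = [e_{(n)}]X_{G-\epsilon} + \frac{n}{n-1}[e_{(n-1)}]X_{G/\epsilon}$ (there appears to be a typo omitting $[e_{(n-1)}]$). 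The sign works out because $[k]\chi_{G/\epsilon}(k) \le 0$-direction considerations cancel: concretely, $[e_{(n)}]X_G - [e_{(n)}]X_{G-\epsilon} = (-1)^{n-1}n([k]\chi_G(k) - [k]\chi_{G-\epsilon}(k)) = (-1)^{n-1}n\cdot(-[k]\chi_{G/\epsilon}(k)) = \frac{n}{n-1}\cdot(-1)^{n-2}(n-1)[k]\chi_{G/\epsilon}(k) = \frac{n}{n-1}[e_{(n-1)}]X_{G/\epsilon}$, using $-(-1)^{n-1} = (-1)^{n-2}$.

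For part (ii), I would induct on $\#E(G)$. The base case is when $G$ is a tree, i.e. $\#E(G) = n-1$, where the claimed bound reads $[e_{(n)}]X_G \ge n(\,(n-1) - n + 2\,) = n$, which holds with equality by Corollary~\ref{cor:X_T_coeff}. (Trees are exactly the connected graphs on $n$ vertices with the minimum number of edges, so this is the correct base.) For the inductive step, suppose $\#E(G) = m > n-1$; then $G$ is not a tree, so it contains an edge $\epsilon$ lying on a cycle, and $G - \epsilon$ is still connected with $m-1$ edges. By induction $[e_{(n)}]X_{G-\epsilon} \ge n(m-1-n+2) = n(m-n+1)$. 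By part (i), $[e_{(n)}]X_G = [e_{(n)}]X_{G-\epsilon} + \frac{n}{n-1}[e_{(n-1)}]X_{G/\epsilon}$, and the correction term $\frac{n}{n-1}[e_{(n-1)}]X_{G/\epsilon}$ is positive: indeed $G/\epsilon$ is a connected graph on $n-1$ vertices with at least one edge (since $n \ge 2$ and it's connected on $\ge 2$ vertices... actually one should note $G/\epsilon$ has $n-1 \ge 2$ vertices when $n \ge 3$, and is connected hence has an edge), so by Theorem~\ref{thm:S(G,1)} together with equation~\eqref{eq:e_n_coeff} we get $[e_{(n-1)}]X_{G/\epsilon} = S(G/\epsilon, 1) > 0$, and in fact $[e_{(n-1)}]X_{G/\epsilon} \ge n-1$ by part (iii) applied at size $n-1$ (which is fine if I order the induction correctly, or I can simply use positivity which suffices to push the bound up by at least... hmm, I need $+n$). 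Actually I need the correction to contribute at least $n$: using $[e_{(n-1)}]X_{G/\epsilon} \ge n-1$ gives $\frac{n}{n-1}(n-1) = n$, so $[e_{(n)}]X_G \ge n(m-n+1) + n = n(m-n+2)$, completing the step. This forces me to prove (iii) first, or to interleave the inductions.

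The cleanest organization, which I would adopt, is: prove (i) directly from deletion--contraction as above; then prove (ii) and (iii) \emph{simultaneously} by a single induction on $\#E(G)$ (with the family of all connected graphs on any number $\ge 1$ of vertices), noting that (iii) is an immediate consequence of (ii) since $\#E(G) \ge n-1$ for connected $G$ gives $n(\#E(G)-n+2) \ge n(n-1-n+2) = n$. Thus (iii) follows from (ii) with no extra work, and the base case of (ii) is the tree case handled by Corollary~\ref{cor:X_T_coeff}, while the inductive step deletes an edge on a cycle and applies (i) plus the already-established bound $[e_{(n-1)}]X_{G/\epsilon} \ge n-1$ (which is (iii) at the smaller vertex count, available since $G/\epsilon$ has the same or fewer edges — more carefully, one inducts on edge count across all vertex sizes at once, and $G/\epsilon$ has strictly fewer edges than $G$). \textbf{The main obstacle} I anticipate is bookkeeping the sign and the $\frac{n}{n-1}$ factor in part (i) correctly — in particular confirming the intended statement and making sure the scaling factor is exactly $\frac{n}{n-1}$ and not its reciprocal — together with ensuring the two-variable induction (on edge count, across vertex sizes) is set up so that the bound $[e_{(n-1)}]X_{G/\epsilon} \ge n-1$ is legitimately available when needed; everything else is routine arithmetic.
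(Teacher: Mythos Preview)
Your proposal is correct and matches the paper's approach essentially line for line: part (i) via deletion--contraction applied to equation~\eqref{eq:e_n_coeff} (and yes, the intended right-hand side is $[e_{(n)}]X_{G-\epsilon}+\frac{n}{n-1}[e_{(n-1)}]X_{G/\epsilon}$), part (ii) by induction on the number of edges with the tree base case from Corollary~\ref{cor:X_T_coeff} and the step obtained by deleting a cycle edge and applying (i), and part (iii) as an immediate consequence of (ii) since $\#E(G)\geq n-1$. The only cosmetic difference is that the paper invokes the full bound (ii) on $G/\epsilon$ together with $\#E(G/\epsilon)\geq n-2$, whereas you invoke the weaker (iii) on $G/\epsilon$ directly; these yield the same contribution $\frac{n}{n-1}(n-1)=n$, and your observation that one must induct on edge count across all vertex sizes simultaneously is exactly the implicit setup in the paper.
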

\begin{proof}We can use the chromatic polynomial to calculate $[e_{(n)}]X_G$ via equation~\eqref{eq:e_n_coeff}. 
Using deletion-contraction from equation~\eqref{eq:deletion-contraction} on the chromatic polynomial in this formula gives us
\begin{align*}
[e_{(n)}]X_G&=(-1)^{n-1}n\cdot [k](\chi_{G-\epsilon}(k)-\chi_{G/\epsilon}(k))\\
&=(-1)^{n-1}n\cdot [k]\chi_{G-\epsilon}(k)+\frac{n}{n-1}(-1)^{n-2}(n-1)\cdot [k]X_{G/\epsilon}\\
&=[e_{(n)}]X_{G-\epsilon}+\frac{n}{n-1}X_{G/\epsilon}.
\end{align*}

To prove (ii) we will induct on the number of edges. If $G$ has the minimal number of edges for a connected graph then $G$ is a tree and $[e_{(n)}]X_G=n$ by Corollary~\ref{cor:X_T_coeff}, which satisfies the inequality. Let $\# E(G)>n-1$. This means that $G$ is not a tree and there exists a cycle. Let $\epsilon$ be an edge on one of these cycles, so $G-\epsilon$ is connected. Using part (i), induction and the fact that $\#E(G/\epsilon)\geq n-2$ we have 
\begin{align*}
[e_{(n)}]X_G&=[e_{(n)}]X_{G-\epsilon}+\frac{n}{n-1}X_{G/\epsilon}\\
&\geq n(\#E(G-\epsilon)-n+2)+n(\#E(G/\epsilon)-n+3)\\
&\geq n(\# E(G)-n+2).
\end{align*}
Because $G$ is a connected graph, so $\# E(G)\geq n-1$, we can conclude from part (ii) that $[e_{(n)}]X_G\geq n$.
\end{proof}

\begin{proposition}
All trees on $n$ vertices are maximal elements in  ${\mathcal E}_n$. 
\label{prop:trees_maximal_elem}
\end{proposition}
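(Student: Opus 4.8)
The plan is to show that a tree $T$ cannot sit strictly below any graph $G$ in $\mathcal{E}_n$, i.e. there is no connected graph $G\neq T$ on $n$ vertices with $G >_e T$. Suppose such a $G$ exists, so $X_e(G,T)$ is $e$-positive and nonzero. By Corollary~\ref{cor:X_T_coeff} we have $[e_{(n)}]X_T = n$, so the scaling factor in $X_e(G,T) = X_G - \frac{[e_{(n)}]X_G}{[e_{(n)}]X_T}X_T$ is $\frac{[e_{(n)}]X_G}{n}$. The idea is to extract a contradiction from the $p_{(1^n)}$ (equivalently $m_{(1^n)}$) coefficient, exactly as in Proposition~\ref{prop:S(G,1)<S(H,1)}: since $e$-positivity implies Schur-positivity, Lemma~\ref{lem:conversion_note} forces $[p_{(1^n)}]X_e(G,T) > 0$, and since $[p_{(1^n)}]X_G = [p_{(1^n)}]X_T = 1$ (Theorem~\ref{thm:X_Gpowersum}), this says $1 - \frac{[e_{(n)}]X_G}{n} > 0$, i.e. $[e_{(n)}]X_G < n$.

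But Lemma~\ref{lem:edge_elem_bound}(iii) gives $[e_{(n)}]X_G \geq n$ for every connected graph $G$ on $n$ vertices. Combining, we would need $n \leq [e_{(n)}]X_G < n$, which is impossible. Hence no such $G$ exists, and every tree is a maximal element of $\mathcal{E}_n$.

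I would write this up as: assume $T <_e G$ for a tree $T$ and a connected graph $G$; then $X_e(G,T)\neq 0$ is $e$-positive, hence Schur-positive; apply Lemma~\ref{lem:conversion_note} together with $[p_{(1^n)}]X_G = [p_{(1^n)}]X_T = 1$ to deduce $[e_{(n)}]X_G < [e_{(n)}]X_T = n$ (using Corollary~\ref{cor:X_T_coeff}); then invoke Lemma~\ref{lem:edge_elem_bound}(iii) for the contradiction $[e_{(n)}]X_G \geq n$.

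There is essentially no obstacle here — all the real work is already packaged in Lemma~\ref{lem:edge_elem_bound} and Lemma~\ref{lem:conversion_note}; the only thing to be careful about is the direction of the inequality and making sure the scaling factor $[e_{(n)}]X_G / [e_{(n)}]X_T$ is handled correctly (it is positive by equation~\eqref{eq:e_n_coeff}, and the key point is whether it exceeds $1$). One could alternatively phrase the contradiction purely in terms of acyclic orientations with one sink via $S(G,1) = [e_{(n)}]X_G$ and the monotonicity of Proposition~\ref{prop:S(G,1)<S(H,1)}, since a tree minimizes $S(G,1)$ among connected graphs; but the direct argument above is cleanest.
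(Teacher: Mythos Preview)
Your proof is correct and is essentially the paper's own argument. The only cosmetic difference is that the paper cites Proposition~\ref{prop:S(G,1)<S(H,1)} directly to obtain $[e_{(n)}]X_G < [e_{(n)}]X_T$, whereas you inline the proof of that proposition (via Lemma~\ref{lem:conversion_note} and the $p_{(1^n)}$ coefficient); the remaining steps using Corollary~\ref{cor:X_T_coeff} and Lemma~\ref{lem:edge_elem_bound}(iii) are identical.
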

\begin{proof}
Suppose that $G >_e T$ for some tree $T$ and a connected graph $G$ each on $n$ vertices. By assumption, $X_e(G, T)$ is non-zero and $e$-positive, using Proposition~\ref{prop:S(G,1)<S(H,1)} and equation~\eqref{eq:e_n_coeff}, we have $[e_{(n)}] X_G < [e_{(n)}] X_T.$ Hence, $[e_{(n)}] X_G < n$ by Corollary~\ref{cor:X_T_coeff} since $T$ was a tree. However, Lemma~\ref{lem:edge_elem_bound} (iii) implies that $[e_{(n)}] X_G \geq n$ for any connected graph $G$, so we have reached a contradiction. Hence, $T$ is maximal.   
\end{proof}

While all trees are maximal elements there  are many that are specifically independent elements. One such family of trees are the star graphs $S_n$. To show this we will need several lemmas and {well-known results}. One {well-known result} we need for our lemma is Stanley's formula for $X_G$ in the power-sum basis that is summed over the bond lattice of the graph. The {\it bond lattice}, $L_G$, of a graph is a poset with vertices formed from connected partitions of the vertices $V(G)$. Let $\pi_1$ and $\pi_2$ be two connected partitions of $V(G)$. We say $\pi_1\leq_L\pi_2$ if all blocks in $\pi_1$ are a subset of some block of $\pi_2$. Let $\mu_L$ be the M\"{o}bius function of the poset $L_G$. 
Stanley's formula~\cite[Theorem 2.6]{Stan95} is 
\begin{equation}
X_G=\sum_{\pi\in L_G}\mu_L(\hat 0,\pi)p_{\pi}
\label{eq:X_Gpowersum_poset}
\end{equation}
where $\hat 0$ is the partition with all vertices in their own block and $p_{\pi}$ is the power-sum function associated to the sizes of the blocks in $\pi$. It is known, for example \cite{Stan95}, that $(-1)^{n-1}\mu_L(\hat 0,\hat 1)>0$ where $\hat 1$ is the partition with one block and that $(-1)^{n- l(\pi)}\mu_L(\hat 0,\pi)>0$. From this we know $(-1)^{n-1}[p_{(n)}]X_G>0$ and the sign of any coefficient of $p_{\lambda}$ in $X_G$. 

Another result we will need is the  the Newtonian identity~\cite[Proposition 7.7.6]{stanley2} 
\begin{equation}
p_n=\sum_{\lambda=(1^{m_1},2^{m_2},\ldots, n^{m_n})}\frac{(-1)^{n- l(\lambda)}n( l(\lambda)-1)!}{m_1!m_2!\cdots m_n!}e_{\lambda}.
\label{eq:powersum_to_elem}
\end{equation}

\begin{lemma}
Let $G$ be a connected graph with $\#V(G)\geq 4$ that is not a tree. Then 
\begin{enumerate}[(i)]
\item $G$ has a connected partition of type $\lambda=(\lambda_1,\lambda_2)$ with all parts  more than one and 
\item $[e_{\lambda}]X_G+[e_{(n)}]X_G>0$ if $\lambda_1\neq \lambda_2$ or
\item $[e_{\lambda}]X_G+\frac{[e_{(n)}]X_G}{2}>0$ if $\lambda_1= \lambda_2$. 
\end{enumerate}
\label{lem:nontrees}
\end{lemma}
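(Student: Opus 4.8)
The plan is to prove the three parts in order, using the power-sum expansion over the bond lattice together with the Newtonian identity to convert to the elementary basis, and carefully tracking the sign of the relevant coefficients.

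\textbf{Part (i): existence of a connected partition of type $(\lambda_1,\lambda_2)$ with both parts $>1$.}
Since $G$ is connected and not a tree, $\#E(G)\geq n\geq 4$, so $G$ contains a cycle; pick an edge $\epsilon$ on a cycle, so $G-\epsilon$ is still connected. I want to split the $n\geq 4$ vertices into two connected pieces each of size at least $2$. First I would argue that $G$ has a spanning tree $R$, and in a tree on $n\geq 4$ vertices one can always cut a single edge to get two subtrees each with at least $2$ vertices \emph{unless} the tree is a star $S_n$. If $R$ is not a star we are done: removing the cut edge gives two connected subtrees $B_1,B_2$ of $G$ (connectedness in $G$ only improves), each of size $\geq 2$, yielding the desired partition $\{B_1,B_2\}$. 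If \emph{every} spanning tree of $G$ is a star, then $G$ has a vertex $v$ adjacent to all others and every other pair of vertices is... not necessarily nonadjacent; but since $G$ is not a tree there is an edge $xy$ with $x,y\neq v$. Then take $B_1=\{v,x\}$ (connected, since $vx\in E(G)$) and $B_2=V(G)\setminus\{v,x\}\supseteq\{y,\dots\}$ which is connected because $y$ is adjacent to any other vertex $w$ via... here I must be careful, so instead I would take $B_1 = \{x,y\}$ using the edge $xy$ and $B_2 = V(G)\setminus\{x,y\}$, which contains $v$ and is connected because $v$ is adjacent to every vertex of $B_2$. Either way we obtain a connected partition of type $(\lambda_1,\lambda_2)$ with $\lambda_1,\lambda_2\geq 2$, and $\lambda_1+\lambda_2=n$.

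\textbf{Parts (ii) and (iii): sign of $[e_\lambda]X_G$ plus a multiple of $[e_{(n)}]X_G$.}
Using Stanley's bond-lattice formula \eqref{eq:X_Gpowersum_poset}, $X_G=\sum_{\pi\in L_G}\mu_L(\hat 0,\pi)p_\pi$. The partitions $\pi$ contributing a $p_{(n)}$ term are exactly those with $l(\pi)=1$, i.e.\ $\pi=\hat 1$, and those contributing a $p_{(\lambda_1,\lambda_2)}$ term (with $\lambda_1\neq\lambda_2$, or $\lambda_1=\lambda_2$) have $l(\pi)=2$. So I would write $X_G = c_{(n)}p_{(n)} + c_{(\lambda_1,\lambda_2)}p_{(\lambda_1,\lambda_2)} + (\text{other } p\text{-terms})$, where by the cited sign facts $(-1)^{n-1}c_{(n)}>0$ and $(-1)^{n-2}c_{(\lambda_1,\lambda_2)}>0$, i.e.\ $c_{(n)}$ and $-c_{(\lambda_1,\lambda_2)}$ have the same sign $(-1)^{n-1}$. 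Moreover $c_{(\lambda_1,\lambda_2)}$ is a nonzero integer (it counts, with sign, the connected partitions of type $(\lambda_1,\lambda_2)$, which exist by part (i), summed against Möbius values all of the same sign — so it does not vanish), hence $|c_{(\lambda_1,\lambda_2)}|\geq 1$. Now expand $p_{(n)}$ via the Newtonian identity \eqref{eq:powersum_to_elem} and $p_{(\lambda_1,\lambda_2)}=p_{\lambda_1}p_{\lambda_2}$ similarly; the coefficient $[e_{(n)}]p_{(n)}=(-1)^{n-1}n$, while $[e_{(n)}]p_{(\lambda_1,\lambda_2)}$ comes from choosing the $e_{(n)}$-free... more precisely from the product, and equals $(-1)^{n-2}\cdot(\text{a positive rational})$; one checks the only contribution to $e_{(\lambda_1,\lambda_2)}$ from the other $p$-terms is none, and that $[e_{(\lambda_1,\lambda_2)}]p_{(\lambda_1,\lambda_2)}$ is a positive number (product of the leading $e$-coefficients of each $p_{\lambda_i}$, which are positive), and that no other $p_\mu$ in the expansion contributes to $e_{(\lambda_1,\lambda_2)}$ or to $e_{(n)}$. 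Assembling these, $[e_\lambda]X_G$ and $[e_{(n)}]X_G$ are signed sums dominated by the $c_{(\lambda_1,\lambda_2)}$ and $c_{(n)}$ terms respectively; since these have opposite signs and $|c_{(\lambda_1,\lambda_2)}|\geq 1$, the combination $[e_\lambda]X_G + \delta\,[e_{(n)}]X_G$ (with $\delta=1$ if $\lambda_1\neq\lambda_2$, $\delta=\tfrac12$ if $\lambda_1=\lambda_2$, the $\tfrac12$ accounting for the symmetry of the type) works out positive.

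\textbf{Main obstacle.}
The delicate point is bookkeeping the contributions to $[e_\lambda]X_G$ and $[e_{(n)}]X_G$ from \emph{all} the $p_\mu$ terms in \eqref{eq:X_Gpowersum_poset}, not just $p_{(n)}$ and $p_{(\lambda_1,\lambda_2)}$ — a priori a term like $p_{(\lambda_1,\lambda_2-1,1)}$ could also contribute to $e_{(n)}$. I expect the resolution is that $[e_{(n)}]X_G$ has the known clean value $(-1)^{n-1}n\cdot[k]\chi_G(k) = S(G,1)\geq n > 0$ from \eqref{eq:e_n_coeff}, so I do not need to re-derive it; and the real content is showing $[e_\lambda]X_G > -\delta[e_{(n)}]X_G$. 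For that I would instead argue more structurally: use part (i) in Stanley's monomial-type / stable-partition language is wrong here (we need connected partitions), so I would track that $[e_\lambda]X_G$, when $X_G$ is expanded via the transition matrix from $p$ to $e$, receives from $p_{(\lambda_1,\lambda_2)}$ a term of sign $(-1)^{n-l(\lambda)} = (-1)^{n-2}$ and magnitude at least $1$, the same sign as $(-1)^{n-2}$ contributions everywhere except the single $e_{(n)}$ slot — and that the only "bad" negative contribution to $e_\lambda$ of size comparable to $[e_{(n)}]X_G$ would come from $p$-terms of length $1$, i.e.\ from $c_{(n)}p_{(n)}$, whose $e_\lambda$-coefficient under the Newtonian identity is precisely $-\delta^{-1}\cdot$(its $e_{(n)}$-coefficient scaled), giving exact cancellation in the combination. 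This matching of the $p_{(n)}\to e_\lambda$ coefficient against the $p_{(n)}\to e_{(n)}$ coefficient (ratio exactly $\delta$) is the crux; I would verify it directly from \eqref{eq:powersum_to_elem} by comparing the $\lambda$ and $(n)$ terms of that sum.
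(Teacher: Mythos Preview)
Your approach to parts (ii)--(iii) is the same as the paper's: expand $X_G$ in the power-sum basis via the bond lattice, push to the elementary basis via the Newtonian identity, and compare the $p_{(n)}$ and $p_\lambda$ contributions. The ``main obstacle'' you flag is not actually an obstacle, and dissolving it cleanly is precisely the paper's key step: because both bases are multiplicative, expanding $p_\nu = p_{\nu_1}\cdots p_{\nu_{l(\nu)}}$ in the $e$-basis only produces $e_\mu$ with $l(\mu)\geq l(\nu)$. Since $l(\lambda)=2$, the only $\nu$ with $[e_\lambda]p_\nu\neq 0$ are $\nu=(n)$ and two-part $\nu$; for two-part $\nu$ the unique length-two term is $e_\nu$ itself, forcing $\nu=\lambda$. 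Likewise only $\nu=(n)$ contributes to $e_{(n)}$, so your worry about $p_{(\lambda_1,\lambda_2-1,1)}$ is groundless. After this the computation is exactly what you outline at the end: $[e_{(n)}]X_G = (-1)^{n-1}n\,c_{(n)}$, while $[e_\lambda]X_G = c_{(n)}\,[e_\lambda]p_{(n)} + c_\lambda\,(-1)^{n-2}\lambda_1\lambda_2$ with $[e_\lambda]p_{(n)} = (-1)^{n-2}n$ (resp.\ $(-1)^{n-2}n/2$) when $\lambda_1\neq\lambda_2$ (resp.\ $\lambda_1=\lambda_2$). The first summand equals $-[e_{(n)}]X_G$ (resp.\ $-\tfrac12[e_{(n)}]X_G$) and the second is strictly positive by part (i) and the sign $(-1)^{n-2}c_\lambda>0$. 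You should state this length argument up front rather than leaving it as something you ``would verify''.

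For part (i) your spanning-tree argument is genuinely different from the paper's, which inducts on the number of edges with base case the unicyclic graphs. Yours is shorter: cut a non-leaf edge of a spanning tree that is not a star; if the spanning tree is a star centered at $v$, use the extra non-tree edge $xy$ to form $\{x,y\}$ and its complement (connected via the universal vertex $v$). One cleanup: you do not need ``every spanning tree is a star''---once the chosen spanning tree is a star, its center is already adjacent to all other vertices of $G$, which is all you use.
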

\begin{proof}
First we will show  part (i) by inducting on the number of edges. The minimal number of edges for connected non-tree graphs $G$  is $n$ where $G$ is a unicyclic graph, meaning it has only one cycle. Note that if we remove any two edges from the cycle we have disconnected the graph into two connected components, and that the sizes of these components form a connected partition. However, we are not guaranteed that the sizes are more than one. First consider the case where our unique cycle has three vertices. Because $\#V(G)\geq 4$ there is some vertex $v$ in the cycle connected to another vertex $u$ outside the cycle. If we remove the two edges in the cycle incident to $v$ then we will have two components of size more than one. Next consider the case where the unique cycle has four or more vertices. Removing any two edges that do not share a vertex will guarantee that the two connected components have size more than one. This completes the base case. 

Assume that $\#E(G)>n$ and that any connected non-tree graph $H$ with $\#E(H)<\#E(G)$ has a connected partition of type $\lambda=(\lambda_1,\lambda_2)$ where all parts are more than one. There will exist some edge $\epsilon\in E(G)$ whose removal does not disconnect $G$. Because $\#E(G-\epsilon)\geq n$ we know that $G-\epsilon$ is a connected non-tree graph. By induction $G-\epsilon$ has a connected partition $\lambda=(\lambda_1,\lambda_2)$ with the required conditions, so we can conclude that $G$ does as well. This completes our proof of part (i). 

Let   $\lambda=(\lambda_1,\lambda_2)$ be a connected partition of $G$ where all parts are more than one. We will do some calculations for $[e_{\lambda}]X_G$ considering $X_G$ written in the power-sum basis. Because the power-sum and elementary bases are multiplicative and because of equation~\eqref{eq:powersum_to_elem} we can see that $e_{(n)}$ will only appear in $p_{\nu}$ when $\nu=(n)$. This means that 
\begin{equation}
[e_{(n)}]X_G=[p_{(n)}]X_G\cdot [e_{(n)}]p_{(n)}=(-1)^{n-1}n\cdot [p_{(n)}]X_G.
\label{eq:elem_conversion1}
\end{equation}
By equation~\eqref{eq:powersum_to_elem}
we can also see that that $e_{\lambda}$ will only appear in $p_{\nu}$ when $\nu=(n)$ or $\nu=\lambda$ so
\begin{equation}
[e_{\lambda}]X_G=[p_{(n)}]X_G\cdot [e_{\lambda}]p_{(n)}+[p_{\lambda}]X_G\cdot [e_{\lambda}]p_{\lambda}=[p_{(n)}]X_G\cdot [e_{\lambda}]p_{(n)}+[p_{\lambda}]X_G\cdot (-1)^{n-2}\lambda_1\lambda_2.
\label{eq:elem_conversion2}
\end{equation}

By the signs of the terms in equation~\eqref{eq:X_Gpowersum_poset} we know that $ (-1)^{n-2}[p_{\lambda}]X_G>0$ so 
$$[e_{\lambda}]X_G>[p_{(n)}]X_G\cdot [e_{\lambda}]p_{(n)}.$$
Using equation~\eqref{eq:powersum_to_elem} and equation~\eqref{eq:elem_conversion1} this gives 
$$[e_{\lambda}]X_G>-[e_{(n)}]X_G$$
when $\lambda_1\neq \lambda_2$ and 
$$[e_{\lambda}]X_G>-\frac{[e_{(n)}]X_G}{2}$$
when $\lambda_1=\lambda_2$, which completes the proof. 
\end{proof}

Now we just need some coefficients for the chromatic symmetric functions for stars $S_n$. 

\begin{lemma}
Let $\lambda=(\lambda_1,\lambda_2)$ be an integer partition with parts greater than one. Then, 
\begin{enumerate}[(i)]
\item $[e_{\lambda}]X_{S_n}=-n$ if $\lambda_1\neq \lambda_2$ or
\item $[e_{\lambda}]X_{S_n}=-\frac{n}{2}$ if $\lambda_1= \lambda_2$.
\end{enumerate}
\label{lem:star_coeff}
\end{lemma}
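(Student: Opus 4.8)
The plan is to compute the power-sum expansion of $X_{S_n}$ directly from Stanley's edge-subset formula (Theorem~\ref{thm:X_Gpowersum}), and then feed it into the conversion identities \eqref{eq:elem_conversion1}, \eqref{eq:elem_conversion2} together with the Newtonian identity \eqref{eq:powersum_to_elem} already set up in the proof of Lemma~\ref{lem:nontrees}.

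First I would label the edges of $S_n$ as $\epsilon_1,\dots,\epsilon_{n-1}$, where $\epsilon_j$ joins the center vertex $n$ to the leaf $j$. A subset $S\subseteq E(S_n)$ with $\#S=k$ produces, upon restricting $S_n$ to $S$, a single connected component of size $k+1$ (the center together with its $k$ chosen neighbours) plus $n-1-k$ isolated vertices, so $\lambda(S)=(k+1,1^{n-1-k})$, and there are exactly $\binom{n-1}{k}$ such subsets. Hence
$$X_{S_n}=\sum_{k=0}^{n-1}(-1)^k\binom{n-1}{k}\,p_{(k+1,\,1^{n-1-k})}.$$
The key observation is that every partition occurring here is a hook $(k+1,1^{n-1-k})$. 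In particular, for a partition $\lambda=(\lambda_1,\lambda_2)$ with $\lambda_1\geq\lambda_2\geq 2$ we get $[p_\lambda]X_{S_n}=0$, while $[p_{(n)}]X_{S_n}=(-1)^{n-1}$ (the $k=n-1$ term).

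Since $[p_\lambda]X_{S_n}=0$, equation \eqref{eq:elem_conversion2} collapses to $[e_\lambda]X_{S_n}=[p_{(n)}]X_{S_n}\cdot[e_\lambda]p_{(n)}=(-1)^{n-1}[e_\lambda]p_{(n)}$, so it only remains to read off $[e_\lambda]p_{(n)}$ from the Newtonian identity \eqref{eq:powersum_to_elem}. Writing $\lambda$ in exponential notation $\lambda=(1^{m_1},\dots,n^{m_n})$, we have $l(\lambda)=2$ in both cases, with $\prod_i m_i!=1$ when $\lambda_1\neq\lambda_2$ (then $m_{\lambda_1}=m_{\lambda_2}=1$) and $\prod_i m_i!=2$ when $\lambda_1=\lambda_2$ (then $m_{\lambda_1}=2$). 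Thus $[e_\lambda]p_{(n)}=(-1)^{n-2}n$ in the first case and $(-1)^{n-2}n/2$ in the second; multiplying by $(-1)^{n-1}$ yields $[e_\lambda]X_{S_n}=-n$ and $-n/2$ respectively, as claimed.

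This is essentially a direct calculation, so I do not expect a genuine obstacle; the only point needing a moment's care is justifying that no partition with two parts exceeding $1$ can arise as $\lambda(S)$ for a star. This is immediate: deleting edges of $S_n$ can only detach leaves as singleton components, so the components are always one "clump" around the center plus isolated vertices, never two components each of size at least $2$ (unless the clump is everything, giving the single part $(n)$).
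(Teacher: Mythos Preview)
Your proof is correct and follows essentially the same route as the paper's: both arguments establish that $[p_\lambda]X_{S_n}=0$ for the two-part $\lambda$ in question and then feed this into equations~\eqref{eq:elem_conversion1}, \eqref{eq:elem_conversion2} and~\eqref{eq:powersum_to_elem} exactly as you do. The only cosmetic difference is that the paper appeals to the bond-lattice expansion~\eqref{eq:X_Gpowersum_poset} (no connected partition of $S_n$ has type $(\lambda_1,\lambda_2)$ with both parts exceeding one) to get $[p_\lambda]X_{S_n}=0$, whereas you use the edge-subset formula of Theorem~\ref{thm:X_Gpowersum} to see that only hook partitions arise; these are two sides of the same observation.
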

\begin{proof}
Note that there does not exist a connected partition for $S_n$ of type $\lambda=(\lambda_1,\lambda_2)$ as described, so by equation~\eqref{eq:X_Gpowersum_poset} we can see that $[p_{\lambda}]X_{S_n}=0$. Because equation~\eqref{eq:elem_conversion2} is always true,  using this equation and equations~\eqref{eq:powersum_to_elem} and~\eqref{eq:elem_conversion1} we can calculate 
$$[e_{\lambda}]X_{S_n}=-n$$ when $\lambda_1\neq \lambda_2$ and
\item $$[e_{\lambda}]X_{S_n}=-\frac{n}{2}$$ 
when $\lambda_1= \lambda_2$.
\end{proof}

Now we have all the tools we need to show that stars are independent in ${\mathcal E}_n$. 

\begin{proposition}
The star $S_n$ is an independent element in  ${\mathcal E}_n$ for $n\geq 4$. 
\label{prop:stars_independent}
\end{proposition}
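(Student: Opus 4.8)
The plan is to rule out comparability of $S_n$ with every other element of $\cE_n$ by treating trees and non-trees separately. Since $S_n$ is a tree, Corollary~\ref{cor:trees_max_elem} shows $S_n$ is incomparable with every other tree on $n$ vertices, and Proposition~\ref{prop:trees_maximal_elem} shows nothing lies strictly above $S_n$. Hence the only remaining possibility to exclude is $S_n >_e G$ for some connected non-tree $G$ on $n$ vertices with $X_G\neq X_{S_n}$; it suffices to show that $X_e(S_n,G)$ is never $e$-positive for such a $G$, since the elementary symmetric functions form a basis of $\Lambda$, so any $e$-positive function has all of its $e_\lambda$-coefficients non-negative.

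So suppose $G$ is a connected non-tree on $n\geq 4$ vertices. Put $c=[e_{(n)}]X_G$; by equation~\eqref{eq:e_n_coeff} we have $c>0$, and by Corollary~\ref{cor:X_T_coeff} we have $[e_{(n)}]X_{S_n}=n$, so
$$X_e(S_n,G)=X_{S_n}-\frac{n}{c}\,X_G$$
with $n/c$ a positive real. By Lemma~\ref{lem:nontrees}(i), $G$ admits a connected partition of type $\lambda=(\lambda_1,\lambda_2)$ with $\lambda_1,\lambda_2>1$, and I will show $[e_\lambda]X_e(S_n,G)<0$.

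Extracting the coefficient of $e_\lambda$ gives $[e_\lambda]X_e(S_n,G)=[e_\lambda]X_{S_n}-\tfrac{n}{c}[e_\lambda]X_G$. If $\lambda_1\neq\lambda_2$, then $[e_\lambda]X_{S_n}=-n$ by Lemma~\ref{lem:star_coeff}(i), while Lemma~\ref{lem:nontrees}(ii) gives $[e_\lambda]X_G>-c$; multiplying the latter by the positive number $n/c$ yields $\tfrac{n}{c}[e_\lambda]X_G>-n$, so $[e_\lambda]X_e(S_n,G)<-n-(-n)=0$. If $\lambda_1=\lambda_2$, then $[e_\lambda]X_{S_n}=-\tfrac{n}{2}$ by Lemma~\ref{lem:star_coeff}(ii) and $[e_\lambda]X_G>-\tfrac{c}{2}$ by Lemma~\ref{lem:nontrees}(iii), and the same multiplication gives $\tfrac{n}{c}[e_\lambda]X_G>-\tfrac{n}{2}$, hence again $[e_\lambda]X_e(S_n,G)<0$. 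In either case $X_e(S_n,G)$ has a negative elementary coefficient and so is not $e$-positive, contradicting $S_n>_e G$. Therefore $S_n$ is comparable to no other element of $\cE_n$, i.e. it is independent.

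The main obstacle is essentially bookkeeping: all the real content sits in Lemmas~\ref{lem:nontrees} and~\ref{lem:star_coeff}, which together say that for a non-tree $G$ the coefficient $[e_\lambda]X_G$ strictly beats the threshold $-[e_{(n)}]X_G$ (or its half), while for the star that same threshold is attained exactly — so scaling $X_G$ by the positive factor $n/c$ preserves the strict inequality and pushes the coefficient below zero. The only care needed is to separate the $\lambda_1=\lambda_2$ case, where the relevant constant is halved.
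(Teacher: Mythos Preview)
Your proof is correct and follows essentially the same route as the paper: reduce to showing $S_n\not>_e G$ for non-tree $G$ via Proposition~\ref{prop:trees_maximal_elem}, then pick the two-part connected partition from Lemma~\ref{lem:nontrees} and compare the $e_\lambda$-coefficients using Lemma~\ref{lem:star_coeff} and the strict inequalities of Lemma~\ref{lem:nontrees}(ii)--(iii). Your additional citation of Corollary~\ref{cor:trees_max_elem} is harmless but redundant, since maximality of trees already rules out $S_n>_e T$ for any tree $T$.
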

\begin{proof}
Since all trees are maximal elements by Proposition~\ref{prop:trees_maximal_elem} it suffices to show that stars are minimal elements. Assume that $G$ is a connected graph on $n$ vertices with $S_n>_e G$. We will show $S_n>_e G$ is impossible by showing $X_e(S_n,G)$ is not $e$-positive. The graph $G$ can not be a tree because all trees are maximal, so by Lemma~\ref{lem:nontrees} we know that $G$ has a connected partition of type $\lambda=(\lambda_1,\lambda_2)$ with all parts greater than one. Let us first consider the case when $\lambda_1\neq \lambda_2$. We have in this case using the coefficients calculated in Lemma~\ref{lem:star_coeff} and Corollary~\ref{cor:X_T_coeff} that
$$[e_{\lambda}]X_e(S_n,G)=-n-\frac{n}{[e_{(n)}]X_G}\cdot [e_{\lambda}]X_G.$$
Lemma~\ref{lem:nontrees} implies that this coefficient is negative so $X_e(S_n,G)$ is not $e$-positive in this case and we have a contradiction. Next let us consider the case when $\lambda_1= \lambda_2$. Using the coefficients calculated in Lemma~\ref{lem:star_coeff} and Corollary~\ref{cor:X_T_coeff} we have that
$$[e_{\lambda}]X_e(S_n,G)=-\frac{n}{2}-\frac{n}{[e_{(n)}]X_G}\cdot [e_{\lambda}]X_G.$$
Lemma~\ref{lem:nontrees} implies that this coefficient is negative so $X_e(S_n,G)$ is not $e$-positive in all cases. Thus,  stars are minimal and further are independent elements in $\cE_n$. 
\end{proof}

\begin{corollary}
The poset  ${\mathcal E}_n$ is not a lattice for $n\geq 4$. 
\end{corollary}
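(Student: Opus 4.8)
The plan is to show that $\cE_n$ fails to be a lattice for $n \geq 4$ by exhibiting two elements whose set of common lower bounds (or common upper bounds) has no greatest (respectively least) member. The cleanest route is to use the star graphs: we have just shown in Proposition~\ref{prop:stars_independent} that for $n \geq 4$ the star $S_n$ is an independent element, i.e. it is related to no other element of $\cE_n$. So first I would recall that $\cE_n$ for $n \geq 4$ contains at least two elements --- indeed it contains $K_n$, which is a minimal element by Corollary~\ref{cor:K_n_min_elem}, and $S_n$, which is independent --- and these are distinct since $X_{K_n} = n!e_{(n)} \neq X_{S_n}$ (for instance by comparing the coefficient of $e_{(n)}$, which is $n! \neq n$ by Corollary~\ref{cor:X_T_coeff} and Proposition~\ref{prop:X_K_n}, using $n \geq 4$).

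Next I would invoke the standard fact that a finite lattice has a $\hat 0$ (a global minimum) and a $\hat 1$ (a global maximum): in a lattice the meet of all elements exists and is below everything, and dually for the join. But in $\cE_n$ no element can be a global minimum or maximum once $n \geq 4$, because $S_n$ is comparable to nothing: if $x$ were a $\hat 0$ then $x \leq_e S_n$, forcing $x = S_n$ (as $S_n$ is independent), yet then $S_n = x \leq_e K_n$ would force $S_n = K_n$, a contradiction; and symmetrically for a $\hat 1$. Hence $\cE_n$ has neither a $\hat 0$ nor a $\hat 1$, so it is not a lattice.

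Actually the argument can be phrased even more directly using the poset $\cE_4$ depicted in Figure~\ref{fig:posets}, or the general structure just established: $\cE_n$ has the minimal element $K_n$ and the maximal element given by any tree (Corollary~\ref{cor:K_n_min_elem}, Proposition~\ref{prop:trees_maximal_elem}), together with the independent element $S_n$; since $S_n$ differs from both of these, any candidate meet or join of $S_n$ with $K_n$ would have to be a common bound comparable to $S_n$, which is impossible. I expect no genuine obstacle here --- the only points requiring a word of care are (a) confirming $\cE_n$ really has more than one element so that the absence of a top/bottom is meaningful, which is immediate for $n \geq 4$ since $S_n$ and $K_n$ have distinct chromatic symmetric functions, and (b) citing correctly that every finite lattice possesses a global minimum and maximum, for which one refers to~\cite{stanley1}. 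The whole proof is therefore a short deduction from Proposition~\ref{prop:stars_independent} and the existence of the distinct element $K_n$.
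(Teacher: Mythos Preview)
Your proof is correct and follows essentially the same approach as the paper: both deduce the result directly from Proposition~\ref{prop:stars_independent}, using that a lattice with more than one element cannot contain an independent element. The paper states this last fact in a single sentence, while you spell it out via the existence of $\hat 0$ and $\hat 1$ and the distinct element $K_n$, but the underlying idea is identical.
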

\begin{proof}
Lattices do not have independent elements, and we have shown by  Proposition~\ref{prop:stars_independent} that the poset ${\mathcal E}_n$ for $n\geq 4$ has an independent element. 
\end{proof}

Our last result about $\cE_n$ we will prove in this section is that the family of lollipop graphs form a chain with the complete graph as the minimal element and the path as the maximal element. To prove this we will use the following formulas for the chromatic symmetric functions and polynomials of lollipop graphs. 

\begin{theorem}[\cite{DvW} Theorem 7 and Lemma 11]
The chromatic polynomial of a lollipop $L_{m,n}$  is
$$\chi_{L_{m,n}}(k)=k(k-1)^{n+1}(k-2)(k-3)\cdots (k-(m-1))$$
and the chromatic symmetric function satisfies
$$X_{L_{m,n}}=(m-1)X_{L_{m-1,n+1}}-(m-2)X_{K_{m-1}}X_{P_{n+1}}.$$
\label{thm:lollipop}
\end{theorem}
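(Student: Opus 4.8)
The plan: the two parts (already \cite{DvW} Theorem~7 and Lemma~11) are proved as follows.

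\textbf{The chromatic polynomial.} First I would note that the clique-vertex $m$ is a cut vertex of $L_{m,n}$, separating the complete graph $K_m$ on $[m]$ from the pendant path on $\{m,m+1,\dots,N\}$. Since $L_{m,n}$ is $K_m$ and $P_{n+1}$ glued at the single vertex $m$, block multiplicativity of the chromatic polynomial gives
$$\chi_{L_{m,n}}(k)=\frac{\chi_{K_m}(k)\,\chi_{P_{n+1}}(k)}{k}=\frac{\bigl(k(k-1)\cdots(k-m+1)\bigr)\bigl(k(k-1)^{n}\bigr)}{k}=k(k-1)^{n+1}(k-2)\cdots(k-(m-1)).$$
Equivalently one colours greedily in the vertex order $1,2,\dots,N$: vertex $i\le m$ lies in the clique and has $k-(i-1)$ admissible colours, while vertex $i>m$ is constrained only by its predecessor and has $k-1$ admissible colours, yielding the same product.

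\textbf{The chromatic symmetric function recursion.} This is the substantive half. My plan is to expand all three chromatic symmetric functions in the power-sum basis via Stanley's formula (Theorem~\ref{thm:X_Gpowersum}), $X_G=\sum_{S\subseteq E(G)}(-1)^{\#S}p_{\lambda(S)}$, and to compare power-sum coefficients. For $L_{m,n}$ I would write an edge subset as $S=A\sqcup B$ with $A\subseteq E(K_m)$ and $B$ a subset of the $n$ path edges, record $r=r(B)$, the length of the initial run of path edges contained in $B$ (so that after restricting $L_{m,n}$ to $S$ the component of the cut vertex $m$ is its $K_m$-component enlarged by $r$, while the remaining runs of $B$ contribute fixed path parts), and sum over $A$ first. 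The inner sum over $A$ is a ``padded'' version of $X_{K_m}$, and the recursion reduces to a clean identity comparing these padded sums for $K_m$ and for $K_{m-1}$, together with the observation that disconnecting $m$ from the clique turns the clique part into $X_{K_{m-1}}$ times $X_{P_{n+1}}$. A likely cleaner alternative is to induct on $m$ and apply the triple-deletion (modular) relation for chromatic symmetric functions to an edge of the clique of $L_{m,n}$, which rewrites $X_{L_{m,n}}$ in terms of $X_{L_{m-1,n+1}}$ and of chromatic symmetric functions that factor through the cut vertex $m$ as $X_{K_{m-1}}X_{P_{n+1}}$.

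\textbf{Main obstacle and checks.} The chromatic-polynomial half is routine. The difficulty is entirely in the recursion: since chromatic symmetric functions do \emph{not} obey deletion--contraction, the nontrivial coefficients $m-1$ and $m-2$ cannot be read off from a single edge deletion and must be produced either by careful bookkeeping in the power-sum expansion --- tracking exactly how each choice of clique-edges and path-edges merges at the cut vertex --- or by a suitable triple-deletion relation. As sanity checks I would verify the base case $m=2$, where $L_{2,n}=P_{n+2}=L_{1,n+1}$ and the correction term $(m-2)X_{K_{m-1}}X_{P_{n+1}}$ vanishes, and the case $n=0$, where $X_{L_{m,0}}=X_{K_m}=m!\,e_{(m)}$ must equal $(m-1)X_{L_{m-1,1}}-(m-2)X_{K_{m-1}}e_{1}$.
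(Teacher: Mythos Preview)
The paper does not prove Theorem~\ref{thm:lollipop}; it quotes it from \cite{DvW}, so there is no in-paper proof to compare against.  I will therefore judge your sketch on its own.

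Your argument for the chromatic polynomial is complete and correct: the cut-vertex factorisation (or the greedy count along the order $1,2,\dots,N$) immediately gives $k(k-1)^{n+1}(k-2)\cdots(k-(m-1))$.

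For the recursion $X_{L_{m,n}}=(m-1)X_{L_{m-1,n+1}}-(m-2)X_{K_{m-1}}X_{P_{n+1}}$ you correctly isolate the difficulty and point at two viable routes, but you stop before producing the coefficients $m-1$ and $-(m-2)$.  The power-sum bookkeeping can be made to work but is heavy, and applying the triple-deletion relation to \emph{one} clique edge gives a three-term identity, not the desired two-term formula; you would still have to explain how iterating it collapses to these exact coefficients.  That collapse is the content of the lemma, and your sketch does not yet supply it.

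The gap closes in one step if you argue directly with colorings, which is really the modular-relation idea carried out globally rather than edge by edge.  Let $H_0=K_{m-1}\sqcup P_{n+1}$ (delete all edges $\{j,m\}$, $j\in[m-1]$), and for $j\in[m-1]$ let $H_j=H_0+\{j,m\}$, so each $H_j\cong L_{m-1,n+1}$.  A proper coloring $\kappa$ of $H_0$ gives $[m-1]$ distinct colors, so $\kappa(m)$ agrees with at most one $\kappa(j_0)$.  If it agrees with none, $\kappa$ is proper for $G=L_{m,n}$ and for all $m-1$ of the $H_j$; if it agrees with exactly one, $\kappa$ is proper for $m-2$ of the $H_j$ and not for $G$.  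Summing monomials yields
\[
\sum_{j=1}^{m-1}X_{H_j}=(m-1)X_G+(m-2)\bigl(X_{H_0}-X_G\bigr)=X_G+(m-2)X_{H_0},
\]
which rearranges to the stated recursion.  This is the missing step that your ``triple-deletion/modular'' alternative was aiming at; with it, your plan becomes a complete proof.
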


The above theorem can be applied to give us the coefficient of $e_{(m+n)}$ for a lollipop graph. 

\begin{proposition}The coefficient of $e_{(m+n)}$ for the lollipop graph is
$$[e_{{(m+n)}}]X_{L_{m,n}}=(m+n) (m-1)!.$$
\label{prop:lollipop_coeff}
\end{proposition}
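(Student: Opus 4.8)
The plan is to compute $[e_{(m+n)}]X_{L_{m,n}}$ directly from the chromatic polynomial formula in Theorem~\ref{thm:lollipop} together with equation~\eqref{eq:e_n_coeff}, which says $[e_{(N)}]X_G = (-1)^{N-1} N \cdot [k]\chi_G(k)$ where $N = m+n$. So the whole problem reduces to extracting the coefficient of $k^1$ in the polynomial
$$\chi_{L_{m,n}}(k) = k(k-1)^{n+1}(k-2)(k-3)\cdots(k-(m-1)).$$

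First I would observe that the coefficient of $k$ in this product is obtained by taking the constant term of every factor except the leading factor $k$ (from which we must take the $k$ itself). The constant term of $(k-1)^{n+1}$ is $(-1)^{n+1}$, and the constant term of $(k-j)$ is $-j$ for $j = 2, 3, \ldots, m-1$. Hence
$$[k]\chi_{L_{m,n}}(k) = (-1)^{n+1} \cdot (-2)(-3)\cdots(-(m-1)) = (-1)^{n+1} (-1)^{m-2} (m-1)! = (-1)^{m+n-1}(m-1)!.$$
Then by equation~\eqref{eq:e_n_coeff},
$$[e_{(m+n)}]X_{L_{m,n}} = (-1)^{m+n-1}(m+n) \cdot (-1)^{m+n-1}(m-1)! = (m+n)(m-1)!,$$
since $(-1)^{2(m+n-1)} = 1$. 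I should also briefly check the boundary cases: when $m=1$ the empty product $(k-2)\cdots(k-(m-1))$ is $1$ and the formula reads $(1+n)\cdot 0! = n+1$, consistent with $L_{1,n} = P_{n+1}$ being a tree on $n+1$ vertices (Corollary~\ref{cor:X_T_coeff} gives $[e_{(n+1)}]X_{P_{n+1}} = n+1$); and when $n=0$ we get $[e_{(m)}]X_{K_m} = m\cdot(m-1)! = m!$, matching Proposition~\ref{prop:X_K_n}.

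There is really no main obstacle here — the argument is a one-line coefficient extraction followed by a sign bookkeeping check. The only point requiring a modicum of care is tracking the parity: the sign $(-1)^{m+n-1}$ appears once from equation~\eqref{eq:e_n_coeff} and once from the constant-term product, and these cancel, so the final answer is manifestly positive, as it must be by Theorem~\ref{thm:S(G,1)}. Alternatively, one could derive the same result recursively from the second identity in Theorem~\ref{thm:lollipop} using induction on $m$ together with the known values $[e_{(m+n)}]X_{K_{m-1}}X_{P_{n+1}}$, but the direct computation from the chromatic polynomial is cleaner and I would present that.
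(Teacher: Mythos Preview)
Your proof is correct and follows exactly the same approach as the paper: apply equation~\eqref{eq:e_n_coeff} to the explicit chromatic polynomial from Theorem~\ref{thm:lollipop} and extract the coefficient of $k$. Your version is in fact more detailed than the paper's, which compresses the sign bookkeeping into a single displayed line; your explicit tracking of the parity and the sanity checks at $m=1$ and $n=0$ are welcome additions.
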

\begin{proof}
By equation~\ref{eq:e_n_coeff} we can use the chromatic polynomial of $L_{m,n}$ given in Theorem~\ref{thm:lollipop} to calculate the coefficient as
\begin{align*}
[e_{{(m+n)}}]X_{L_{m,n}}&=(-1)^{m+n-1}(m+n) \cdot [k]\left(k(k-1)^{n+1}(k-2)(k-3)\cdots (k-(m-1))\right)\\
&=(m+n) (m-1)!
\end{align*}
so we are done.
\end{proof}

\begin{theorem}
The family of lollipop graphs $\{L_{m,n}\}$ on $m+n=N$ vertices forms a chain in  ${\mathcal E}_N$. In particular, for $m\geq 3$ and $n\geq 0$,
$$L_{m-1,n+1}\geq_e L_{m,n}.$$
Hence, the path $P_N$ is the maximal element and the complete graph $K_N$ is the minimal element of the chain. 
\label{thm:lollipop_e-chain}
\end{theorem}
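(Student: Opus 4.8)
The plan is to use the recursion from Theorem~\ref{thm:lollipop} together with the $e_{(N)}$-coefficient formula in Proposition~\ref{prop:lollipop_coeff} to show directly that $X_e(L_{m-1,n+1}, L_{m,n})$ is $e$-positive for all $m\geq 3$, $n\geq 0$; the chain structure and the identification of the endpoints will then follow from transitivity since $L_{1,N-1}=P_N$ (after re-indexing, the path) and $L_{N,0}=K_N$. First I would compute the scaling factor: by Proposition~\ref{prop:lollipop_coeff},
$$\frac{[e_{(N)}]X_{L_{m-1,n+1}}}{[e_{(N)}]X_{L_{m,n}}}=\frac{N\,(m-2)!}{N\,(m-1)!}=\frac{1}{m-1},$$
so that
$$X_e(L_{m-1,n+1},L_{m,n})=X_{L_{m-1,n+1}}-\frac{1}{m-1}X_{L_{m,n}}.$$
Now substitute the recursion $X_{L_{m,n}}=(m-1)X_{L_{m-1,n+1}}-(m-2)X_{K_{m-1}}X_{P_{n+1}}$ from Theorem~\ref{thm:lollipop}. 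This gives
$$X_e(L_{m-1,n+1},L_{m,n})=X_{L_{m-1,n+1}}-\frac{1}{m-1}\Big((m-1)X_{L_{m-1,n+1}}-(m-2)X_{K_{m-1}}X_{P_{n+1}}\Big)=\frac{m-2}{m-1}\,X_{K_{m-1}}X_{P_{n+1}}.$$

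The point is that the recursion was engineered so that the weighted difference collapses to a single clean product. Since $m\geq 3$, the coefficient $(m-2)/(m-1)$ is a positive rational. By Example~\ref{ex:complete_graph}, $X_{K_{m-1}}=(m-1)!\,e_{(m-1)}$ is $e$-positive, and by Remark~\ref{re:e-pos_graphs} the path $P_{n+1}$ is $e$-positive; since the elementary basis is multiplicative (Remark~\ref{re:disjoint}), the product $X_{K_{m-1}}X_{P_{n+1}}$ is $e$-positive, hence so is the positive scalar multiple above. Moreover it is nonzero (the product of two nonzero $m$-positive functions is nonzero), so in fact $L_{m-1,n+1}>_e L_{m,n}$ strictly, and in particular $L_{m-1,n+1}$ and $L_{m,n}$ have distinct chromatic symmetric functions, so they are genuinely distinct elements of $\mathcal{G}_N$.

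Finally I would assemble the chain: iterating $L_{m-1,n+1}\geq_e L_{m,n}$ from $m=N$ down to $m=2$ (noting $L_{2,N-2}\geq_e\cdots$, and that $L_{1,N-1}$ is the path $P_N$ while $L_{N,0}=K_N$), transitivity of the poset order gives
$$P_N \;\geq_e\; L_{2,N-2}\;\geq_e\;\cdots\;\geq_e\; L_{N-1,1}\;\geq_e\; K_N,$$
so $\{L_{m,n}\}$ is a totally ordered subset of $\mathcal{E}_N$ with $K_N$ minimal and $P_N$ maximal in it. I expect no serious obstacle here: the only mild care needed is handling the low-$m$ boundary (the recursion requires $m\geq 3$, and for $m=2$ one checks $L_{2,n}$ versus $L_{1,n+1}=P_{n+2}$ separately, or observes that the stated range $m\geq 3$ already chains $P_N=L_{1,N-1}$ down to $L_{N-1,1}$ and then $K_N=L_{N,0}$ is reached by the $m=N$ instance), and confirming that the re-indexing $L_{1,N-1}\cong P_N$ and $L_{N,0}\cong K_N$ matches the paper's definitions.
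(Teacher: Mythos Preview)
Your proof is correct and follows essentially the same approach as the paper's: compute the scaling factor via Proposition~\ref{prop:lollipop_coeff}, substitute the recursion of Theorem~\ref{thm:lollipop} to obtain $X_e(L_{m-1,n+1},L_{m,n})=\frac{m-2}{m-1}X_{K_{m-1}}X_{P_{n+1}}$, and conclude $e$-positivity from the known $e$-positivity of complete graphs and paths together with multiplicativity. Your extra remarks on strictness and the low-$m$ boundary are fine but not needed, since $L_{2,N-2}=P_N$ already and the theorem's stated range $m\geq 3$ covers the full chain from $P_N$ down to $K_N$.
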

\begin{proof}
It suffices to show $L_{m-1,n+1}\geq_e L_{m,n}$ by transitivity for $m\geq 3$ and $n\geq 0$. Using the coefficients calculated in Proposition~\ref{prop:lollipop_coeff} and the formula in Theorem~\ref{thm:lollipop} we have
\begin{align*}
X_e(L_{m-1,n+1}, L_{m,n})&=X_{L_{m-1,n+1}}-\frac{1}{m-1}X_{L_{m,n}}\\
&=X_{L_{m-1,n+1}}-\frac{1}{m-1}\left((m-1)X_{L_{m-1,n+1}}-(m-2)X_{K_{m-1}}X_{P_{n+1}}\right)\\
&=\frac{m-2}{m-1}X_{K_{m-1}}X_{P_{n+1}},
\end{align*}
which is known to be $e$-positive by Example~\ref{ex:complete_graph},  Remark~\ref{re:e-pos_graphs} and that the multiplication of two $e$-positive functions is $e$-positive.  \end{proof}

\section{Properties of the chromatic Schur-positivity poset}
\label{sec:schur}

In this section we determine properties of $\cS_n$.  These properties parallel the properties we have already proven for $\cE_n$. In Theorem~\ref{thm:G>K_n_schur} we proved that $G\geq_s K_n$ if and only if $G$ is Schur-positive, and we will further prove that $K_n$ is  a minimal element. We will find that as we increase in $\cS_n$ that the   independence number, $\alpha(G)$, increases, the number of acyclic orientations decreases, and the chromatic number decreases. Also, we will find that the trees with distinct chromatic symmetric functions form an anti-chain and are maximal elements. In particular, the stars $S_n$ are independent elements, so $\cS_n$ also cannot be a lattice. Lastly, we prove that the family of  lollipop graphs is a chain with the complete graph as the minimal element and the path as the maximal element, however, the proof is more complex than in the case of $\cE_n$. We present the theorem and proof in Section~\ref{sec:schur_lollipop}. 

First we consider several common statistics on graphs and discover if there is a consistent relationship to the relations in $\cS_n$. 

\begin{proposition}
If $G\geq_s H$ then $\alpha(G)\geq \alpha(H)$.
\label{prop:alpha_schur}
\end{proposition}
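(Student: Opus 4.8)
The plan is to mirror the proof of Proposition~\ref{prop:alpha} essentially verbatim, replacing the $e$-positivity argument with the Schur-positivity argument via the monomial basis. First I would suppose for contradiction that $\alpha(G) < \alpha(H)$ and set $\lambda = (\alpha(H), 1^{n-\alpha(H)}) \vdash n$. By definition of the independence number, $H$ admits a stable partition of type $\lambda$ (take a maximum independent set as one block, singletons for the rest), while $G$ admits no stable partition of type $\lambda$, since every block of size $\alpha(H) > \alpha(G)$ would have to be independent in $G$, which is impossible. Hence by Theorem~\ref{thm:X_Gmonomial}, $[m_\lambda]X_H \geq 1$ and $[m_\lambda]X_G = 0$.

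Next I would examine the coefficient $[m_\lambda]X_s(G,H) = [m_\lambda]X_G - \frac{[s_{(1^n)}]X_G}{[s_{(1^n)}]X_H}[m_\lambda]X_H = -\frac{[s_{(1^n)}]X_G}{[s_{(1^n)}]X_H}[m_\lambda]X_H$. The key point is that the scaling factor is strictly positive: by equation~\eqref{eq:s_1^n_coeff} together with Theorem~\ref{thm:s_hook_coeff} and Theorem~\ref{thm:S(G,1)}, we have $[s_{(1^n)}]X_G > 0$ and $[s_{(1^n)}]X_H > 0$ for any connected graph on $n \geq 2$ vertices (it counts acyclic orientations, and $S(G,1) > 0$ forces this positive). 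Therefore $[m_\lambda]X_s(G,H) < 0$. By Remark~\ref{remark:mpos}, a function that is not $m$-positive cannot be Schur-positive, so $X_s(G,H)$ is not Schur-positive, contradicting $G \geq_s H$. Hence $\alpha(G) \geq \alpha(H)$.

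I do not anticipate a genuine obstacle here — the argument is a direct transcription of the $\cE_n$ case, and the only thing to be slightly careful about is confirming the positivity of the scaling factor $[s_{(1^n)}]X_G / [s_{(1^n)}]X_H$, which is immediate from the acyclic-orientation interpretation of $[s_{(1^n)}]X_G$ in Theorem~\ref{thm:s_hook_coeff} (the number of acyclic orientations of a graph with at least one edge is positive). One should also note the edge case $n=1$, where $\alpha$ is trivially equal and there is nothing to prove, or simply observe that $\cS_n$ for such small $n$ has no nontrivial relations.
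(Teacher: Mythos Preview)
Your proposal is correct and follows essentially the same argument as the paper's own proof: assume $\alpha(G)<\alpha(H)$, set $\lambda=(\alpha(H),1^{n-\alpha(H)})$, use Theorem~\ref{thm:X_Gmonomial} to get $[m_\lambda]X_G=0$ and $[m_\lambda]X_H\geq 1$, invoke Theorem~\ref{thm:s_hook_coeff} for positivity of the scaling factor, and conclude via Remark~\ref{remark:mpos} that $X_s(G,H)$ is not Schur-positive. The only cosmetic difference is that you spell out the scaling-factor positivity and the $n=1$ edge case a bit more carefully than the paper does.
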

\begin{proof}
Suppose $\alpha(G) < \alpha(H)$ and let $\lambda$ be the partition of $n$ given by $(\alpha(H), 1^{n-\alpha(H)}).$ By definition, $H$ has a  stable partition of type $\lambda$ but $G$ does not. Hence, by the expansion of $X_G$ and $X_H$ into the monomial basis, Theorem~\ref{thm:X_Gmonomial}, we have $[m_\lambda] X_G = 0$ but $[m_\lambda] X_H \geq 1$ since the coefficient of $m_\lambda$ in $X_G$  is a multiple of the number of stable partitions of type $\lambda$ in $G$. Therefore, $[m_\lambda] X_s(G,H) < 0$ since Theorem~\ref{thm:s_hook_coeff} implies that the scaling factor is always positive. Hence by Remark~\ref{remark:mpos}, $X_s(G,H)$ cannot be Schur-positive as $X_s(G,H)$ is not $m$-positive. 
\end{proof}

\begin{proposition}
If $G>_s H$ then the number of acyclic orientations for $G$ is less than the number of acyclic orientations for $H$.
\end{proposition}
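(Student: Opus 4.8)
The plan is to mirror the proof of Proposition~\ref{prop:S(G,1)<S(H,1)}, replacing the coefficient of $e_{(n)}$ with the coefficient of $s_{(1^n)}$. Suppose $G>_s H$, so that $X_s(G,H)\neq 0$ is Schur-positive. First I would compute the coefficient of $p_{(1^n)}$ in $X_s(G,H)$. By Theorem~\ref{thm:X_Gpowersum} the only edge subset contributing a $p_{(1^n)}$ term is $S=\emptyset$, so $[p_{(1^n)}]X_G=[p_{(1^n)}]X_H=1$, and therefore
\begin{equation*}
[p_{(1^n)}]X_s(G,H)=1-\frac{[s_{(1^n)}]X_G}{[s_{(1^n)}]X_H}.
\end{equation*}
Here I am using that $[s_{(1^n)}]X_H>0$, which follows from Theorem~\ref{thm:s_hook_coeff} together with Theorem~\ref{thm:S(G,1)} (or simply because $[s_{(1^n)}]X_H=(-1)^n\chi_H(-1)$ counts acyclic orientations and $H$ has at least one).

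Next, since $X_s(G,H)$ is a non-zero Schur-positive function in $\Lambda^n$, Lemma~\ref{lem:conversion_note} gives $[p_{(1^n)}]X_s(G,H)>0$. Combining this with the displayed identity yields $[s_{(1^n)}]X_G<[s_{(1^n)}]X_H$. Finally, by equation~\eqref{eq:s_1^n_coeff} we have $[s_{(1^n)}]X_G=(-1)^n\chi_G(-1)$ and likewise for $H$, and by Theorem~\ref{thm:s_hook_coeff} this quantity equals the total number of acyclic orientations of the respective graph. Hence the number of acyclic orientations of $G$ is strictly less than that of $H$, as claimed.

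I do not anticipate a genuine obstacle here; the argument is a direct adaptation. The only point requiring the smallest care is confirming the strictness, i.e.\ that $X_s(G,H)\neq 0$ forces the $p_{(1^n)}$ coefficient to be \emph{strictly} positive rather than merely nonnegative — but this is exactly the content of Lemma~\ref{lem:conversion_note}, which states that a non-zero Schur-positive function has strictly positive $p_{(1^n)}$ coefficient. One should also be sure to invoke the relation $>_s$ (strict) rather than $\geq_s$ precisely so that $X_s(G,H)$ is non-zero; under the weak relation one would only get $\leq$ on the orientation counts.
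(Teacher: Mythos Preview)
Your proposal is correct and follows essentially the same argument as the paper: both compute $[p_{(1^n)}]X_s(G,H)=1-\tfrac{[s_{(1^n)}]X_G}{[s_{(1^n)}]X_H}$ via Theorem~\ref{thm:X_Gpowersum}, invoke Lemma~\ref{lem:conversion_note} to force this to be strictly positive, and then appeal to Theorem~\ref{thm:s_hook_coeff} to interpret $[s_{(1^n)}]$ as the acyclic-orientation count. Your extra remarks on the positivity of $[s_{(1^n)}]X_H$ and on strictness are accurate and slightly more detailed than the paper's version, but not substantively different.
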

\begin{proof}
Let $G>_s H$ so $X_s(G,H)\neq 0$ is Schur-positive. By Theorem~\ref{thm:X_Gpowersum} we can see that $[p_{(1^n)}]X_G=[p_{(1^n)}]X_H=1$ so we have that
$$[p_{(1^n)}]X_s(G,H)=1-\frac{[s_{(1^n)}]X_G}{[s_{(1^n)}]X_H}.$$
By Lemma~\ref{lem:conversion_note} this coefficient is positive so $[s_{(1^n)}]X_G<[s_{(1^n)}]X_H$. By Theorem~\ref{thm:s_hook_coeff} we have our result. 
\end{proof}

\begin{proposition}
If $G \geq_s H$, then $\chi(G) \leq \chi(H)$.
\end{proposition}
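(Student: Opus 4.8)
The plan is to mirror the proof of Proposition~\ref{prop:alpha_schur}, replacing the role of the independence number by the chromatic number and invoking the monomial expansion together with Remark~\ref{remark:mpos}. Specifically, I would argue by contradiction: suppose $G \geq_s H$ but $\chi(G) > \chi(H)$.

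First I would use the fact that $H$ admits a proper coloring with exactly $\chi(H)$ colors, hence a stable partition of $V(H)$ of some type $\lambda \vdash n$ with $l(\lambda) = \chi(H)$. By Theorem~\ref{thm:X_Gmonomial} this gives $[m_\lambda] X_H \geq 1$. Next I would observe that $[m_\lambda] X_G = 0$: since $l(\lambda) = \chi(H) < \chi(G)$, any stable partition of $V(G)$ must have at least $\chi(G) > l(\lambda)$ blocks, so $G$ has no stable partition of type $\lambda$, and again by Theorem~\ref{thm:X_Gmonomial} the coefficient vanishes. Then, since the scaling factor $[s_{(1^n)}]X_G / [s_{(1^n)}]X_H$ is positive by Theorem~\ref{thm:s_hook_coeff} (equivalently equation~\eqref{eq:s_1^n_coeff} together with $(-1)^n\chi_G(-1) > 0$), we get
$$[m_\lambda] X_s(G,H) = [m_\lambda]X_G - \frac{[s_{(1^n)}]X_G}{[s_{(1^n)}]X_H}[m_\lambda]X_H = -\frac{[s_{(1^n)}]X_G}{[s_{(1^n)}]X_H}[m_\lambda]X_H < 0.$$
By Remark~\ref{remark:mpos}, a function with a negative monomial coefficient is not $m$-positive, hence not Schur-positive, so $X_s(G,H)$ is not Schur-positive, contradicting $G \geq_s H$.

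There is essentially no obstacle here — the argument is a routine adaptation of Proposition~\ref{prop:alpha_schur}, and in fact the entire excerpt already contains the exact analog for $\cE_n$ (the proposition immediately before Corollary~\ref{cor:K_n_min_elem}), whose proof transfers verbatim upon swapping $X_e$ for $X_s$ and equation~\eqref{eq:e_n_coeff} for Theorem~\ref{thm:s_hook_coeff}. The only point requiring a word of care is the reduction from "$\chi(G) > \chi(H)$" to "$G$ has no stable partition of type $\lambda$": this is immediate since the blocks of a stable partition are exactly the color classes of a proper coloring, so a stable partition with $k$ blocks yields a proper coloring with $k$ colors, forcing $k \geq \chi(G)$.
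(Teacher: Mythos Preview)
Your proposal is correct and follows essentially the same argument as the paper: both assume $\chi(G) > \chi(H)$, pick a stable partition type $\lambda$ of length $\chi(H)$ realized by $H$, observe $[m_\lambda]X_G = 0$ while $[m_\lambda]X_H \geq 1$, use Theorem~\ref{thm:s_hook_coeff} for positivity of the scaling factor, and conclude via Remark~\ref{remark:mpos} that $X_s(G,H)$ is not Schur-positive. Your write-up is in fact slightly more explicit than the paper's in justifying why $[m_\lambda]X_G = 0$.
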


\begin{proof}
Suppose $\chi(G) > \chi(H)$. Since there is a coloring of $H$ with $\chi(H)$ colors, then $H$ has a stable partition of some type $\lambda \vdash n$ with length $\chi(H)$ 
and hence, we have $[m_\lambda] X_H \geq 1$ by Theorem~\ref{thm:X_Gmonomial}. Furthermore, $[m_\lambda] X_G = 0$ since by definition, $G$ cannot be colored with fewer than $\chi(G)$ colors and by assumption $\chi(G) > \chi(H)$. Therefore, $[m_\lambda] X_s(G,H) < 0$ since Theorem~\ref{thm:s_hook_coeff} implies that the scaling factor is always positive. Hence by Remark~\ref{remark:mpos}, $X_s(G,H)$ cannot be Schur-positive  as  $X_{{s}}(G,H)$ is  not $m$-positive. 
\end{proof}

\begin{remark}
Note that there is no consistent relationship between  clique numbers $\omega(G)$ and relations in the poset $\cS_n$. We find that $P_5>_s L_{3,2}>_s C_5$ but $\omega(P_5)<\omega(L_{3,2})>\omega(C_5)$. 
\end{remark}

We have proven in Theorem~\ref{thm:G>K_n_schur} that $G$ is Schur-positive if and only of $G\geq_s K_n$.  Now we will prove that the complete graph is a minimal element. 

\begin{corollary}
The complete graph $K_n$ is a minimal element in ${\mathcal S}_n$. 
\end{corollary}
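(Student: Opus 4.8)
The plan is to mirror the argument already used for $\cE_n$ in Corollary~\ref{cor:K_n_min_elem}, replacing the $e$-positivity criterion by its Schur analogue. Concretely, I would show that whenever $G \neq K_n$ is a connected graph on $n$ vertices, we have $K_n \not\geq_s G$, which immediately makes $K_n$ minimal in $\cS_n$.

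The key step is an appeal to Proposition~\ref{prop:alpha_schur}, which states that $G' \geq_s H'$ forces $\alpha(G') \geq \alpha(H')$. First I would observe that $K_n$ has independence number $\alpha(K_n) = 1$, since every pair of vertices is joined by an edge. Next, if $G$ is a connected graph on $n$ vertices with $G \neq K_n$, then $G$ is missing at least one edge, say between vertices $u$ and $v$; the set $\{u,v\}$ is then an independent set of size $2$, so $\alpha(G) \geq 2 > 1 = \alpha(K_n)$. By the contrapositive of Proposition~\ref{prop:alpha_schur}, $K_n \not\geq_s G$. Since this holds for every connected $G \neq K_n$ on $n$ vertices, the only element of $\cS_n$ that $K_n$ could possibly dominate is its own equivalence class, so $K_n$ is a minimal element of $\cS_n$.

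I do not anticipate any real obstacle here: the proof is a verbatim transcription of the $\cE_n$ case, with Proposition~\ref{prop:alpha} swapped for Proposition~\ref{prop:alpha_schur}. The only point requiring a word of care is that Proposition~\ref{prop:alpha_schur} is stated for the weak relation $\geq_s$, so applying its contrapositive directly rules out $K_n \geq_s G$ (not merely $K_n >_s G$); this is exactly what "minimal element" requires in the sense defined in Section~\ref{sec:background}. One could alternatively note, as an aside, that this is consistent with Theorem~\ref{thm:G>K_n_schur}: a graph lies above $K_n$ precisely when it is Schur-positive, and $K_n$ itself sits at the bottom of that part of the poset.

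If one wanted a completely self-contained argument avoiding Proposition~\ref{prop:alpha_schur}, the alternative is to take $\lambda = (2, 1^{n-2})$ and use Theorem~\ref{thm:X_Gmonomial}: since $K_n$ has no stable partition of type $\lambda$ but $G$ does, $[m_\lambda]X_{K_n} = 0$ while $[m_\lambda]X_G \geq 1$, whence $[m_\lambda]X_s(K_n, G) = -\,\tfrac{[s_{(1^n)}]X_{K_n}}{[s_{(1^n)}]X_G}[m_\lambda]X_G < 0$ because the scaling factor is positive by Theorem~\ref{thm:s_hook_coeff}; then Remark~\ref{remark:mpos} shows $X_s(K_n,G)$ is not $m$-positive, hence not Schur-positive. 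Either route works, and I would present the first, shorter one.
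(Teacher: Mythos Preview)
Your proposal is correct and follows essentially the same argument as the paper: both use Proposition~\ref{prop:alpha_schur} and the observation that any connected $G\neq K_n$ has $\alpha(G)\geq 2>1=\alpha(K_n)$ to conclude $K_n\not\geq_s G$. The alternative self-contained route you sketch via $[m_{(2,1^{n-2})}]$ is exactly the content of the proof of Proposition~\ref{prop:alpha_schur} specialized to this situation, so there is no real divergence.
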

\begin{proof}
If $G\neq K_n$ is a connected graph on $n$ vertices then there are two vertices $u$ and $v$ without an edge between them. The set $\{u,v\}$ is an independent set and  $\alpha(G)\geq 2$. By Proposition~\ref{prop:alpha_schur} we know $K_n\not\geq_s G$ because $1=\alpha(K_n)<\alpha(G)$. Hence $K_n$ is a minimal element. 
\end{proof}

Similar to Theorem~\ref{thm:elem_antichain} there is an elegant condition that can generate many  anti-chains in $\cS_n$. 

\begin{theorem}
Let $\{G_1, G_2, \ldots, G_k\}$ be some set of connected graphs on $n$ vertices with equal $s_{(1^n)}$ coefficients, $[s_{(1^n)}]X_{G_i}=[s_{(1^n)}]X_{G_j}$, and distinct chromatic symmetric functions. Then 
$\{G_1, G_2, \ldots, G_k\}$ is an anti-chain  in  ${\mathcal S}_n$. 
\label{thm:schur_antichain}
\end{theorem}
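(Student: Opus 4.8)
The plan is to mirror exactly the argument for Theorem~\ref{thm:elem_antichain}, replacing the $e_{(n)}$ coefficient by the $s_{(1^n)}$ coefficient and replacing Theorem~\ref{thm:elem_X_G-X_H} by its Schur analogue Theorem~\ref{thm:Schur_X_G-X_H}. The key observation is that the common scaling factor collapses: if $[s_{(1^n)}]X_{G_i}=[s_{(1^n)}]X_{G_j}$ for all $i,j$, then
$$X_s(G_i,G_j)=X_{G_i}-\frac{[s_{(1^n)}]X_{G_i}}{[s_{(1^n)}]X_{G_j}}X_{G_j}=X_{G_i}-X_{G_j}.$$

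First I would invoke Theorem~\ref{thm:Schur_X_G-X_H}, which says that for any two graphs on $n$ vertices, $X_{G_i}-X_{G_j}$ is either $0$ or not Schur-positive. Since the graphs $G_1,\ldots,G_k$ have distinct chromatic symmetric functions, $X_{G_i}-X_{G_j}\neq 0$ whenever $i\neq j$, so in that case $X_s(G_i,G_j)=X_{G_i}-X_{G_j}$ is not Schur-positive, i.e.\ $G_i\not\geq_s G_j$. As this holds for every ordered pair with $i\neq j$, no two distinct elements of $\{G_1,\ldots,G_k\}$ are related in $\cS_n$, which is precisely the statement that they form an anti-chain.

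There is essentially no obstacle here; the only thing to be careful about is that the hypothesis guarantees the weighting factor equals $1$ in \emph{both} directions (for $X_s(G_i,G_j)$ and for $X_s(G_j,G_i)$), so that one genuinely rules out relations both ways rather than just one. Everything else is a direct citation of the already-established Theorem~\ref{thm:Schur_X_G-X_H} together with the definition of $\geq_s$. A corollary in the spirit of Corollary~\ref{cor:trees_max_elem} would follow immediately: by Corollary~\ref{cor:X_T_coeff} all trees on $n$ vertices have $[s_{(1^n)}]X_T=2^{n-1}$, so trees with distinct chromatic symmetric functions form an anti-chain in $\cS_n$.
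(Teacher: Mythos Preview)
Your proposal is correct and follows exactly the same approach as the paper: reduce $X_s(G_i,G_j)$ to $X_{G_i}-X_{G_j}$ using the equal $s_{(1^n)}$ coefficients, then invoke Theorem~\ref{thm:Schur_X_G-X_H} to conclude that no two distinct $G_i,G_j$ are related. The paper's proof is essentially verbatim what you wrote, and the tree corollary you mention is indeed stated immediately afterward.
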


\begin{proof}
Let $\{G_1, G_2, \ldots, G_k\}$ satisfy all the assumptions stated in the proposition. This means $X_s(G_i,G_j)=X_{G_i}-X_{G_j}$. By Theorem~\ref{thm:Schur_X_G-X_H} we know that either that $X_{G_i}-X_{G_j}=0$ so $i=j$ or that $X_{G_i}-X_{G_j}$ is not Schur-positive. This implies that $G_i$ is not related to $G_j$ for all $i\neq j$, which means that $\{G_1, G_2, \ldots, G_k\}$ is an anti-chain  in  ${\mathcal S}_n$. 
\end{proof}

 Using this proposition we can prove that trees form an anti-chain. 
 
 \begin{corollary}
Trees on $n$ vertices with distinct chromatic symmetric functions form an anti-chain  in  ${\mathcal S}_n$.
\end{corollary}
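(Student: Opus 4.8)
The plan is to mimic the proof of Corollary~\ref{cor:trees_max_elem} exactly, but using the Schur-side analogue. By Corollary~\ref{cor:X_T_coeff} every tree $T$ on $n$ vertices satisfies $[s_{(1^n)}]X_T = 2^{n-1}$, so in particular all trees on $n$ vertices have the \emph{same} value of the coefficient $[s_{(1^n)}]X_T$. This is precisely the hypothesis required by Theorem~\ref{thm:schur_antichain}.

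First I would invoke Corollary~\ref{cor:X_T_coeff} to record that $[s_{(1^n)}]X_T = 2^{n-1}$ for every tree $T$ on $n$ vertices. Then, taking the set $\{G_1,\dots,G_k\}$ to be the collection of trees on $n$ vertices with pairwise distinct chromatic symmetric functions, the equal-coefficient hypothesis $[s_{(1^n)}]X_{G_i} = [s_{(1^n)}]X_{G_j} = 2^{n-1}$ holds, and the distinctness hypothesis holds by construction. Applying Theorem~\ref{thm:schur_antichain} directly yields that this set is an anti-chain in $\mathcal S_n$, which is exactly the claim.

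There is essentially no obstacle here: the corollary is an immediate specialization of Theorem~\ref{thm:schur_antichain} via the computed tree coefficient, entirely parallel to how Corollary~\ref{cor:trees_max_elem} follows from Theorem~\ref{thm:elem_antichain}. The only point worth a sentence of care is noting that ``distinct chromatic symmetric functions'' in the statement is what supplies the distinctness hypothesis of Theorem~\ref{thm:schur_antichain}, and that (per the convention in the paper) graphs with equal chromatic symmetric function are already identified as a single element of $\mathcal S_n$, so no genuinely repeated elements arise.

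\begin{proof}
By Corollary~\ref{cor:X_T_coeff}, every tree $T$ on $n$ vertices has $[s_{(1^n)}]X_T = 2^{n-1}$. Hence any collection of trees on $n$ vertices with distinct chromatic symmetric functions satisfies the hypotheses of Theorem~\ref{thm:schur_antichain}, namely equal $s_{(1^n)}$ coefficients and distinct chromatic symmetric functions. Therefore this collection forms an anti-chain in $\mathcal S_n$.
\end{proof}
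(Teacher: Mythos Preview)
Your proof is correct and matches the paper's own argument essentially verbatim: invoke Corollary~\ref{cor:X_T_coeff} to get $[s_{(1^n)}]X_T = 2^{n-1}$ for all trees, then apply Theorem~\ref{thm:schur_antichain}. There is nothing to add.
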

\begin{proof}
By Corollary~\ref{cor:X_T_coeff} all trees $T$ on $n$ vertices have $[s_{(1^n)}]X_T=2^{n-1}$, so by Theorem~\ref{thm:schur_antichain} we have our result. 
\end{proof}

\begin{remark}
By Theorem~\ref{thm:schur_antichain} given any integer $z\in\Z$ the collection of graphs on $n$ vertices $\{G:[s_{(1^n)}]X_G=z\}$ is an anti-chain in $\cS_n$ under the assumption that we are grouping graphs together in $\cS_n$ if they have equal chromatic symmetric function. 
\end{remark}

Trees are maximal elements in $\cS_n$ just like they are in $\cE_n$. Before we prove this we will prove a lemma on the number of acyclic orientations of non-tree graphs compared with trees. 

\begin{lemma}
Trees $T$ on $n$ vertices have $2^{n-1}$ acyclic orientations. Graphs $G$ on $n$ vertices that are not trees have more than $2^{n-1}$ acyclic orientations. 
\label{lem:acyclic_bound}
\end{lemma}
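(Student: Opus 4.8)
The plan is to count acyclic orientations via the chromatic polynomial using equation~\eqref{eq:total_acyclic}, namely that the number of acyclic orientations of a graph $G$ on $n$ vertices equals $(-1)^n\chi_G(-1)$. For a tree $T$ we already know $\chi_T(k)=k(k-1)^{n-1}$, so $(-1)^n\chi_T(-1)=(-1)^n(-1)(-2)^{n-1}=2^{n-1}$, which gives the first statement immediately (this is also recorded implicitly in Corollary~\ref{cor:X_T_coeff} via equation~\eqref{eq:s_1^n_coeff}). For the second statement I would induct on the number of edges $\#E(G)$, with the base case being connected graphs that have exactly one more edge than a tree, i.e. unicyclic graphs, and more generally reducing any non-tree $G$ to a graph with one fewer edge.

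The key step is a deletion–contraction argument. Given a connected non-tree $G$, pick an edge $\epsilon$ lying on a cycle, so that $G-\epsilon$ is still connected on $n$ vertices and $G/\epsilon$ is connected on $n-1$ vertices. Write $a(H)$ for the number of acyclic orientations of $H$. Evaluating equation~\eqref{eq:deletion-contraction} at $k=-1$ and multiplying by $(-1)^n$ gives the standard recursion $a(G)=a(G-\epsilon)+a(G/\epsilon)$. Now $G-\epsilon$ is a connected graph on $n$ vertices, hence has at least $2^{n-1}$ acyclic orientations (either by induction if it is still a non-tree, or by the tree count if it happens to be a tree), and $G/\epsilon$ is a connected graph on $n-1$ vertices, so $a(G/\epsilon)\geq 1$; in fact $a(G/\epsilon)\geq 2^{n-2}\geq 1$. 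Therefore $a(G)\geq 2^{n-1}+1>2^{n-1}$, completing the induction. One only needs to be slightly careful in the base/degenerate cases: for $n=1$ or $n=2$ every connected graph is a tree so there is nothing to prove, and for $n\geq 3$ any non-tree connected graph contains a cycle, guaranteeing the edge $\epsilon$ exists.

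I expect the main (minor) obstacle to be bookkeeping about connectivity: one must ensure the chosen edge $\epsilon$ is a non-bridge so that $G-\epsilon$ remains connected with the same vertex set, and one must handle the case where $G-\epsilon$ turns out to be a tree (which can only happen when $G$ is unicyclic) separately from the inductive case, but in both situations the bound $a(G-\epsilon)\geq 2^{n-1}$ holds. Alternatively, instead of deletion–contraction one could argue directly that $G$ contains a spanning tree $T$ plus at least one extra edge $\epsilon$, and that every acyclic orientation of $T$ extends to at least one acyclic orientation of $T+\epsilon$ (orient $\epsilon$ consistently with a topological order of $T$) while the other orientation of $\epsilon$ sometimes also yields an acyclic orientation — but making "sometimes" precise is exactly what the deletion–contraction recursion does cleanly, so I would present the inductive proof above.
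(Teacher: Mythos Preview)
Your proposal is correct and follows essentially the same route as the paper: both arguments evaluate the chromatic polynomial at $k=-1$ via equation~\eqref{eq:total_acyclic}, handle trees directly, and for a non-tree pick a non-bridge edge $\epsilon$ and use deletion--contraction to get $a(G)=a(G-\epsilon)+a(G/\epsilon)\geq 2^{n-1}+2^{n-2}>2^{n-1}$. Your write-up is in fact a bit more careful about the connectivity bookkeeping than the paper's version.
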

\begin{proof}
This can be proved using deletion-contraction and induction on the number of vertices and edges in graphs. Our base case is any tree on $n\geq 1$ vertex, which will have $n-1$ edges. Since there are no cycles there are $2^{n-1}$ acyclic orientations. Now let $G$ be a non-tree connected graph on $n\geq 2$ vertices, so $G$ has at least $n$ edges. There exists an edge $\epsilon\in E(G)$ where $G-\epsilon$ is still connected. By deletion-contraction  we have that 
$\chi_G(k)=\chi_{G-\epsilon}(k)-\chi_{G/\epsilon}(k).$
Because equation~\eqref{eq:total_acyclic} tells us that $(-1)^{n}\chi_G(-1)$ is the total number of acyclic orientations for $G$. Using induction we get that the total number of acyclic orientations for $G$ is 
$$
(-1)^n\chi_G(-1)=(-1)^n\chi_{G-\epsilon}(-1)+(-1)^{n-1}\chi_{G/\epsilon}(-1)\geq 2^{n-1}+2^{n-2},
$$
which is certainly greater than $2^{n-1}$. 
\end{proof}

\begin{proposition}
 All trees on $n$ vertices are maximal elements  in  ${\mathcal S}_n$.
 \label{prop:trees_Schur_max}
\end{proposition}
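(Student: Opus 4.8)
The plan is to mirror the proof of Proposition~\ref{prop:trees_maximal_elem} (the $\cE_n$ version), substituting the Schur machinery for the elementary machinery. Suppose for contradiction that $G >_s T$ for some tree $T$ and connected graph $G$, both on $n$ vertices. By definition $X_s(G,T)$ is non-zero and Schur-positive. Since Schur-positive functions are $m$-positive (Remark~\ref{remark:mpos}), and more to the point, since $X_s(G,T)\neq 0$ is Schur-positive, I would extract information from the $p_{(1^n)}$ coefficient exactly as in Proposition~\ref{prop:S(G,1)<S(H,1)}: by Theorem~\ref{thm:X_Gpowersum} we have $[p_{(1^n)}]X_G=[p_{(1^n)}]X_H=1$ for every graph, so
$$[p_{(1^n)}]X_s(G,T)=1-\frac{[s_{(1^n)}]X_G}{[s_{(1^n)}]X_T},$$
and Lemma~\ref{lem:conversion_note} forces this to be positive. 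Hence $[s_{(1^n)}]X_G < [s_{(1^n)}]X_T$.

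Next I would plug in the tree value $[s_{(1^n)}]X_T = 2^{n-1}$ from Corollary~\ref{cor:X_T_coeff}, giving $[s_{(1^n)}]X_G < 2^{n-1}$. On the other hand, by Theorem~\ref{thm:s_hook_coeff} (or equivalently equation~\eqref{eq:s_1^n_coeff}), $[s_{(1^n)}]X_G$ equals the total number of acyclic orientations of $G$. If $G$ is not a tree, Lemma~\ref{lem:acyclic_bound} says this count strictly exceeds $2^{n-1}$, a contradiction; and if $G$ is a tree, then $[s_{(1^n)}]X_G = 2^{n-1}$ by Corollary~\ref{cor:X_T_coeff} again, still contradicting the strict inequality $[s_{(1^n)}]X_G < 2^{n-1}$. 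Either way we reach a contradiction, so no such $G$ exists and $T$ is maximal in $\cS_n$.

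There is essentially no obstacle here: every ingredient is already in place, and the only thing to be careful about is that Lemma~\ref{lem:acyclic_bound} is stated for \emph{connected} non-tree graphs, which is exactly the setting of $\cS_n$, so the dichotomy ``$G$ is a tree or $G$ is a connected non-tree graph'' is exhaustive and both cases are handled. I would also note explicitly, as the $\cE_n$ proof does, that the scaling factor $[s_{(1^n)}]X_G/[s_{(1^n)}]X_T$ is positive (both numerator and denominator are positive by Theorem~\ref{thm:s_hook_coeff} and Theorem~\ref{thm:S(G,1)}), which is what legitimizes reading off the sign of $[p_{(1^n)}]X_s(G,T)$. The proof is therefore just a short paragraph; the real work was done in establishing Lemma~\ref{lem:acyclic_bound} and Lemma~\ref{lem:conversion_note}.
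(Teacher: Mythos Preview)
Your proof is correct and follows essentially the same approach as the paper: both argue by contradiction, use Lemma~\ref{lem:conversion_note} on $[p_{(1^n)}]X_s(G,T)$ to force $[s_{(1^n)}]X_G<2^{n-1}$, and then invoke Lemma~\ref{lem:acyclic_bound} on the acyclic-orientation count to reach a contradiction. The only difference is cosmetic: you split off the case where $G$ is itself a tree, whereas the paper absorbs that case into the single appeal to Lemma~\ref{lem:acyclic_bound} (which already gives $[s_{(1^n)}]X_G\geq 2^{n-1}$ for every connected $G$).
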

\begin{proof}
Suppose that $G>_s T$ for some tree $T$ and connected graph $G$ on $n$ vertices. We will show that this leads to a contradiction. By Lemma~\ref{lem:conversion_note} if $F\in\Lambda^n$ is a non-zero Schur-positive function then $[p_{(1^n)}]F > 0$. Since $G>_s T$ we know that $X_s(G,T)$ is a non-zero Schur-positive function so $[p_{(1^n)}]X_s(G,T)> 0$. By Theorem~\ref{thm:X_Gpowersum} we can see that $[p_{(1^n)}]X_G=[p_{(1^n)}]X_T=1$ so using  Corollary~\ref{cor:X_T_coeff} we have that
$$[p_{(1^n)}]X_s(G,T)=1-\frac{[s_{(1^n)}]X_G}{2^{n-1}}>0.$$
This means that $2^{n-1}>[s_{(1^n)}]X_G$ where $[s_{(1^n)}]X_G$ is the total number of acyclic orientations of $G$ by Theorem~\ref{thm:s_hook_coeff}. By Lemma~\ref{lem:acyclic_bound} this is a  contradiction. 
\end{proof}

Just like in $\cE_n$, while all trees are maximal in $\cS_n$ some are actually independent elements. The stars $S_n$ are a family of independent elements. In order to prove this we need to study some specific coefficients of $X_{S_n}$ and $X_G$ for a general graph $G$.  We will need the conversion formula from monomial symmetric functions to Schur symmetric functions. 
 In Macdonald's book~\cite[page 105]{M79} the transition formula  is $$m_{\lambda} =\sum_{\nu\vdash n}K_{\lambda,\nu}^{-1}s_{\nu}.$$
The coefficients  $K_{\lambda,\nu}^{-1}$ are the {\it inverse Kostka numbers} define by 
\begin{equation}
K_{\lambda,\nu}^{-1}=\sum_T (-1)^{\text{ht}(T)},
\label{eq:inverse_kotska}
\end{equation}
which is summed over special rim hooks $T$ with  underlying Young diagram  $\nu$.  See~\cite[page 107]{M79} for full details. Particularly, Macdonald's book notes that $K_{\lambda,\mu}^{-1}=0$ unless $\lambda \succeq\nu$ in dominance order. 

\begin{lemma} Let $n\geq 4$. Then
\begin{enumerate}[(i)]
\item $[s_{(n-2,2)}]X_G=[m_{(n-2,2)}]X_G-[m_{(n-1,1)}]X_G$ and
\item $[s_{(n-2,2)}]X_{S_n}=-1$. 
\item If $\alpha(G)\leq n-2$ then $[s_{(n-2,2)}]X_G\geq 0$. 
\item If $\alpha(G)=n-1$ then $G=S_n$. 
\end{enumerate}
\label{lem:Schur_stars}
\end{lemma}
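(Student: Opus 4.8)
The plan is to prove the four parts in order, as each feeds into the next. For part (i), I would use the conversion $m_\lambda = \sum_{\nu} K^{-1}_{\lambda,\nu} s_\nu$ together with the dual fact that $[s_\nu] F = \sum_{\lambda} (\text{something})$; more precisely, inverting, $[s_{(n-2,2)}] X_G = \sum_{\lambda \succeq (n-2,2)} c_\lambda [m_\lambda] X_G$ where the relevant $\lambda$ are only $(n)$, $(n-1,1)$, and $(n-2,2)$ by the dominance constraint. Since $[m_{(n)}]X_G = 0$ for any graph with an edge (a connected graph on $n \geq 2$ vertices has no stable partition of type $(n)$), only the $(n-1,1)$ and $(n-2,2)$ terms survive. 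I would then compute the inverse Kostka numbers $K^{-1}_{(n-1,1),(n-2,2)} = -1$ and $K^{-1}_{(n-2,2),(n-2,2)} = 1$ via the special rim hook description in equation~\eqref{eq:inverse_kotska}, giving the claimed identity. This is the cleanest route, though one must be careful about which direction the transition matrix inverts.

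For part (ii), I would compute $X_{S_n}$ in the monomial basis using Theorem~\ref{thm:X_Gmonomial}: $[m_\lambda]X_{S_n} = \lambda^! a_\lambda$ where $a_\lambda$ counts stable partitions of $S_n$ of type $\lambda$. For the star, a stable partition either places the center alone (forcing type $(n-1,1)$, one such partition up to the block structure) or places the center with some leaves; enumerating stable partitions of types $(n-1,1)$ and $(n-2,2)$ is elementary. Plugging these counts and the appropriate $\lambda^!$ factors into part (i) should yield $-1$. For part (iii), the hypothesis $\alpha(G) \leq n-2$ means $G$ has no stable partition of type $(n-1,1)$, so $[m_{(n-1,1)}]X_G = 0$, and then part (i) gives $[s_{(n-2,2)}]X_G = [m_{(n-2,2)}]X_G \geq 0$ since monomial coefficients of $X_G$ are non-negative. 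For part (iv), $\alpha(G) = n-1$ forces a single vertex, say $v$, adjacent to all others while the remaining $n-1$ vertices form an independent set (if two vertices outside a maximum independent set were non-adjacent they could be added to it); since $G$ is connected this is exactly $S_n$.

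I expect part (i) — specifically, pinning down the inverse Kostka numbers and getting the transition direction exactly right — to be the main technical obstacle, since sign conventions and the distinction between $K^{-1}_{\lambda,\nu}$ acting on rows versus columns of the transition matrix are easy to get backwards. Everything else is a short combinatorial count or an immediate consequence. A secondary subtlety is making sure that no partition $\lambda$ with $\lambda \succ (n-2,2)$ other than $(n)$ and $(n-1,1)$ contributes; for $n \geq 4$ the partitions dominating $(n-2,2)$ are precisely $(n)$, $(n-1,1)$, $(n-2,2)$, which should be checked but is immediate from the definition of dominance order. Once part (i) is in hand, parts (ii)--(iv) follow quickly and I would present them compactly.
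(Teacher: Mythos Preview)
Your proposal is correct and follows essentially the same route as the paper: use the dominance-triangularity of the inverse Kostka numbers to reduce $[s_{(n-2,2)}]X_G$ to a combination of $[m_{(n)}]$, $[m_{(n-1,1)}]$, and $[m_{(n-2,2)}]$, then count stable partitions. The only cosmetic difference is that the paper disposes of the $(n)$ term via $K^{-1}_{(n),(n-2,2)}=0$ rather than $[m_{(n)}]X_G=0$, and your parenthetical in part (iv) about ``two vertices outside a maximum independent set'' is inapplicable here (there is only one such vertex) --- connectivity alone forces that vertex to be adjacent to every leaf, which is the paper's argument.
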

\begin{proof}
From the definition of inverse Kostka numbers given below equation~\eqref{eq:inverse_kotska} we can see that $K_{(n-2,2),(n-2,2)}^{-1}=1$, $K_{(n-1,1),(n-2,2)}^{-1}=-1$ and $K_{(n),(n-2,2)}^{-1}=0$. 
Since $K_{\lambda,\mu}^{-1}=0$ unless $\lambda \succeq\mu$ we know that $s_{(n-2,2)}$ only appears in the expansion of $m_{\lambda}$ in the Schur basis when $\lambda\succeq (n-2,2)$. Such $\lambda$ are $(n)$, $(n-1,1)$ and $(n-2,2)$. So using the expansion of $X_G$ in the monomial basis we can calculate the coefficient of $s_{(n-2,2)}$, which is
\begin{align*}
[s_{(n-2,2)}]X_G&=K_{(n-2,2),(n-2,2)}^{-1}\cdot [m_{(n-2,2)}]X_G+K_{(n-1,1),(n-2,2)}^{-1}\cdot [m_{(n-1,1)}]X_G+K_{(n),(n-2,2)}^{-1}\cdot [m_{(n)}]X_G\\
&= [m_{(n-2,2)}]X_G-[m_{(n-1,1)}]X_G.
\end{align*}

Note that the star $S_n$ has exactly one stable partition of type $(n-1,1)$ and does not have any stable partitions of type $(n-2,2)$. By Theorem~\ref{thm:X_Gmonomial} we have 
$[m_{(n-2,2)}]X_{S_n}=0$ and $[m_{(n-1,1)}]X_{S_n}=1$ so $[s_{(n-2,2)}]X_{S_n}=-1$. 
Further note that if $\alpha(G)\leq n-2$ there is no stable partition of $G$ of type $(n-1,1)$, so $[s_{(n-2,2)}]X_G=[m_{(n-2,2)}]X_G\geq 0$. Lastly note that in the case where $G$ is a connected graph with $\alpha(G)= n-1$ there are $n-1$ vertices with no edges between them and at most $n-1$ edges from these $n-1$ vertices to the last $n$th vertex. For the graph to be connected we need all $n-1$ edges, which describes the star $S_n$. 
\end{proof}

We now have everything we need to prove stars are independent elements. 

\begin{proposition}
The star $S_n$ is an independent element in  ${\mathcal S}_n$ for $n\geq 4$. 
\label{prop:stars_independent_Schur}
\end{proposition}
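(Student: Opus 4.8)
The plan is to mirror the argument used for $\cE_n$ in Proposition~\ref{prop:stars_independent}, replacing the $e$-basis computations by the Schur-basis computations assembled in Lemma~\ref{lem:Schur_stars}. Since all trees are maximal elements of $\cS_n$ by Proposition~\ref{prop:trees_Schur_max}, it suffices to show that $S_n$ is also a minimal element, i.e. that there is no connected graph $G$ on $n$ vertices with $S_n >_s G$. So I would assume such a $G$ exists and derive a contradiction by exhibiting a partition $\lambda \vdash n$ with $[s_\lambda] X_s(S_n,G) < 0$, which forces $X_s(S_n,G)$ to not be Schur-positive.

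First I would observe that $G$ cannot be a tree: trees are maximal in $\cS_n$ by Proposition~\ref{prop:trees_Schur_max}, so $S_n >_s G$ with $G$ a tree is impossible (and moreover $X_s(S_n,T)=X_{S_n}-X_T$ is not Schur-positive unless it is zero, by Corollary~\ref{cor:X_T_coeff} and Theorem~\ref{thm:Schur_X_G-X_H}). Hence $G$ is a connected non-tree graph on $n \geq 4$ vertices. By Lemma~\ref{lem:Schur_stars}(iv), since $G \neq S_n$ we have $\alpha(G) \leq n-2$, and therefore Lemma~\ref{lem:Schur_stars}(iii) gives $[s_{(n-2,2)}] X_G \geq 0$. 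Now I would compute, using $[s_{(n-2,2)}] X_{S_n} = -1$ from Lemma~\ref{lem:Schur_stars}(ii) together with the fact that the scaling factor $\tfrac{[s_{(1^n)}]X_{S_n}}{[s_{(1^n)}]X_G} = \tfrac{2^{n-1}}{[s_{(1^n)}]X_G}$ is strictly positive (it is a ratio of numbers of acyclic orientations, positive by Theorem~\ref{thm:s_hook_coeff}), that
$$[s_{(n-2,2)}] X_s(S_n,G) = -1 - \frac{2^{n-1}}{[s_{(1^n)}]X_G}\,[s_{(n-2,2)}] X_G \leq -1 < 0.$$
This contradicts Schur-positivity of $X_s(S_n,G)$, so $S_n >_s G$ is impossible; hence $S_n$ is minimal, and being also maximal, it is independent in $\cS_n$.

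I do not expect a serious obstacle here, since the real work has already been done in the lemmas: the sign analysis of $[s_{(n-2,2)}]$ in Lemma~\ref{lem:Schur_stars} (which isolates exactly the hook-dominated partitions $(n)$, $(n-1,1)$, $(n-2,2)$ contributing to $s_{(n-2,2)}$ via inverse Kostka numbers) and the classification $\alpha(G)=n-1 \Rightarrow G = S_n$ in part (iv). The only point requiring a word of care is the very first reduction — confirming that $G$ being non-tree is forced, which I would get from Proposition~\ref{prop:trees_Schur_max} — and the positivity of the scaling factor, which is immediate from the interpretation of $[s_{(1^n)}]X_G$ as the (positive) number of acyclic orientations of the connected graph $G$. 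Everything else is a one-line substitution.
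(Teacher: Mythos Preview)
Your proposal is correct and follows essentially the same approach as the paper's proof: reduce to minimality via Proposition~\ref{prop:trees_Schur_max}, use Lemma~\ref{lem:Schur_stars}(iv) to get $\alpha(G)\leq n-2$ for any connected $G\neq S_n$, then parts (ii)--(iii) plus positivity of $[s_{(1^n)}]X_G$ to show $[s_{(n-2,2)}]X_s(S_n,G)<0$. The only difference is that your intermediate step ruling out $G$ a tree is unnecessary---Lemma~\ref{lem:Schur_stars}(iv) already handles every connected $G\neq S_n$ directly---but this does no harm.
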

\begin{proof}
Let $G$ be a connected graph that is not a star. By Proposition~\ref{prop:trees_Schur_max} we know that $S_n$ is a maximal element in ${\mathcal S}_n$ so we only have to show that {$S_n\not \geq_s G$} for any $G\neq S_n$.
Because connected graphs can have at most $n-1$ independent vertices, by Lemma~\ref{lem:Schur_stars} we know that $\alpha(G)\leq n-2$ and so $[s_{(n-2,2)}]X_G\geq 0$. Recall that by Theorem~\ref{thm:s_hook_coeff} that $[s_{(1^n)}]X_G> 0$ because $[s_{(1^n)}]X_G$ counts the number of acyclic orientations of $G$. Using Corollary~\ref{cor:X_T_coeff} and Proposition~\ref{prop:trees_Schur_max} we have  that 
$$[s_{(n-2,2)}]X_s(S_n,G)=-1-2^{n-1}\frac{[s_{(n-2,2)}]X_G}{[s_{(1^n)}]X_G}<0,$$
which shows that  $S_n\not \geq_s G$. \end{proof}

\begin{corollary}
The poset ${\mathcal S}_n$ is not a lattice for $n\geq 4$. 
\end{corollary}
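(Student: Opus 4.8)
The plan is to deduce this immediately from Proposition~\ref{prop:stars_independent_Schur}, in exact parallel with the argument already given for $\cE_n$. First I would recall the standard fact that in a lattice every pair of elements has a join (a least upper bound). Then I would record the simple observation that a poset $P$ with at least two elements containing an \emph{independent} element $x$ cannot be a lattice: picking any $y\in P$ with $y\neq x$, a join $x\vee y$ would satisfy $x\leq x\vee y$, so by independence of $x$ we would be forced to have $x\vee y=x$, hence $y\leq x$, contradicting the incomparability of $x$ and $y$. (Equivalently one may simply cite that a lattice has no independent elements.)

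The only thing needing a quick check is that $\cS_n$ really has more than one element for $n\geq 4$, so that the star has some other element with which to fail to have a join; this is clear since, for instance, $K_n$ and $P_n$ are connected graphs on $n$ vertices with distinct chromatic symmetric functions by Example~\ref{ex:complete_graph} and Theorem~\ref{thm:X_Gmonomial}. Combining this with Proposition~\ref{prop:stars_independent_Schur}, which states that $S_n$ is an independent element of $\cS_n$ for $n\geq 4$, the observation above yields that $\cS_n$ is not a lattice.

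I do not expect any obstacle: all of the mathematical content sits in Proposition~\ref{prop:stars_independent_Schur}, and the lattice-theoretic step is routine, so the proof should be a single short paragraph, essentially identical in form to the corresponding corollary for $\cE_n$.
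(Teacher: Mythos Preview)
Your proposal is correct and follows essentially the same approach as the paper: the paper's proof is the one-line observation that lattices do not have independent elements, together with Proposition~\ref{prop:stars_independent_Schur}. Your added verification that $\cS_n$ has more than one element and your explicit join argument are harmless elaborations of the same idea.
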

\begin{proof}
Lattices do not have independent elements, which by  Proposition~\ref{prop:stars_independent_Schur} the poset ${\mathcal S}_n$ has. 
\end{proof}

In $\cS_n$, similar to $\cE_n$, we have that the family of lollipop graphs on $n$ vertices forms a chain with the complete graph as the minimal element and the path as the maximum element. The proof is more complex than in the case of $\cE_n$, so we present the proof in its own section, concluding with the theorem in Theorem~\ref{the:lollipop}. 

\section{Lollipops in the chromatic Schur-positivity poset}
\label{sec:schur_lollipop}

In this section we prove that the set of lollipops $\{L_{m,n}:m+n=N\}$ forms a chain in the poset $\cS_N$. However, the proof will not be as  straightforward as it was in the case of $\cE_N$. We will use Gasharov's~\cite{Gasharov}  interpretation for the coefficients of $X_G$  in the Schur basis. This interpretation is in  terms of $P$-tableau  in the case when $G$ is an incomparability graph of $(3+1)$-free poset. All lollipop graphs are examples of incomparability graphs of $(3+1)$-free posets. Before we present the needed background on $P$-tableaux we will set up our proof. First we will need the coefficient $[s_{(1^{m+n})}]X_{L_{m,n}}$. 

\begin{proposition} We have
$[s_{(1^{m+n})}]X_{L_{m,n}}=2^nm!$.
\label{prop:lollipop_coeff_schur}
\end{proposition}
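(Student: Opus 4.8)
The plan is to compute $[s_{(1^{m+n})}]X_{L_{m,n}}$ via equation~\eqref{eq:s_1^n_coeff}, which says this coefficient equals $(-1)^{m+n}\chi_{L_{m,n}}(-1)$, the total number of acyclic orientations of $L_{m,n}$. First I would simply plug $k=-1$ into the explicit chromatic polynomial from Theorem~\ref{thm:lollipop}, namely $\chi_{L_{m,n}}(k)=k(k-1)^{n+1}(k-2)(k-3)\cdots(k-(m-1))$. Substituting $k=-1$ gives $(-1)\cdot(-2)^{n+1}\cdot(-3)(-4)\cdots(-m)$, and then I would multiply by the sign $(-1)^{m+n}$ and carefully bookkeep the powers of $2$ and the factorials. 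The product $(-3)(-4)\cdots(-m)$ has $m-2$ factors and equals $(-1)^{m-2}\cdot m!/2$, while $(-2)^{n+1}=(-1)^{n+1}2^{n+1}$; combining everything the signs should cancel and the powers of $2$ should reduce to leave $2^n m!$.

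An alternative (essentially equivalent) route, in case one prefers to avoid $k=-1$ evaluations, is induction using the recursion $X_{L_{m,n}}=(m-1)X_{L_{m-1,n+1}}-(m-2)X_{K_{m-1}}X_{P_{n+1}}$ from Theorem~\ref{thm:lollipop}: take $[s_{(1^{m+n})}]$ of both sides, use that $[s_{(1^N)}]$ is multiplicative-ish only after noting $X_{K_{m-1}}X_{P_{n+1}}$ has $s_{(1^N)}$-coefficient equal to the product of the number of acyclic orientations of $K_{m-1}$ and of $P_{n+1}$, which is $(m-1)!\cdot 2^n$ (since a path on $n+1$ vertices has $2^n$ acyclic orientations and $K_{m-1}$ has $(m-1)!$). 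One checks the base case $L_{1,n}=P_{n+1}$, giving $2^n\cdot 1!=2^n$, and the base case $X_{K_N}=N!s_{(1^N)}$ from Example~\ref{ex:complete_graph}, giving $2^0\cdot N!$ when $n=0$; the inductive step then follows from a short computation with the recursion.

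I expect no serious obstacle here — this is a routine coefficient extraction. The only place to be careful is the sign bookkeeping in the direct computation: matching the global $(-1)^{m+n}$ against the $n+1$ minus signs coming from $(k-1)^{n+1}$, the one from the factor $k$, and the $m-2$ minus signs from $(k-2)\cdots(k-(m-1))$, for a total of $1+(n+1)+(m-2)=m+n$ sign factors, which is exactly what is needed for everything to come out positive. I would present the direct computation as the main proof since it mirrors the proof of Proposition~\ref{prop:lollipop_coeff}, and remark that the result is consistent with Corollary~\ref{cor:X_T_coeff} when $m=1$ (a tree) and with Proposition~\ref{prop:X_K_n} when $n=0$.
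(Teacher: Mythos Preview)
Your proposal is correct and follows essentially the same approach as the paper: apply equation~\eqref{eq:s_1^n_coeff} to the explicit chromatic polynomial of $L_{m,n}$ from Theorem~\ref{thm:lollipop} and evaluate at $k=-1$. Your sign bookkeeping is accurate, and the alternative inductive argument you sketch is a valid (though unnecessary) backup.
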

\begin{proof}
Using Theorem~\ref{thm:lollipop} and equation~\eqref{eq:s_1^n_coeff} we have that 
$$[s_{(1^{m+n})}]X_{L_{m,n}}=(-1)^{m+n}\chi_{L_{m,n}}(-1)=2^nm!,$$
which completes the proof.
\end{proof}

\begin{figure}
\begin{tikzpicture}
\coordinate (A) at (0,0);
\coordinate (B) at (1,-1);
\coordinate (C) at (1,1);
\coordinate (D) at (2,0);
\coordinate (E) at (3,0);
\coordinate (F) at (4,0);
\coordinate (G) at (5,0);
\draw[black] (A)--(B)--(C)--(D)--(A)--(C);
\draw[black] (B)--(D)--(E)--(F)--(G);
\filldraw[black] (A) circle [radius=2pt] node[below] {$1$};
\filldraw[black] (B) circle [radius=2pt] node[right] {$2$};
\filldraw[black] (C) circle [radius=2pt] node[right] {$3$};
\filldraw[black] (D) circle [radius=2pt] node[below] {$4$};
\filldraw[black] (E) circle [radius=2pt] node[below] {$5$};
\filldraw[black] (F) circle [radius=2pt] node[below] {$6$};
\filldraw[black] (G) circle [radius=2pt] node[below] {$7$};

\begin{scope}[shift={(7,-1)}]
\coordinate (A) at (0,0);
\coordinate (B) at (1,0);
\coordinate (C) at (2,0);
\coordinate (D) at (3,0);
\coordinate (E) at (0,1);
\coordinate (F) at (1,1);
\coordinate (G) at (1,2);
\draw[black] (A)--(E)--(G)--(D)--(F)--(C)--(E)--(B);
\draw[black] (A)--(F)--(B);
\filldraw[black] (A) circle [radius=2pt] node[below] {$1$};
\filldraw[black] (B) circle [radius=2pt] node[below] {$2$};
\filldraw[black] (C) circle [radius=2pt] node[below] {$3$};
\filldraw[black] (D) circle [radius=2pt] node[below] {$4$};
\filldraw[black] (E) circle [radius=2pt] node[left] {$5$};
\filldraw[black] (F) circle [radius=2pt] node[left] {$6$};
\filldraw[black] (G) circle [radius=2pt] node[left] {$7$};
\end{scope}
\end{tikzpicture}
\caption{On the left we have $L_{4,3}$ and on the right we  have the Hasse  diagram of its associated poset  $\cP_{4,3}$.}
\label{fig:poset}
\end{figure}

To prove that the lollipops form a chain it suffices to show that $L_{m,n}\geq_s L_{m+1,n-1}$ for any $m\geq 2$ and $n\geq 1$. Using Proposition~\ref{prop:lollipop_coeff_schur} this is equivalent to showing that 
$$X_s(L_{m,n}, L_{m+1,n-1})=X_{L_{m,n}}-\frac{2}{m+1}X_{L_{m+1,n-1}}$$
is Schur-positive. 
It suffices to show that 
$$2\cdot [s_{\lambda}]X_{L_{m+1,n-1}}\leq (m+1)\cdot [s_{\lambda}]X_{L_{m,n}}.$$

Now we will  introduce $P$-tableaux in the case of lollipops. See~\cite{Gasharov} for more details.  
Consider the poset $\cP_{m,n}$ on $[m+n]$ where $i\leq_{\cP_{m,n}} j$ if and only if $i\leq j$ and ${ij}\notin E(L_{m,n})$. The lollipop $L_{m,n}$ is the incomparability graph of $\cP_{m,n}$. See Figure~\ref{fig:poset}. 
A {\it P-tableau} of shape $\lambda\vdash m+n$ for  $L_{m,n}$ is a filling of the Young diagram of $\lambda$. We will use the convention of drawing our Young diagrams of shape $\lambda=(\lambda_1,\lambda_2,\ldots, \lambda_{{l(\lambda)}})$ so that row 1 is at the top with $\lambda_1$ boxes, row ${l(\lambda)}$ is at the bottom with  $ \lambda_{{l(\lambda)}}$ boxes and all  rows are left-justified. We will number  columns from left to right.  
 We fill the Young diagram with $[m+n]$ so that:
\begin{enumerate}
\item The rows are  increasing with respect to the poset $\cP_{m,n}$. 
\item There are no adjacent decreases along the columns with respect to the poset $\cP_{m,n}$. 
\end{enumerate}
This means that if $i$ appears to the left of $j$ in the same row then $i<_{\cP_{m,n}} j$ and if $i$ appears immediately above $j$ in the same column then $i\not>_{\cP_{m,n}} j$. Let $\cT_{\lambda,m,n}$ be the collection of all $P$-tableau for  $L_{m,n}$ of shape $\lambda$.  Gasharov's result states that 
$$[s_{\lambda}]X_{L_{m,n}}=\# \cT_{\lambda,m,n}$$
so we are done if we can show that $$2\cdot \#\cT_{\lambda,m+1,n-1}\leq (m+1)\cdot \#\cT_{\lambda,m,n}.$$We will show the inequality above by defining an injection
$$f_{\lambda}:[2]\times \cT_{\lambda,m+1,n-1}\rightarrow [m+1]\times \cT_{\lambda,m,n}.$$
To more easily compare $P$-tableaux for $L_{m,n}$ and $L_{m+1,n-1}$ we will relabel the vertices  $[m+n]$ with  $\cA=\{A_1,A_2,\ldots ,A_{m}\}$, $C$ and $\cB=\{B_1,B_2,\ldots, B_{n-1}\}$. In $L_{m+1,n-1}$ the vertices in $\cA$ are those in $K_{m+1}$ not adjacent to the path, $C$ is the vertex in $K_{m+1}$ adjacent to the path and the path is labeled with $\cB$ so that the smaller subscripts are closer to $C$.  In $L_{m,n}$ the vertices in $\cA-\{A_m\}$ are those in $K_{m}$ not adjacent to the path, $A_m$ is the vertex in $K_{m}$ adjacent to the path and the path is labeled with $\cB\cup\{C\}$ so that $C$ is adjacent to $A_m$ and the smaller subscripts in $\cB$ are closer to $C$.
See Figure~\ref{fig:lollipop_labeling}. 

\begin{figure}
\begin{tikzpicture}
\coordinate (A) at (0,0);
\coordinate (B) at (1,-1);
\coordinate (C) at (1,1);
\coordinate (D) at (2,0);
\coordinate (E) at (3,0);
\coordinate (F) at (4,0);
\coordinate (G) at (5,0);
\draw[black] (A)--(B)--(C)--(D)--(A)--(C);
\draw[black] (B)--(D)--(E)--(F)--(G);
\filldraw[black] (A) circle [radius=2pt] node[below] {$A_1$};
\filldraw[black] (B) circle [radius=2pt] node[right] {$A_2$};
\filldraw[black] (C) circle [radius=2pt] node[right] {$A_3$};
\filldraw[black] (D) circle [radius=2pt] node[below] {$C$};
\filldraw[black] (E) circle [radius=2pt] node[below] {$B_1$};
\filldraw[black] (F) circle [radius=2pt] node[below] {$B_2$};
\filldraw[black] (G) circle [radius=2pt] node[below] {$B_3$};

\begin{scope}[shift={(7,0)}]
\coordinate (A) at (0,0);
\coordinate (B) at (1,-1);
\coordinate (C) at (1,1);
\coordinate (D) at (2,0);
\coordinate (E) at (3,0);
\coordinate (F) at (4,0);
\coordinate (G) at (5,0);
\draw[black] (A)--(B)--(C)--(A);
\draw[black] (C)--(D)--(E)--(F)--(G);
\filldraw[black] (A) circle [radius=2pt] node[below] {$A_1$};
\filldraw[black] (B) circle [radius=2pt] node[right] {$A_2$};
\filldraw[black] (C) circle [radius=2pt] node[right] {$A_3$};
\filldraw[black] (D) circle [radius=2pt] node[below] {$C$};
\filldraw[black] (E) circle [radius=2pt] node[below] {$B_1$};
\filldraw[black] (F) circle [radius=2pt] node[below] {$B_2$};
\filldraw[black] (G) circle [radius=2pt] node[below] {$B_3$};
\end{scope}
\end{tikzpicture}
\caption{For $m=3$ and $n=4$ on the left we have $L_{m+1,n-1}$ and on the right we have  $L_{m,n}$ using the vertex labels $\cA$, $\cB$ and $C$.}
\label{fig:lollipop_labeling}
\end{figure}

With these labels we can now more specifically describe the $P$-tableaux, whose proof follows by definition. 
\begin{lemma}The following are the rules for adjacent cells in the $P$-tableaux of $L_{m+1,n-1}$. 
\begin{enumerate}
\item Let $X$ be immediately left of $Y$. 
\begin{enumerate}[(i)]
\item If $X\in\cA$ then $Y\in \cB$. 
\item If $X=C$ then $Y\in \cB-\{B_1\}$. 
\item If $X=B_i$ then $Y=B_j$ with $j\geq i+2$. 
\end{enumerate}
\item Let $X$ be immediately above of $Y$. 
\begin{enumerate}[(i)]
\item If $X\in\cA$ then $Y$ can have any label. 
\item If $X=C$ then $Y$ can have any label. 
\item If $X=B_i$, $i\geq 1$, then $Y=B_j$ with $j\geq i-1$ considering $C=B_0$. 
\end{enumerate}
\end{enumerate}
\label{lem:Ptableaux_rules}
\end{lemma}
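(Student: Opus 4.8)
The plan is to simply unwind the definitions, since this is a local verification once the order $\cP_{m+1,n-1}$ is written out in the relabelled vertex set $\cA\cup\{C\}\cup\cB$. Recall that $i\leq_{\cP_{m+1,n-1}}j$ means $i\leq j$ and $ij\notin E(L_{m+1,n-1})$, and that under the relabelling $\cA\cup\{C\}$ induces the clique $K_{m+1}$ while $C,B_1,B_2,\ldots,B_{n-1}$ traces the path with smaller subscripts nearer $C$. First I would record that the only strict relations are: $A_i<_{\cP_{m+1,n-1}}B_j$ for all $i$ and $j$; $C<_{\cP_{m+1,n-1}}B_j$ exactly when $j\geq 2$; and $B_i<_{\cP_{m+1,n-1}}B_j$ exactly when $j\geq i+2$. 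In particular any two elements of $\cA\cup\{C\}$ are incomparable, $C$ and $B_1$ are incomparable, consecutive path vertices are incomparable, and --- this is what makes the column rules so clean --- nothing lies strictly below any $A_i$ and nothing lies strictly below $C$.

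Next I would translate the two conditions defining a $P$-tableau into these six items. Condition (1) says that if $X$ is immediately left of $Y$ then $X<_{\cP_{m+1,n-1}}Y$, and reading off the relations above gives 1(a)(i)--(iii) directly: the elements strictly above $A_i$ are exactly the $B_j$, the elements strictly above $C$ are the $B_j$ with $j\geq 2$, i.e.\ $\cB-\{B_1\}$, and the elements strictly above $B_i$ are the $B_j$ with $j\geq i+2$. Condition (2) says that if $X$ is immediately above $Y$ then $X\not>_{\cP_{m+1,n-1}}Y$; since no vertex lies strictly below an $A_i$ or below $C$, items 2(b)(i) and 2(b)(ii) impose no constraint, whereas for $X=B_i$ the elements strictly below it are all the $A_k$, together with $C$ when $i\geq 2$, and the $B_k$ with $k\leq i-2$. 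Forbidding exactly those as values of $Y$ leaves $Y=B_j$ with $j\geq i-1$ under the convention $C=B_0$, the boundary case $i=1$ being precisely the one where $C$ remains admissible.

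The whole argument is bookkeeping, so the only real pitfall is keeping the path indexing straight: noting that $C$ serves as $B_0$ on the path, that $C$ sits below $B_i$ only for $i\geq 2$, and that "no adjacent decreases" forbids $>_{\cP_{m+1,n-1}}$ but not equality or incomparability, so that incomparable vertical neighbours are always allowed. With the comparability data of $\cP_{m+1,n-1}$ arranged as above, each of the six items is then read off immediately, which is why the lemma follows by definition.
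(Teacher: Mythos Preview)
Your proof is correct and is exactly the approach the paper intends: the paper simply says the lemma ``follows by definition,'' and you have carefully unpacked that definition by listing the strict relations in $\cP_{m+1,n-1}$ under the relabelling and reading off the row and column conditions. Your handling of the boundary cases (that nothing lies strictly below any $A_i$ or $C$, and the $C=B_0$ convention at $i=1$) is accurate.
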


We now have all the background we need to define our injection $f_{\lambda}$, but in order to make our argument smoother we will first establish some facts about $P$-tableaux in the following structure lemma.   See Figure~\ref{fig:def_ashift} for a concrete illustration.

\begin{lemma} We have the following for $T\in \cT_{\lambda,m+1,n-1}$. 
\begin{enumerate}[(a)]
\item All of the fillings from $\cA\cup\{C\}$ appear in the first column. 
\item The tableau $T\notin \cT_{\lambda,m,n}$ if and only if $C$ appears directly above some $A_j$ with $j\in[m-1]$.
\item  Column 1 of $T$ is composed of, reading from top to bottom, $\cA'$, $\cB'$, $C$, $\cA''$ and $\cB''$ where 
$\cA'$ and $\cA''$ are  contiguous blocks of cells with fillings from $\cA$ and $\cB'$ and $\cB''$ are contiguous blocks of cells  containing fillings from $\cB$, any of which could possibly be empty. Particularly, if $\cB'$ is non-empty then $\cB'$ contains $\{B_1,B_2,\ldots, B_s\}$ with subscripts increasing as you go up in $T$. 
\item If $B_s$ appears in column $c$ of $T$ and $B_{s+1}$ appears in column $c+1$ below $B_s$ then above and including $B_{s+1}$ appear all of $B_i$, $i\in[s+1,s+L]$ for some $L>1$, in a contiguous block of cells with subscripts increasing as we go up in $T$.  To the immediate right of $B_s$ we have one of the  $B_i$, $i\in[s+1,s+L]$. Additionally, any $B_i$ above $B_{s+L}$  in column $c+1$ has $i<s$ and any $B_i$ below $B_{s+1}$ in column $c+1$  has $i>s+L$. 
\end{enumerate}
\label{lem:Ptableaux_structure}
\end{lemma}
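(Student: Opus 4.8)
The plan is to deduce everything locally from the adjacency rules of Lemma~\ref{lem:Ptableaux_rules} together with one bookkeeping remark: after the relabelling described above, the underlying integer labels of $L_{m,n}$ and $L_{m+1,n-1}$ are literally the same ($A_i=i$, $C=m+1$, $B_j=m+1+j$), and $E(L_{m,n})\subsetneq E(L_{m+1,n-1})$ with difference exactly $\{A_iC:i\in[m-1]\}$; hence $\cP_{m+1,n-1}$ and $\cP_{m,n}$ agree except that $A_i<_{\cP_{m,n}}C$ for $i\in[m-1]$, while these pairs are incomparable in $\cP_{m+1,n-1}$. Part (a) is then immediate: by the three row rules the right neighbour of \emph{any} cell is a $\cB$-entry, so a cell filled from $\cA\cup\{C\}$ has no left neighbour and lies in column $1$; the $m+1$ such fillings occupy $m+1$ distinct cells there (and if $l(\lambda)<m+1$ then $\cT_{\lambda,m+1,n-1}=\emptyset$ and the lemma is vacuous). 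For part (b): since $\le_{\cP_{m+1,n-1}}\ \subseteq\ \le_{\cP_{m,n}}$, any $T\in\cT_{\lambda,m+1,n-1}$ automatically satisfies the row condition for $L_{m,n}$, and the only new order relation $\cP_{m,n}$ can realize ``going down a column'' is $C>_{\cP_{m,n}}A_j$ with $j\in[m-1]$; so $T\notin\cT_{\lambda,m,n}$ iff $C$ sits immediately above some $A_j$, $j\in[m-1]$.

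For part (c), I start from (a): column $1$ holds all of $\cA\cup\{C\}$. Reading it top to bottom, the column rules give that a $\cB$-entry is never immediately above an $\cA$-entry, that only $B_1$ may be immediately above $C$, and that within a run of $\cB$'s the subscript drops by at most $1$ at each downward step. The first fact forces: once a $\cB$-entry appears, every later cell is a $\cB$-entry or $C$; as there is a unique $C$, this yields the block shape $\cA',\cB',C,\cA'',\cB''$ (each block possibly empty, and column $1$ cannot begin with a $\cB$ since $\cA\neq\emptyset$ must appear in it). If $\cB'\neq\emptyset$, its bottom cell is immediately above $C$, hence is $B_1$; pushing ``subscript $\le$ current$+1$ and all entries distinct'' upward then forces $\cB'=\{B_1,\dots,B_s\}$ with subscripts increasing going up.

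Part (d) is the heart. Since $c+1\ge 2$, part (a) makes column $c+1$ consist entirely of $\cB$-entries, whose subscripts decrease by at most $1$ at each downward step; and the cell immediately right of $B_s$, namely $(r,c+1)$, is $B_t$ with $t\ge s+2$ by the row rule. The combinatorial engine I would isolate and prove first is: \emph{if $a_0,a_1,\dots,a_k$ are distinct integers with $a_{i+1}\ge a_i-1$ for all $i$ and $a_k<a_0$, then $k=a_0-a_k$ and $(a_0,\dots,a_k)=(a_0,a_0-1,\dots,a_k)$} --- because a descent from $a_i$ to $a_j<a_i$ forces every integer of $[a_j,a_i]$ to occur between positions $i$ and $j$, and any excursion above $a_0$ or below $a_k$ would create a repeat. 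Applied to the subscript sequence read down column $c+1$ from $B_t$ (row $r$) to $B_{s+1}$ (row $r'$), this forces those cells to be exactly $B_t,B_{t-1},\dots,B_{s+1}$; extending this strict descent maximally upward out of $B_{s+1}$ produces the contiguous block $B_{s+1},B_{s+2},\dots,B_{s+L}$ with $s+L\ge t$, so $L>1$ and $(r,c+1)=B_t$ is one of them. Finally, maximality of the block, the ``at most $1$'' rule, and the absence of $B_s$ from column $c+1$ confine every entry above $B_{s+L}$ to subscripts $<s$ and every entry below $B_{s+1}$ to subscripts $>s+L$ by a short induction up, respectively down, the column.

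The main obstacle is part (d): one must state the ``bounded descent with distinct values'' lemma in exactly the right form and then be disciplined about using only adjacency data one controls --- in particular the argument should not lean on column $c$, since when $c=1$ that column need not consist of $\cB$-entries. Parts (a)--(c) are short once the comparison of $\cP_{m,n}$ and $\cP_{m+1,n-1}$ and the explicit rules of Lemma~\ref{lem:Ptableaux_rules} are in place.
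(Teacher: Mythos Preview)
Your plan is correct and follows essentially the same route as the paper: part (a) from minimality of $\cA\cup\{C\}$ in $\cP_{m+1,n-1}$, part (b) from the edge comparison $E(L_{m,n})=E(L_{m+1,n-1})\setminus\{A_iC:i\in[m-1]\}$, and parts (c)--(d) from the local adjacency rules of Lemma~\ref{lem:Ptableaux_rules}. Your treatment of (d) is in fact more explicit than the paper's --- isolating the ``distinct integers with $a_{i+1}\ge a_i-1$ and $a_k<a_0$ force a consecutive descent'' lemma is a clean way to justify what the paper asserts in one sentence, and your inductions up and down column $c+1$ for the final clause are exactly right (the key point being that $B_s$ itself lies in column $c$, so the bound $j\le i+1\le s$ upgrades to $j<s$, and dually below).

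One small correction: your parenthetical in part (c), ``column $1$ cannot begin with a $\cB$ since $\cA\neq\emptyset$ must appear in it,'' is false. Column $1$ can perfectly well begin with $B_s$ when $\cA'=\emptyset$: the $\cA$-entries then all sit in $\cA''$, below $C$. Nothing in the rules forbids a $\cB$-entry in row $1$, and your own derivation (``once a $\cB$-entry appears, every later cell is $\cB$ or $C$'') already allows this. Fortunately the slip is harmless --- the block decomposition $\cA',\cB',C,\cA'',\cB''$ with \emph{each block possibly empty} is exactly what the lemma asserts and exactly what your argument establishes; just delete the parenthetical.
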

\begin{proof}
Part (a)  follows from the fact that $\cA\cup\{C\}$ are minimal elements in $\cP_{m+1,n-1}$. 

Part (b) comes from the fact that $L_{m,n}$ and $L_{m+1,n-1}$ have the exact same edges except $L_{m,n}$ is missing the edges between the  vertices in $\cA-\{A_m\}$ and $C$. This means that if $T\in \cT_{\lambda,m+1,n-1}$ then the only relations that could disrupt $T\in \cT_{\lambda,m,n}$ is $C>_{\cP_{m,n}}A_i$ for all $i\in[m-1]$. Meaning that $T\notin \cT_{\lambda,m,n}$ if and only if $C$ is above $A_j$ for some $j\in [m-1]$. 

Part  (c) follows from Lemma~\ref{lem:Ptableaux_rules}. The contiguous block breaks down particularly follows from the fact that directly below a filling from $\cB$ we can only have another filling from $\cB$ or the filling $C$. After this filling $C$ we could have some fillings from $\cA$, but as soon as there is one more filling from $\cB$ then we only have fillings from $\cB$. Now we will use the labels for the contiguous blocks of cells described in part (c) of this lemma. From the information we have we can conclude  exactly what the fillings of $\cB'$ are if $\cB'$  is non-empty. Because $\cB'$ is above the filling $C$ by Lemma~\ref{lem:Ptableaux_rules} the only possibility is that it contains $\{B_1,B_2,\ldots, B_s\}$ with subscripts increasing as you go up in $T$. 

For part (d) assume that $B_s$ appears in column $c$ of $T$ and $B_{s+1}$ appears in column $c+1$ below $B_s$ in a lower row. We know for sure by Lemma~\ref{lem:Ptableaux_rules} that to the right of $B_s$ we have $B_t$ where $t\geq s+2$. The only way to have $B_t$ in column $c+1$ somewhere above $B_{s+1}$ is to have $t>s+1$ and for there to be all of the $B_i$, $i\in [s+1,t]$, between $B_{s+1}$ and $B_t$ with subscripts increasing as we go up in $T$. This pattern of  fillings from $\cB$ with increasing subscripts may continue beyond $B_t$ in consecutive cells until some highest $B_{s+L}$. Because we know the placement for all $B_i$ for $i\in[s,s+L]$ by Lemma~\ref{lem:Ptableaux_rules} we can conclude that all $B_i$ above $B_{s+L}$ in column $c+1$ have $i<s$ and any $B_i$ below $B_{s+1}$ in column $c+1$  have $i>s+L$.
\end{proof}

Now we will define our injection $f_{\lambda}:[2]\times \cT_{\lambda,m+1,n-1}\rightarrow [m+1]\times \cT_{\lambda,m,n}$. Let $T\in \cT_{\lambda, m+1,n-1}$ and $k\in [2]$. 

{\bf Case 1:} Say $T\in \cT_{\lambda, m,n}$ and $k\in[2]$. We map
$$f_{\lambda}(k,T)= (m-1+k,T).$$
See Figure~\ref{fig:cases1and2} for an example. 
This case is clearly well defined and injective. Additionally, note that  the first coordinate of the output is either $m$ or $m+1$. We will see in Case 2 and Case 3 of our map that the first coordinate of the output will be at most  $m-1$, so will not intersect  Case 1. 

{\bf Case 2:} Say that instead $T\notin \cT_{\lambda, m,n}$ and the first coordinate of our input is $k=1$. By 
Lemma~\ref{lem:Ptableaux_structure} (a) and (b) we know that $C$ must appear in the first column of $T$ directly above $A_j$ for some $j\in[m-1]$. Let $T'$ be $T$ but we switch $A_j$ and $A_m$. Because $A_j$ and $A_m$ have the same relations in $\cP_{m+1,n-1}$ certainly $T'\in \cT_{\lambda,m+1,n-1}$. Since  $C$ is now immediately above $A_m$  we can conclude by Lemma~\ref{lem:Ptableaux_structure} (b) that $T'\in \cT_{\lambda, m,n}$. We map
$$f_{\lambda}(1,T)= (j,T').$$
See Figure~\ref{fig:cases1and2} for an example. 
Note  in this case that the first coordinate of the output is at most $m-1$, which means outputs from Case 2 do not intersect with outputs from Case 1. Also note  in all outputs from this case that $C$ appears directly above $A_m$ in $T'$. This guarantees  that we are injective. We will show  in Case 3 that our outputs do not have $C$ immediately above $A_m$.

\begin{figure}

\begin{tikzpicture}
\begin{scope}[shift={(0,0)}]
\node at (-6,0) {$(2,$};
\node at (-3.5,0) {$)\mapsto(4,$};
\node at (-1.2,0) {$)$};
\node at (-5,0) {$
\begin{Young} 
$C$ & $B_2$ \cr
$A_3$ & $B_1$ \cr
$A_1$ & $B_3$ \cr
$A_2$ \cr
\end{Young}
$};
\node at (-2.1,0) {$
\begin{Young} 
$C$ & $B_2$ \cr
$A_3$ & $B_1$ \cr
$A_1$ & $B_3$ \cr
$A_2$ \cr
\end{Young}
$};
\end{scope}
\begin{scope}[shift={(7,0)}]
\node at (-6,0) {$(1,$};
\node at (-3.5,0) {$)\mapsto(2,$};
\node at (-1.2,0) {$)$};
\node at (-5,0) {$
\begin{Young} 
$C$ & $B_2$ \cr
$A_2$ & $B_1$ \cr
$A_1$ & $B_3$ \cr
$A_3$ \cr
\end{Young}
$};
\node at (-2.1,0) {$
\begin{Young} 
$C$ & $B_2$ \cr
$A_3$ & $B_1$ \cr
$A_1$ & $B_3$ \cr
$A_2$ \cr
\end{Young}
$};
\end{scope}
\end{tikzpicture}
\caption{We have two examples for $f_{\lambda}:[2]\times\cT_{\lambda,4,3}\rightarrow[4]\times\cT_{\lambda,3,4}$ where $\lambda = (2,2,2,1)$. On the left we have an example from Case 1 and on the right we have an example from Case 2. }
\label{fig:cases1and2}
\end{figure}

{\bf Case 3:} Say that we still have $T\notin \cT_{\lambda, m,n}$ but the first coordinate of our input is now $k = 2$. Again by
Lemma~\ref{lem:Ptableaux_structure}  (a) and (b) we know that $C$ is appears in the first column of $T$ directly above $A_j$ for some $j\in[m-1]$. This case will be more complicated, but we will be mapping 
$$f_{\lambda}(2,T)=(j,T'')$$
for some $P$-tableau $T''\in \cT_{\lambda,m,n}$ whose construction we describe next. 
We will describe the construction in steps: first what we will call an $\cA$-shift, then some column $\cB$-shifts. 

{\bf The $\cA$-shift:}
According to Lemma~\ref{lem:Ptableaux_structure} (c) we can decompose the first column of $T$ as in Figure~\ref{fig:def_ashift} where $\cA'$ and $\cA''$ are contiguous blocks of cells containing vertices from $\cA$ and $\cB'$ and $\cB''$ are contiguous blocks of cells containing vertices from $\cB$, any of which could possibly be empty.  We will shift the blocks of cells containing $\cB'$ and $C$ down below $A_j$ and $\cA''$ which we shift up as displayed in Figure~\ref{fig:def_ashift}. We will call this move the {\it $\cA$-shift} and the new tableau formed $T'$. 
We can see that column 1 in $T'$ satisfies the conditions necessary in order to be a $P$-tableau for $L_{m+1,n-1}$  by Lemma~\ref{lem:Ptableaux_structure} (c). 
However,  we are not guaranteed that $T'$ is a $P$-tableau for $L_{m+1,n-1}$ because of possible issues between columns 1 and  2. From Lemma~\ref{lem:Ptableaux_structure} (c) we know that $\cB'$, if nonempty, contains fillings $B_1, B_2, \ldots, B_s$ with subscripts increasing as we go up. So our $\cA$-shift moves around fillings from $\cA\cup \{C,B_1,B_2,\ldots, B_s\}$. Because all $B_1, B_2, \ldots, B_s$ are in column 1 we can conclude that all of $\cA\cup \{C,B_1,B_2,\ldots, B_s\}$ share the same relations with any possible filling from column 2 (except the relation between $B_s$ and $B_{s+1}$), so the $\cA$-shift preserves all properties we need  in order to be a $P$-tableau for $L_{m+1,n-1}$ except in the case where $B_s$ gets shifted down to be left of $B_{s+1}$. If that is not the case let $T'=T''$ and we are done. 

{\bf The column 2 $\cB$-shift:}
Now consider the unfortunate case where $B_s$ gets shifted down to be left of $B_{s+1}$. In this case we know in $T$ that $B_s$ is in column 1 row $r$ and $B_{s+1}$ is in column 2 below row $r$ in row $r+a$ where $a=\#(\cA''\cup\{A_j\})$. By  Lemma~\ref{lem:Ptableaux_structure} (d)  we can conclude that above and including $B_{s+1}$ in column 2 we have all of $B_{i}$ for $i\in[s+1,s+r+a]$ in a contiguous block of cells with subscripts increasing as we go up and $B_{s+r+a}$  in row 1. We will vertically cycle the block of cells containing $B_{i}$ for $i\in[s+1,s+r+a]$  so that $B_{s+1}$ is in row $r$ (immediately right of where $B_s$ was originally in $T$). Call this the {\it column 2 $\cB$-shift}. Using the last parts of Lemma~\ref{lem:Ptableaux_structure} (d)  we are guaranteed that the first two columns satisfying the conditions needed to be a $P$-tableau for $L_{m+1,n-1}$. If the resulting tableau happens to additionally be a $P$-tableau for $L_{m+1,n-1}$ then this $P$-tableau is our $T''$. 

{\bf The column 3 $\cB$-shift and further column $\cB$-shifts:}
Otherwise by similar reasons as before  to the right of $B_{s+r+a}$ after the column 2 $\cB$-shift we have $B_{s+r+a+1}$, which is in row $r+1$. Also similar to before, above and including $B_{s+r+a+1}$ we have all of $B_{i}$ for $i\in[s+r+a+1,s+2r+a+1]$ with subscripts increasing as we go up and $B_{s+2r+a+1}$ is in row 1. We will vertically cycle the block of cells containing $B_{i}$ for $i\in[s+r+a+1,s+2r+a+1]$ so that $B_{s+r+a+1}$ is in row $1$ (immediately right of where $B_{s+r+a}$ was originally in $T$). Call this the {\it column 3 $\cB$-shift}. We will continue doing these column $\cB$-shifts until we arrive at a $P$-tableau of $L_{m+1,n-1}$.   Note that it is straightforward to see that this will always  terminate successfully as follows. We can always do a $\cB$-shift unless some column $c$ only contains one cell, which is in the top row. Let this $c$ be minimal. In this case, the previous $\cB$-shift in column $c-1$ will have replaced the $B_i$ that was in its top row with some $B_{i'}$ where $i'<i$ and hence we will not need to perform a $\cB$-shift in column $c$. This $P$-tableau we created is $T''$. See Figure~\ref{fig:def_ashift} for an example. 

During our construction of $T''$ we have shown that $T''\in\cT_{\lambda,m+1,n-1}$. Because the filling immediately below $C$ in $T''$ is either non-existent or a filling from $\cB$ we know by Lemma~\ref{lem:Ptableaux_structure} (b) that $T''\in\cT_{\lambda,m,n}$, so our map is well defined. 
Recall that in this case we  mapped $f_{\lambda}(2,T)=(j,T'')$ where $j\leq m-1$, so we have in the output the filling $C$ in $T''$  not immediately above $A_m$. This means that Case 3 outputs do not intersect with those from Case 1 or Case 2.

\begin{figure}
\centering 
\begin{tikzpicture}[scale= 1]
\node at (0,0) {$$\tableau{\mathcal{A}' \\ \mathcal{B}' \\ C \\ A_j \\ \mathcal{A}'' \\ \mathcal{B}''}$$}; 
\node at (2,0) {$$\tableau{\mathcal{A}' \\ A_j \\ \mathcal{A}'' \\ \mathcal{B}' \\ C \\ \mathcal{B}''}$$};
\draw[thick, ->] (0.5,0) -- (1.5,0);
\begin{scope}[shift={(7,0)}]
\node at (-1.5,0) {$T=$};
\node at (0,0) {$
\begin{Young} 
\tiny$A_2$ & \tiny$B_6$ & \tiny$B_9$ & \tiny$B_{12}$\cr
\tiny$B_2$ & \tiny$B_5$& \tiny$B_8$ \cr
\tiny$B_1$ & \tiny$B_4$& \tiny$B_7$ \cr
\tiny$C$& \tiny$B_3$& \tiny$B_{11}$ \cr
\tiny$A_1$& \tiny$B_{10}$ \cr
\tiny$A_3$ \cr
\end{Young}
$};
\end{scope}
\begin{scope}[shift={(10.5,0)}]
\node at (-1.5,0) {$T''=$};
\node at (0,0) {$
\begin{Young} 
\tiny$A_2$ & \tiny$B_4$ & \tiny$B_7$ & \tiny$B_{12}$\cr
\tiny$A_1$ & \tiny$B_3$& \tiny$B_9$ \cr
\tiny$A_3$ & \tiny$B_6$& \tiny$B_8$ \cr
\tiny$B_2$& \tiny$B_5$& \tiny$B_{11}$ \cr
\tiny$B_1$& \tiny$B_{10}$ \cr
\tiny$C$ \cr
\end{Young}
$};
\end{scope}

\end{tikzpicture}
\caption{On the left we illustrate the $\cA$-shift in general  by drawing only the first column. On the right we display $T$ and $T''$ in an example for Case 3 in the map $f_{\lambda}:[2]\times \cT_{\lambda,4,12}\rightarrow[4]\times\cT_{\lambda,3,13}$ where $\lambda = (4,3,3,3,2,1)$ and $f_{\lambda}(2,T)=(1,T'')$.}
\label{fig:def_ashift}
\end{figure}

Lastly, we only have to argue why Case 3 is injective. Case 3 is injective because if $f_{\lambda}(2,T)=(j,T'')$ comes from Case 3 we can recover $T$ from $(j,T'')$. Note that because of the $\cA$-shift $T''$ has its first column as follows reading from top to bottom:  a contiguous block of cells containing all of $\cA$ in some order,  a contiguous block of cells $\cB'$ with fillings from $\cB$,  $C$ and finally another contiguous block of cells $\cB''$ with fillings from $\cB$.  Because $j$ is specified we can split the top contiguous block of cells containing everything from  $\cA$ into $\cA'$, those above $A_j$, and $\cA''$, those below $A_j$. This allows us to undo the $\cA$-shift. Let $\tilde{T}'$ be $T''$ with the $\cA$-shift undone. Because of how we defined the column 2 $\cB$-shift, we know we did a  column 2 $\cB$-shift if there is some $B_s$ in column 1 followed immediately to its  right by $B_{s+1}$. Note that in $\tilde{T}'$ the occurrence of a $B_s$ in column 1 followed immediately to its right by $B_{s+1}$ will only happen because of a $\cB$-shift. If we can identify the block of cells we cycled in the column 2 $\cB$-shift then we will be able to undo this shift. This block will contain the cell $B_{s+1}$, the cell below $B_{s+1}$ filled with some $B_{s +t}$, $t>1$, and all other cells containing $B_{i}$, $i\in [s+l,s+t]$ for some smallest $l\leq t$ with subscripts decreasing consecutively as we go down. The block of  cells continues above $B_{s +1}$ up until we reach row 1 with subscripts  increasing consecutively as we go up. Now that we have identified all cells in the block that we cycled in the column 2 $\cB$-shift we can undo the column 2 $\cB$-shift by cycling vertically until $B_{s+1}$ is at the bottom. We can similarly undo all other column $\cB$-shifts, each of which will be indicated by some $B_{i}$ in column $c$ with a $B_{i+1}$ to its immediate right. Since we have recovered $T$ from $(j,T'')$ we have proven that  Case 3 is injective and can further conclude that $f_{\lambda}$ is injective.

\begin{theorem}\label{the:lollipop}
The family of {lollipop} graphs $\{L_{m,n}\}$ on $m+n=N$ vertices forms a chain in  $\cS_N$. {In particular, for $m\geq 3$ and $n\geq 0$,
$$L_{m-1,n+1}\geq_s L_{m,n}.$$}
{Hence, t}he path $P_N$ is the maximal element and the complete graph $K_N$ is the minimal element of the chain.
\end{theorem}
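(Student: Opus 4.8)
The plan is to assemble the pieces developed in this section. By transitivity of $\geq_s$ it suffices to establish the single-step relations $L_{m-1,n+1}\geq_s L_{m,n}$ for all $m\geq 3$ and $n\geq 0$; reindexing, this is the same as $L_{m,n}\geq_s L_{m+1,n-1}$ for all $m\geq 2$ and $n\geq 1$. By definition of $\geq_s$ this means that
$$X_s(L_{m,n},L_{m+1,n-1})=X_{L_{m,n}}-\frac{[s_{(1^N)}]X_{L_{m,n}}}{[s_{(1^N)}]X_{L_{m+1,n-1}}}X_{L_{m+1,n-1}}$$
is Schur-positive, and by Proposition~\ref{prop:lollipop_coeff_schur} the scaling factor equals $2^{n}m!/(2^{n-1}(m+1)!)=2/(m+1)$. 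So I must show $(m+1)\,[s_\lambda]X_{L_{m,n}}\geq 2\,[s_\lambda]X_{L_{m+1,n-1}}$ for every $\lambda\vdash N$.

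First I would invoke Gasharov's interpretation: each lollipop is the incomparability graph of the $(3+1)$-free poset $\cP_{m,n}$, so $[s_\lambda]X_{L_{m,n}}=\#\cT_{\lambda,m,n}$ and $[s_\lambda]X_{L_{m+1,n-1}}=\#\cT_{\lambda,m+1,n-1}$; hence the desired inequality is exactly $\#\bigl([m+1]\times\cT_{\lambda,m,n}\bigr)\geq\#\bigl([2]\times\cT_{\lambda,m+1,n-1}\bigr)$. This is witnessed by the map $f_\lambda$ constructed above: Cases 1, 2 and 3 are each individually injective, and the running analysis of the first coordinate of the output (it lies in $\{m,m+1\}$ in Case 1; it is at most $m-1$ with $C$ directly above $A_m$ in Case 2; it is at most $m-1$ with $C$ not directly above $A_m$ in Case 3) shows the three cases have pairwise disjoint images, so $f_\lambda$ is injective. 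Therefore the coefficient inequality holds, $X_s(L_{m,n},L_{m+1,n-1})$ is Schur-positive, and $L_{m,n}\geq_s L_{m+1,n-1}$.

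It then remains to read off the global structure. Every lollipop on $N$ vertices is $L_{m,n}$ for some $m+n=N$; note $L_{1,N-1}$ and $L_{2,N-2}$ are both $P_N$ and $L_{N,0}=K_N$, so the single-step relations chain together as $P_N=L_{2,N-2}>_s L_{3,N-3}>_s\cdots>_s L_{N,0}=K_N$, exhibiting all lollipops on $N$ vertices as a chain with maximal element $P_N$ and minimal element $K_N$. The relations are strict because $[s_{(1^N)}]X_{L_{m,n}}=2^{N-m}m!$ is strictly increasing in $m$ for $m\geq 2$ (consecutive ratio $(m+1)/2>1$), so the lollipops are pairwise inequivalent in $\cG_N$. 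The only substantive obstacle is the construction and verification of $f_\lambda$ — this is where the structure Lemma~\ref{lem:Ptableaux_structure} and the $\cA$- and $\cB$-shifts do the real work — and that has already been handled above; given it, the theorem reduces to the routine bookkeeping just described.
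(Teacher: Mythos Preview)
Your proposal is correct and follows exactly the approach of the paper: Section~\ref{sec:schur_lollipop} builds the injection $f_\lambda$ via Cases 1--3 (using Lemma~\ref{lem:Ptableaux_structure} and the $\cA$- and $\cB$-shifts), and the theorem is then the wrap-up you describe---reduce to the single-step inequality, compute the scaling factor from Proposition~\ref{prop:lollipop_coeff_schur}, apply Gasharov's interpretation, and invoke the injection. Your added remark that the $s_{(1^N)}$-coefficients $2^{N-m}m!$ are strictly increasing in $m\geq 2$ is a clean way to confirm the lollipops are pairwise distinct in $\cG_N$, a point the paper leaves implicit.
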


\section{Further  directions}
\label{sec:futher_directions}

One motivation behind setting up these posets was to study $e$-positivity and Schur-positivity of chromatic symmetric functions. More is known about Schur-positivity of chromatic symmetric functions since Gasharov~\cite{Gash} has proven that incomparability graphs of $(3+1)$-free posets are Schur-positive, but it has yet to be proven that the same class of graphs are all $e$-positive. Guay-Paquet~\cite{GP} has reduced the question to only needing to show that incomparability graphs of $(2+2)$ and $(3+1)$-free posets are $e$-positive, a family of graphs better known as unit interval graphs. Since we have proven that $G$ is $e$-positive if and only if $G\geq_e K_n$ in $\cE_n$ in Theorem~\ref{thm:G>K_n} the poset $\cE_n$ gives us another approach to $e$-positivity. We can now show $G$ is $e$-positive by finding and proving a sequence of inequalities $G\geq_e G_1\geq_e G_2\geq_e \cdots \geq_e G_l\geq_e K_n$. For example the inequalities proven in Theorem~\ref{thm:lollipop_e-chain} prove that lollipop graphs are $e$-positive, although, lollipops have been proven to be $e$-positive previously by other methods~\cite{DvW,GebSag}. 

Below we conjecture many inequalities between connected unit interval graphs in $\cE_n$  and in $\cS_n$. These inequalities can be placed in series to show any connected unit interval graph is $e$-positive and Schur-positive, though Gasharov~\cite{Gash} has proven all unit interval graphs are Schur-positive. First we need a description of unit interval graphs.  There are many equivalent definitions for unit interval graphs with some equivalences proven in~\cite{E18}. Here we will describe  {\it unit interval graphs}  on vertices in $[n]$  using a weakly-increasing sequence ${\bf m}=(m_1,m_2,\ldots ,m_{n-1})$ where $i\leq m_i\leq n$ for all $i\in[n-1]$. The graph will have an edge between $a$ and $b$ whenever $a,b\in [i,m_i]$ for some $i$.

\begin{conjecture}
Let $G$ be a connected unit interval graph defined by the weakly-increasing sequence ${\bf m}=(m_1,m_2,\ldots ,m_{n-1})$, $i\leq m_i\leq n$ for all $i\in[n-1]$. 
Let $G'$ be the  unit interval graph defined by the sequence ${\bf m}'=(m_1+1,\ldots, m_r+1,m_{r+1},\ldots ,m_{n-1})$ where $m_r<m_{r+1}$ is the first increase.
Then,
$$G\geq_e G' \text{ and } G\geq_s G'.$$
\label{conj}
\end{conjecture}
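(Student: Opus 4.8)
Write $m_1=m_2=\dots=m_r=:M$, so that $M<m_{r+1}$. Directly from $\mathbf m$ one checks that $[M]=\{1,\dots,M\}$ is a clique of $G$, that $M+1$ has $N_G(M+1)\cap[M]=\{r+1,\dots,M\}$, and that $E(G')=E(G)\sqcup F$ where $F=\{\,\{a,M+1\}:a\in[r]\,\}$; in particular $[M+1]$ is a clique of $G'$, and $G$ and $G'$ induce the \emph{same} unit interval graph $\Gamma$ on $\{M+1,\dots,n\}$, which has $n-M<n$ vertices. Since reading the vertices $n,n-1,\dots,1$ is a perfect elimination ordering of a connected unit interval graph, $\chi_G(k)=k\prod_{a=1}^{n-1}\bigl(k-(m_a-a)\bigr)$ with each $m_a-a\ge1$, and likewise for $G'$ with $m_a$ replaced by $m_a+1$ in the first $r$ factors. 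Feeding this into \eqref{eq:e_n_coeff} and \eqref{eq:s_1^n_coeff} gives
\[
\frac{[e_{(n)}]X_G}{[e_{(n)}]X_{G'}}=\frac{M-r}{M},\qquad
\frac{[s_{(1^n)}]X_G}{[s_{(1^n)}]X_{G'}}=\frac{M-r+1}{M+1},
\]
so that $X_e(G,G')=X_G-\tfrac{M-r}{M}X_{G'}$ and $X_s(G,G')=X_G-\tfrac{M-r+1}{M+1}X_{G'}$.

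\textbf{The key identity.} The plan is to prove the purely algebraic identity
\[
M\,X_G=(M-r)\,X_{G'}+r\,X_{K_M}X_{\Gamma},
\]
which I will call $(\star)$; it exhibits $X_G$ as a convex combination of $X_{G'}$ and $X_{K_M\sqcup\Gamma}$. When $r=M-1$ (the constant block is all of $\mathbf m$ except its last entry) $(\star)$ is exactly the lollipop recursion of Theorem~\ref{thm:lollipop}, so $(\star)$ is the natural common generalization of that recursion. Granting $(\star)$, the displayed scalars give $X_e(G,G')=\tfrac{r}{M}X_{K_M}X_{\Gamma}$. I would then run one strong induction on $n$ proving simultaneously that every connected unit interval graph on $n$ vertices is $e$-positive and that the $e$-part of the conjecture holds on $n$ vertices: from $(\star)$, $X_{K_M}$ being $e$-positive (Example~\ref{ex:complete_graph}), each component of $\Gamma$ being a connected unit interval graph on fewer than $n$ vertices (hence $e$-positive by induction), and $e$-positivity being multiplicative (Remark~\ref{re:disjoint}), we get $G\ge_e G'$; iterating, $G\ge_e G^{(1)}\ge_e\dots\ge_e K_n$, so $G\ge_e K_n$ and $G$ is $e$-positive by Theorem~\ref{thm:G>K_n}. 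In particular $(\star)$ would settle the Stanley--Stembridge conjecture in the unit-interval form to which Guay-Paquet~\cite{GP} reduced it, so I expect establishing $(\star)$ to be the main obstacle by a wide margin, and would not expect it to be easy.

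\textbf{Attacking $(\star)$.} The first route is the power-sum expansion of Theorem~\ref{thm:X_Gpowersum}: since $E(G')=E(G)\sqcup F$ and $E(K_M\sqcup\Gamma)=\binom{[M]}{2}\sqcup E(\Gamma)$ is precisely $E(G)$ with the ``crossing'' edges joining $\{r+1,\dots,M\}$ to $\{M+1,\dots,n\}$ deleted, one can expand all three terms of $(\star)$ over spanning subgraphs and regroup according to how the chosen edge set meets $\binom{[M]}{2}\cup F\cup(\text{crossing edges})$; the weights $\tfrac{M-r}{M}$ and $\tfrac{r}{M}$ suggest organizing the sum by which of the $M$ vertices of $[M]$ lies in the connected piece of $M+1$. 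The second route is to obtain $(\star)$ from $r$ successive applications of a local triple-edge relation for chromatic symmetric functions (the kind underlying the deletion argument behind Theorem~\ref{thm:lollipop}, cf.~\cite{Orellana,DvW}), peeling off the edges $\{1,M+1\},\dots,\{r,M+1\}$ one at a time and checking that the intermediate terms telescope, with the lollipop case $r=M-1$ as template.

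\textbf{The Schur statement.} Because $X_s(G,G')$ carries the scalar $\tfrac{M-r+1}{M+1}$ rather than $\tfrac{M-r}{M}$, substituting $(\star)$ only yields $X_s(G,G')=\tfrac{r}{M}\bigl(X_{K_M}X_{\Gamma}-\tfrac{1}{M+1}X_{G'}\bigr)$, which is not visibly Schur-positive, so the Schur part needs its own argument along the lines of Section~\ref{sec:schur_lollipop}. Since $G$ and $G'$ are incomparability graphs of unit interval orders, Gasharov~\cite{Gasharov} gives $[s_\lambda]X_G=\#\cT_{\lambda,G}$ and $[s_\lambda]X_{G'}=\#\cT_{\lambda,G'}$, and it suffices to build an injection $[M-r+1]\times\cT_{\lambda,G'}\hookrightarrow[M+1]\times\cT_{\lambda,G}$. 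The clique vertices $1,\dots,r$ of $G'$ (having no neighbour outside $[M+1]$) should play the role of $\cA$ and the vertex $M+1$ the role of $C$ from that section; the structure lemma (Lemma~\ref{lem:Ptableaux_structure}) and the $\cA$-shift followed by iterated column $\cB$-shifts should generalize, with the case $M-r+1=2$ recovering Theorem~\ref{the:lollipop}. Here the obstacle is bookkeeping rather than conceptual: one must make the structure lemma and the termination of the chain of $\cB$-shifts go through when the block $\cA$ of first-column clique cells can have size $r>1$.
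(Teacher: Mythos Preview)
The statement you are attempting is a \emph{conjecture} in the paper, not a theorem; the paper offers no proof and only reports a computer check through $n=7$, together with the remark that a proof would establish the Stanley--Stembridge conjecture for unit interval graphs. So there is no ``paper's own proof'' to compare against, and what you have written is a research outline rather than a proof.

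More seriously, your key identity $(\star)$ is false in general. Take $n=5$ and $\mathbf m=(2,4,4,5)$, so $M=2$, $r=1$, $G'$ has $\mathbf m'=(3,4,4,5)$, and $\Gamma$ is the path on $\{3,4,5\}$. Then $\chi_G(k)=k(k-1)^3(k-2)$, $\chi_{G'}(k)=k(k-1)^2(k-2)^2$, and $\chi_{K_2}(k)\chi_\Gamma(k)=k^2(k-1)^3$; dividing $(\star)$ by $k(k-1)^2$ would force $2(k-1)(k-2)=(k-2)^2+k(k-1)$, i.e.\ $2k^2-6k+4=2k^2-5k+4$, which fails. The reason is structural: in the lollipop case there is exactly one ``crossing'' edge between $[M]$ and $\{M+1,\dots,n\}$, and the deletion/triple-edge telescoping you allude to balances perfectly; for a general unit interval graph the number of crossing edges joining $\{r+1,\dots,M\}$ to $\{M+1,\dots,n\}$ can exceed one (here the edges $\{2,3\}$ and $\{2,4\}$), and the bookkeeping does not close up into a two-term convex combination. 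So the $e$-part of your plan collapses at its central step, not merely at a hard step you would ``expect to be the main obstacle''. The Schur part is only a sketch of an injection to be constructed; nothing there is wrong, but nothing is proved either.
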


Using deletion-contraction we can compute the chromatic polynomial for all unit interval graphs and the coefficients $[e_{(n)}]X_G$ and $[s_{(1^n)}]X_G$.
\begin{proposition}
Let $G$ be a  unit interval graph defined by the weakly-increasing sequence ${\bf m}=(m_1,m_2,\ldots ,m_{n-1})$, $i\leq m_i\leq n$ for all $i\in[n-1]$. Then, 
$$\chi_G(k)=k\prod_{i=1}^{n-1}(k-(m_i-i)).$$
Also, 
$$[e_{(n)}]X_G=n\prod_{i=1}^{n-1}(m_i-i) \text{ and }[s_{(1^n)}]X_G=\prod_{i=1}^{n-1}(m_i-i+1).$$
\end{proposition}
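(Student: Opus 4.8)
The plan is to prove the chromatic polynomial formula by induction on $n$ using the fact that vertex $1$ is simplicial, and then read off the two coefficient formulas from equations~\eqref{eq:e_n_coeff} and~\eqref{eq:s_1^n_coeff}.

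\textbf{Step 1: the structure of vertex $1$.} In the unit interval graph $G$ on $[n]$ with sequence $\mathbf m=(m_1,\ldots,m_{n-1})$, the only interval $[i,m_i]$ containing $1$ is $[1,m_1]$, so $N(1)=\{2,3,\ldots,m_1\}$ and $|N(1)|=m_1-1$. This neighborhood is a clique: for $2\le a<b\le m_1$ we have $m_a\ge m_1\ge b$ since $\mathbf m$ is weakly increasing, so $a,b\in[a,m_a]$ and $ab\in E(G)$. Hence vertex $1$ is simplicial. Deleting vertex $1$ and relabeling $j\mapsto j-1$ produces exactly the unit interval graph $G'$ on $[n-1]$ with sequence $\mathbf m'=(m_2-1,m_3-1,\ldots,m_{n-1}-1)$, which is again weakly increasing and satisfies $i\le m'_i\le n-1$ (one checks $m_{i+1}\ge i+1$ gives $m'_i\ge i$, and $m_{i+1}\le n$ gives $m'_i\le n-1$). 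I stress that connectedness of $G$ is never used, matching the hypothesis of the proposition.

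\textbf{Step 2: induction.} Since vertex $1$ is simplicial, in every proper coloring of $G'$ the clique $N(1)$ uses $m_1-1$ distinct colors, so each such coloring extends to vertex $1$ in exactly $k-(m_1-1)$ ways; summing over colorings of $G'$ gives $\chi_G(k)=(k-(m_1-1))\,\chi_{G'}(k)$ (equivalently, one may iterate deletion--contraction from~\eqref{eq:deletion-contraction}). The base case $n=1$ is $\chi_G(k)=k$. Applying the inductive hypothesis to $G'$ and using $m'_i-i=(m_{i+1}-1)-i=m_{i+1}-(i+1)$,
$$\chi_G(k)=(k-(m_1-1))\cdot k\prod_{i=1}^{n-2}\bigl(k-(m'_i-i)\bigr)=k\,(k-(m_1-1))\prod_{i=2}^{n-1}\bigl(k-(m_i-i)\bigr)=k\prod_{i=1}^{n-1}\bigl(k-(m_i-i)\bigr),$$
where the last equality holds because the $i=1$ factor of the product is precisely $k-(m_1-1)$.

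\textbf{Step 3: the coefficients.} Write $c_i=m_i-i\ge 0$. By equation~\eqref{eq:e_n_coeff}, $[e_{(n)}]X_G=(-1)^{n-1}n\cdot[k]\chi_G(k)$; since $[k]\chi_G(k)=\prod_{i=1}^{n-1}(-c_i)=(-1)^{n-1}\prod_{i=1}^{n-1}c_i$, we obtain $[e_{(n)}]X_G=n\prod_{i=1}^{n-1}(m_i-i)$. By equation~\eqref{eq:s_1^n_coeff}, $[s_{(1^n)}]X_G=(-1)^n\chi_G(-1)$, and $\chi_G(-1)=(-1)\prod_{i=1}^{n-1}(-1-c_i)=(-1)^n\prod_{i=1}^{n-1}(c_i+1)$, so $[s_{(1^n)}]X_G=\prod_{i=1}^{n-1}(m_i-i+1)$.

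There is no serious obstacle here; the only care needed is the index bookkeeping in Step 2 — keeping the leading factor $k$ (coming from the last vertex deleted) separate from the $i=1$ factor $k-(m_1-1)$ of the product, and tracking the shift $i\mapsto i+1$ under deletion of vertex $1$. The one genuine idea is the observation in Step 1 that vertex $1$ is simplicial with neighborhood of size exactly $m_1-1$, which makes the recursion collapse to a single linear factor.
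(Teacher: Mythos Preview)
Your proof is correct and follows essentially the same route as the paper's: both argue by induction on $n$ via vertex $1$, obtain the recursion $\chi_G(k)=(k-(m_1-1))\,\chi_{G''}(k)$ with $G''$ the unit interval graph on sequence $(m_2-1,\ldots,m_{n-1}-1)$, and then read off the two coefficients from equations~\eqref{eq:e_n_coeff} and~\eqref{eq:s_1^n_coeff}. The only difference is packaging: the paper reaches the recursion by iterating deletion--contraction over all edges at vertex $1$ and then noting that the resulting graph is $G''$ plus an isolated vertex, whereas you observe directly that vertex $1$ is simplicial with clique neighborhood of size $m_1-1$, which yields the same factor in one step and is a bit cleaner conceptually.
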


\begin{proof}
Let $G$ be a  unit interval graph defined by the weakly-increasing sequence ${\bf m}=(m_1,m_2,\ldots ,m_{n-1})$, $i\leq m_i\leq n$ for all $i\in[n-1]$. The formula for the chromatic polynomial will follow from deletion-contraction and induction. Our base case is when $n=1$ and ${\bf m}$ is an empty list. In this case $\chi_{G}=k$, which matches the formula. Using deletion-contraction repeatedly on all edges connected to vertex 1 we get 
$$\chi_G=\chi_{G'}-(m_1-1)\chi_{G''},$$
where $G'$ and $G''$ are the graphs associated to ${\bf m}'=(1,m_2,\ldots ,m_{n-1})$ and ${\bf m}''=(m_2-1,m_3-1,\ldots ,m_{n-1}-1)$, respectively. Note that $G'$ is $G''$, but with an additional disjoint vertex. This means $\chi_{G'}=k\chi_{G''}$. Then, 
$$\chi_G=\chi_{G''}(k-(m_1-1)),$$
so by induction we have our formula.

By equation~\eqref{eq:e_n_coeff} we get that $[e_{(n)}]X_G=n\prod_{i=1}^{n-1}(m_i-i)$.
By equation~\eqref{eq:s_1^n_coeff} we get that $[s_{(1^n)}]X_G=\prod_{i=1}^{n-1}(m_i-i+1)$. 
\end{proof}

\begin{remark}Since we can form a sequence of connected unit interval graphs from any connected unit interval graph to the  complete graph, as described in Conjecture~\ref{conj},  proving Conjecture~\ref{conj} would imply that all unit interval graphs are $e$-positive. The conjecture has been confirmed up until $n=7$.
\end{remark}

\end{document}